\newtheorem{theorem}{Theorem}
\newtheorem{corollary}{Corollary}
\newtheorem{lemma}{Lemma}
\newtheorem{proposition}{Proposition}
\newtheorem{remark}{Remark}
\newenvironment{proof}[1][Proof]{\noindent\textbf{#1.} }{\ \rule{0.5em}{0.5em}}
\begin{document}

\title{On Freud-Sobolev type orthogonal polynomials}
\author{Luis E. Garza$^{1}$, Edmundo J. Huertas$^{2,\dagger}$, and Francisco
Marcellán$^{3}$\thanks{%
Part of this research was conducted while L.E. Garza was visiting E.J. Huertas at the Universidad de Alcal\'{a} in early 2017, under the ``GINER DE LOS RIOS'' research program. Both authors wish to thank the Departamento de F\'{i}sica y Matem\'{a}ticas de la Universidad de Alcal\'{a} for its support. The work of the three authors was partially supported by Direcci\'{o}n General de Investigaci\'{o}n Cient\'{i}fica y T\'{e}cnica, Ministerio de Econom\'{i}a y Competitividad of Spain, under grant MTM2015-65888-C4-2-P. ($\dagger$) Corresponding author.} \\
%EndAName
\\
$^{1}$Facultad de Ciencias, Universidad de Colima,\\
Bernal Díaz del Castillo, No. 340, C.P. 28045 Colima, M\'exico.\\
luis\textunderscore garza1@ucol.mx, garzaleg@gmail.com\\
\\
$^{2}$Departamento de Ingeniería Civil: Hidráulica y Ordenación del
Territorio,\\
E.T.S. de Ingeniería Civil, Universidad Politécnica de Madrid,\\
C/ Alfonso XII, 3 y 5, 28014 Madrid, Spain.\\
ej.huertas.cejudo@upm.es, ehuertasce@gmail.com\\
\\
$^{3}$Instituto de Ciencias Matemáticas (ICMAT) and Departamento de
Matem\'aticas,\\
Universidad Carlos III de Madrid, Avenida de la Universidad 30, 28911, Legané%
s, Spain\\
pacomarc@ing.uc3m.es}
\date{\emph{(\today)}}
\maketitle

\begin{abstract}
In this contribution we deal with sequences of monic polynomials orthogonal
with respect to the Freud Sobolev-type inner product%
\begin{equation*}
\left\langle p,q\right\rangle _{s}=\int_{\mathbb{R}%
}p(x)q(x)e^{-x^{4}}dx+M_{0}p(0)q(0)+M_{1}p^{\prime }(0)q^{\prime }(0),
\end{equation*}%
where $p,q$ are polynomials, $M_{0}$ and $M_{1}$ are nonnegative real
numbers. Connection formulas between these polynomials and Freud polynomials
are deduced and, as a consequence, a five term recurrence relation for such
polynomials is obtained. The location of their zeros as well as their
asymptotic behavior is studied. Finally, an electrostatic interpretation of
them in terms of a logarithmic interaction in the presence of an external
field is given.

\smallskip

\textbf{AMS Subject Classification:} 33C45, 33C47

\textbf{Key Words and Phrases:} Orthogonal polynomials, Exponential weights
Freud-Sobolev type orthogonal polynomials, Zeros, Interlacing, Electrostatic
interpretation.
\end{abstract}

%%%%%%%%%%%%%%%%%%%%%%%%%%%%%%%%%%%%%%%%%%%%%%%%%%%%%%%%%%%%%%%%%%%%%%%%%%%%%%%%%%%%%%%%%%%%%%%%%%

%%%%%%%%%%%%%%%%%%%%%%%%%%%%%%%%%%%%%%%%%%%%%%%%%%%%%%%%%%%%%%%%%%%%%%%%%%%%%%%%%%%%%%%%%%%%%%%%%%

\section{Introduction}

\label{[Sec-1]-Intro}

%%%%%%%%%%%%%%%%%%%%%%%%%%%%%%%%%%%%%%%%%%%%%%%%%%%%%%%%%%%%%%%%%%%%%%%%%%%%%%%%%%%%%%%%%%%%%%%%%%

%%%%%%%%%%%%%%%%%%%%%%%%%%%%%%%%%%%%%%%%%%%%%%%%%%%%%%%%%%%%%%%%%%%%%%%%%%%%%%%%%%%%%%%%%%%%%%%%%%

Let us consider the so called Freud type inner products%
\begin{equation}
\left\langle p,q\right\rangle =\int_{\mathbb{R}}p(x)q(x)d\mu (x),\quad
p,q\in \mathbb{P},  \label{GeneralInnProd}
\end{equation}%
where $d\mu (x)=\omega (x)dx=e^{-V(x)}dx$ is a positive, nontrivial Borel
measure supported on the whole real line $\mathbb{R}$, and $\mathbb{P}$ is
the linear space \ of polynomials with real coefficients. Analytic
properties of such sequences of polynomials are very well known for certain
values of the external potential $V(x)$.

Let us introduce the following inner product in $\mathbb{P}$

\begin{equation}
\langle p,q\rangle =\int_{\mathbb{R}}p(x)q(x)e^{-x^{4}}dx,\quad p,q\in 
\mathbb{P},  \label{Fb-InnerProduct}
\end{equation}%
i.e., $V(x)=x^{4}$ in (\ref{GeneralInnProd}).

Let $\{F_{n}(x)\}_{n\geq 0}$ be the\ corresponding sequence of monic
orthogonal polynomials (MOPS, in short), which constitute a family of
semi--classical orthogonal polynomials (see \cite{M-ACAM91}, \cite{S-DMJ39}%
), because $V(x)$ is differentiable in $\mathbb{R}$\ (the support of $\mu $%
), and the linear functional $u$\ associated with $\omega (x)=e^{-V(x)}$,
i.e.%
\begin{equation*}
\langle u,p(x)\rangle =\int_{\mathbb{R}}p(x)\omega (x)dx,
\end{equation*}%
satisfies the distributional (or Pearson) equation (see \cite{V-WS07})%
\begin{equation*}
\lbrack \sigma (x)\omega (x)]^{\prime }=\tau (x)\omega (x),
\end{equation*}%
where $\sigma (x)=1$ and $\tau (x)=-4x^{3}$. Notice that, in terms of the
weight function, the above relation means that%
\begin{equation*}
\frac{\omega ^{\prime }(x)}{\omega (x)}=\frac{\tau (x)-\sigma ^{\prime }(x)}{%
\sigma (x)}=-V^{\prime }(x).
\end{equation*}

In this contribution, we consider the diagonal Freud Sobolev-type inner
product%
\begin{equation}
\left\langle p,q\right\rangle _{s}=\left\langle p,q\right\rangle +\mathbf{p}%
^{T}(0)\mathbf{Mq}(0),  \label{SobFbInnProd}
\end{equation}%
where%
\begin{equation*}
\mathbf{p}(0)=[p(0),p^{\prime }(0),\ldots ,p^{(s)}(0)]^{T}
\end{equation*}%
is a column vector of dimension $s+1$, the column vector $\mathbf{q}(0)$ is
defined in an analogous way, and $\mathbf{M}$ is the diagonal and positive
definite $(s+1)\times (s+1)$ matrix%
\begin{equation*}
\mathbf{M}=diag\,[M_{0},M_{1},\ldots ,M_{s}],\quad M_{k}\in \mathbb{R}_{+},
k=0,1,\cdots, s\text{.}
\end{equation*}%
Thus (\ref{SobFbInnProd}) reads%
\begin{equation}
\left\langle p,q\right\rangle _{s}=\left\langle p,q\right\rangle
+\sum_{k=0}^{s}M_{k}p^{(k)}(0)q^{(k)}(0).  \label{SobFbInnProd_diag}
\end{equation}

We will denote by $\{Q_{n}(x)\}_{n\geq 0}$ the MOPS with respect to the
above inner product. This is the so called \textit{diagonal case} for
Sobolev-type inner products, see \cite{AMRR-JAT15}. If there are no
derivatives involved therein (i.e., $s=0$), the polynomials orthogonal with
respect to (\ref{SobFbInnProd_diag})\ are known as \textit{Krall--type
orthogonal polynomials}, and they are orthogonal with respect to a standard
inner product, because the operator of multiplication by $x$ is symmetric
with respect to such an inner product, i.e. $\langle xp,q\rangle
_{s=0}=\langle p,xq\rangle _{s=0}$, for every $p,q\in \mathbb{P}$. On the
other hand, when $s>0$ (\ref{SobFbInnProd}) becomes non--standard, and the
corresponding polynomials are called \textit{Sobolev--type orthogonal
polynomials}. In this work we consider the Sobolev case, so we will refer $%
Q_{n}(x)$ as \textit{Freud--Sobolev type} orthogonal polynomials.

We will also use a notation relative to the norm of the polynomials. If for
any $n$-th degree polynomial of a sequence of orthogonal polynomials we have 
$\langle f_{n},f_{n}\rangle =||f_{n}||^{2}=1$, then the sequence is said to
be \textit{orthonormal}. In order to have uniqueness, we will always choose
the leading coefficient of any orthonormal polynomial to be positive for
every $n$.

\bigskip

\begin{proposition}
\label{[Sec1]-PROP1}Let $\{f_{n}(x)\}_{n\geq 0}$ denote the sequence of
polynomials orthonormal with respect to (\ref{Fb-InnerProduct}). That is,%
\begin{equation*}
f_{n}(x)=\gamma _{n}F_{n}(x)=\gamma _{n}x^{n}+\text{\textit{lower degree
terms}},
\end{equation*}%
where%
\begin{equation}
\gamma _{n}=\left( ||F_{n}||^{2}\right) ^{-1/2}>0,  \label{[Sec1]-leadcoefp}
\end{equation}%
and%
\begin{equation*}
||F_{n}||^{2}=\int_{\mathbb{R}}[F_{n}(x)]^{2}e^{-x^{4}}dx.
\end{equation*}%
The following structural properties hold.

\begin{enumerate}
\item Three term recurrence relation (see \cite{N-CMS83}). Since $\omega (x)$%
\ is an even weight function, the family $\{f_{n}(x)\}_{n\geq 0}$ is
symmetric. For every $n\in \mathbb{N}$,%
\begin{equation}
xf_{n-1}(x)=a_{n}f_{n}(x)+a_{n-1}f_{n-2}(x),\quad n\geq 1,  \label{3TRR}
\end{equation}

with $f_{-1}:=0$, $f_{0}(x)=(\int_{\mathbb{R}}\omega (x)dx)^{-1/2}$, $%
\,f_{1}(x)=a_{1}^{-1}x$. Also, $a_{n}=\frac{\gamma _{n-1}}{\gamma _{n}},
n\geq1$, $a_{0}=0$, and 
\begin{equation*}
a_{1}^{2}=\frac{\int_{\mathbb{R}}x^{2}\omega (x)dx}{\int_{\mathbb{R}}\omega
(x)dx}=\frac{\Gamma \left( \frac{3}{4}\right) }{\Gamma \left( \frac{1}{4}%
\right) }.
\end{equation*}%
We also have (see \cite{FVZ-JPA12})%
\begin{equation}
xF_{n}(x)=F_{n+1}(x)+a_{n}^{2}F_{n-1}(x),\quad n\geq 1,  \label{3TRR-Monic}
\end{equation}%
for the monic normalization.

\item Ratio of the leading coefficients (see \cite{LQ-JAT83}, \cite{N-SJMA84}%
)%
%\begin{equation}
%a_{n}=\frac{\gamma _{n-1}}{\gamma _{n}},  \label{[Sec1]-rat}
%\end{equation}%
%with%
\begin{equation}
a_{n}^{2}=\left( \frac{n}{12}\right) ^{\frac{1}{2}}\left[ 1+\frac{1}{24n^{2}}%
+\mathcal{O}(n^{-4})\right]  \label{[Sec1]-LewQuarles}
\end{equation}

\item String equation (see \cite[(2.12)]{V-WS07}). An important feature of
these polynomials is that the recurrence coefficients $a_{n}$\ in the above
three term recurrence relation, satisfy the following nonlinear difference
equation%
\begin{equation*}
4a_{n}^{2}\left( a_{n+1}^{2}+a_{n}^{2}+a_{n-1}^{2}\right) =n,n\geq 1.
\end{equation*}%
This is known in the literature as the \textit{string equation} or \textit{%
Freud equation }(see \cite{FVZ-JPA12}, \cite[(3.2.20)]{Ism05}, among others)

\item (\cite[Th. 4]{N-CMS83}).The polynomials $f_{n}(x)$ defined by (\ref%
{3TRR}) constitute a generalized Appell sequence. More precisely,%
\begin{equation*}
\lbrack f_{n}]^{\prime }(x)=\frac{n}{a_{n}}%
f_{n-1}(x)+4a_{n}a_{n-1}a_{n-2}f_{n-3}(x),\quad n=1,2,\ldots
\end{equation*}

\item (\cite[Th. 5]{N-CMS83}). The polynomials $f_{n}(x)$ satisfy%
\begin{equation}
\lbrack f_{n}]^{\prime }(x)=-4xa_{n}^{2}f_{n}(x)+4a_{n}\phi
_{n}(x)f_{n-1}(x),  \label{fprimDer}
\end{equation}%
and%
\begin{eqnarray}
\lbrack f_{n}]^{\prime \prime }(x)
&=&[16a_{n}^{4}x^{2}-4a_{n}^{2}-16a_{n}^{2}\phi _{n}(x)\phi
_{n-1}(x)]f_{n}(x)  \label{fsegDer} \\
&&+[8a_{n}x+16a_{n}x^{3}\phi _{n}(x)]f_{n-1}(x),  \notag
\end{eqnarray}%
where (see \cite[eq. (14)]{N-CMS83})%
\begin{equation*}
\phi _{n}(x)=a_{n+1}^{2}+a_{n}^{2}+x^{2}.
\end{equation*}

\item Strong inner asymptotics (see \cite[Th. 1]{N-CMS83}, and \cite[eq. (8)]%
{N-SJMA84}). Let $\{f_{n}\}_{n\geq 0}$ be the orthonormal polynomials with
respect to the weight function for $\omega (x)=e^{-x^{4}}$. Then,%
\begin{equation*}
f_{n}\left( x\right) =Ae^{x^{4}/2}n^{-1/8}\times 
\end{equation*}%
\begin{equation*}
\sin \left\{ \left( \frac{64}{27}\right)
^{1/4}n^{3/4}x+12^{-1/4}n^{1/4}x^{3}-\frac{n-1}{2}\pi \right\} +o(n^{-1/8}),
\end{equation*}%
where $A=\sqrt[8]{12}/\sqrt{\pi }$, uniformly for $x$ in every compact
subset $\Delta \subset \mathbb{R}$.
\end{enumerate}
\end{proposition}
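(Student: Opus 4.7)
The plan is to derive items 1, 3, 4, and 5 from two structural features of the Freud weight $\omega(x)=e^{-x^{4}}$, namely its evenness and the Pearson equation $\omega'/\omega=-4x^{3}$; items 2 and 6 are deep analytic facts for which I would refer to the cited literature.

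Item 1 is almost immediate: the evenness of $\omega$ forces the $F_{n}$ to have parity $(-1)^{n}$, so the diagonal coefficient of the three-term recurrence vanishes and we obtain the symmetric form. The value $a_{1}^{2}=\Gamma(3/4)/\Gamma(1/4)$ follows from the moment identity $\int_{\mathbb{R}}x^{2k}e^{-x^{4}}\,dx=\tfrac{1}{2}\Gamma((2k+1)/4)$, which one obtains via the substitution $u=x^{4}$.

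The heart of the argument is the simultaneous derivation of items 3, 4, and 5. I would expand $f_{n}'=\sum_{k=0}^{n-1}c_{n,k}f_{k}$ and evaluate the Fourier coefficients by integration by parts, obtaining $c_{n,k}=-\langle f_{n},f_{k}'\rangle+4\langle x^{3}f_{n},f_{k}\rangle$, where the Pearson equation supplies the factor $-4x^{3}$. The first term vanishes by orthogonality, and three applications of (\ref{3TRR}) expand $x^{3}f_{n}$ as a combination of $f_{n\pm 1}$ and $f_{n\pm 3}$, so only $k\in\{n-1,n-3\}$ survive, yielding the closed expressions $c_{n,n-3}=4a_{n}a_{n-1}a_{n-2}$ and $c_{n,n-1}=4a_{n}(a_{n+1}^{2}+a_{n}^{2}+a_{n-1}^{2})$. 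On the other hand, matching the leading coefficients in $f_{n}'=c_{n,n-1}f_{n-1}+c_{n,n-3}f_{n-3}$ independently gives $c_{n,n-1}=n\gamma_{n}/\gamma_{n-1}=n/a_{n}$. Equating the two formulas for $c_{n,n-1}$ delivers item 3 (the string equation) and item 4 at one stroke. For item 5, I would use (\ref{3TRR}) iteratively to rewrite $f_{n-3}$ as a polynomial combination of $\{f_{n},f_{n-1}\}$; substituting into item 4 and invoking the string equation to simplify the bracket $n/(4a_{n}^{2})+x^{2}-a_{n-1}^{2}$ to $\phi_{n}(x)$ gives (\ref{fprimDer}). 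Differentiating (\ref{fprimDer}) once more and reducing $f_{n-2}$ via (\ref{3TRR}) produces (\ref{fsegDer}).

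The main obstacle is item 6, together with the finer part of item 2. Item 2 can be motivated by inserting the ansatz $a_{n}^{2}\sim n^{1/2}/\sqrt{12}\,(1+\alpha n^{-2}+\cdots)$ into the string equation and matching orders, but justifying this expansion rigorously and ruling out oscillatory corrections is the content of \cite{LQ-JAT83,N-SJMA84}. Item 6 is the genuinely transcendental statement of the proposition: proving the Plancherel--Rotach type asymptotics requires the ladder-operator and WKB machinery of \cite{N-CMS83,N-SJMA84}, or alternatively the Riemann--Hilbert steepest-descent analysis for Freud-type weights, neither of which admits a short derivation from the algebraic relations above.
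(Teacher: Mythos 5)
The paper offers no proof of this proposition at all: it is presented as a compilation of known structural facts, each item being attributed to the literature (\cite{N-CMS83}, \cite{N-SJMA84}, \cite{LQ-JAT83}, \cite{V-WS07}, \cite{FVZ-JPA12}). Your proposal therefore goes strictly beyond the paper by actually deriving items 1, 3, 4 and 5 from scratch, and the derivations are correct. The integration-by-parts computation $c_{n,k}=\langle f_n',f_k\rangle=-\langle f_n,f_k'\rangle+4\langle x^3f_n,f_k\rangle$ is exactly Nevai's ladder argument: the first term dies by orthogonality for $k<n$, the triple application of (\ref{3TRR}) leaves only $c_{n,n-3}=4a_na_{n-1}a_{n-2}$ and $c_{n,n-1}=4a_n(a_{n+1}^2+a_n^2+a_{n-1}^2)$, and comparing with the leading-coefficient value $c_{n,n-1}=n/a_n$ yields the string equation and the generalized Appell relation simultaneously. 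The elimination of $f_{n-3}$ via $a_{n-2}f_{n-3}=\frac{x^2-a_{n-1}^2}{a_{n-1}}f_{n-1}-\frac{a_nx}{a_{n-1}}f_n$ followed by the string-equation simplification $\frac{n}{4a_n^2}+x^2-a_{n-1}^2=\phi_n(x)$ correctly produces (\ref{fprimDer}), and one further differentiation together with the reduction of $f_{n-2}$ reproduces (\ref{fsegDer}); I verified the coefficient of $f_{n-1}$ collapses to $8a_nx+16a_nx^3\phi_n(x)$ as claimed. The moment identity $\int_{\mathbb{R}}x^{2k}e^{-x^4}dx=\tfrac12\Gamma\bigl(\tfrac{2k+1}{4}\bigr)$ gives $a_1^2$ correctly. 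Your decision to defer item 2 (beyond the leading order obtainable from the string-equation ansatz) and item 6 to \cite{LQ-JAT83}, \cite{N-SJMA84} and \cite{N-CMS83} is exactly what the paper itself does, and is unavoidable at this level: those are genuinely analytic results. In short, where the paper cites, you prove; the trade-off is length versus self-containedness, and your sketch is a faithful reconstruction of the arguments in the cited sources rather than a divergence from them.
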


%%%%%%%%%%%%%%%%%%%%%%%%%%%%%%%%%%%%%%%%%%%%%%%%%%%%%%%%%%%%%%%%%%%%%%%%%%%%%%%%%%%%%%%%%%%%%%%%%%

%%%%%%%%%%%%%%%%%%%%%%%%%%%%%%%%%%%%%%%%%%%%%%%%%%%%%%%%%%%%%%%%%%%%%%%%%%%%%%%%%%%%%%%%%%%%%%%%%%

We are interested in the asymptotic properties of derivatives of the Freud
polynomials which will be useful in the sequel. 
%In \cite{N-CMS83} the author
%provides the following asymptotic behavior of $f_{n}(0)$\ and $%
%[f_{n}]^{\prime }(0)$%
%\begin{eqnarray}
%f_{n}(0) &=&\left\{
%\begin{array}{ll}
%0 & \text{if }n\text{\ is odd} \\
%(-1)^{n/2}n^{-1/8}(A+o(1)) & \text{if }n\text{\ is even}%
%\end{array}%
%\right. ,  \label{fn(0)} \\
%\lbrack f_{n}]^{\prime }(0) &=&\left\{
%\begin{array}{ll}
%(-1)^{(n-1)/2}\frac{\sqrt{8}}{\sqrt[4]{27}}n^{5/8}(A+o(1)) & \text{if }n%
%%\text{\ is odd} \\
%0 & \text{if }n\text{\ is even}%
%\end{array}%
%\right. ,  \label{fnDer(0)}
%\end{eqnarray}%
% $A=\sqrt[8]{12}/\sqrt{\pi }$. Notice that as direct consequences of (\ref%
%{[Sec1]-LewQuarles}), (\ref{phin}), (\ref{fn(0)}), and (\ref{fnDer(0)}) we deduce
%the asymptotic estimates%
%\begin{equation}
%\phi _{n}(0)=\frac{1}{\sqrt{3}}n^{1/2}\left( 1+\frac{1}{4n}-\frac{1}{48n^{2}}%
%+\mathcal{O}(n^{-3})\right) ,  \label{phi0asympn}
%\end{equation}%
%\begin{equation*}
%-4a_{n}^{2}[1+4\phi _{n}(0)\phi _{n-1}(0)]=\frac{-8}{3\sqrt{3}}n^{3/2}\left(
%1+\frac{3}{4n}-\frac{1}{16n^{2}}+\mathcal{O}(n^{-3})\right) ,
%\end{equation*}%
%\begin{equation}
%a_{n}=\frac{\gamma _{n-1}}{\gamma _{n}}=\frac{n^{1/4}}{3^{1/4}\sqrt{2}}%
%\left( 1+\frac{1}{48n^{2}}+O(n^{-3})\right) .  \label{an-ratiocoef}
%\end{equation}%
From \cite[eqs. (16)-(17)]{N-CMS83}, and (\ref{fsegDer}), the following
Lemma follows

%%%%%%%%%%%%%%%%%%%%%%%%%%%%%%%%%%%%%%%%%%%%%%%%%%%%%%%%%%%%%%%%%%%%%%%%%%%%%%%%%%%%%%%%%%%%%%%%%%

%%%%%%%%%%%%%%%%%%%%%%%%%%%%%%%%%%%%%%%%%%%%%%%%%%%%%%%%%%%%%%%%%%%%%%%%%%%%%%%%%%%%%%%%%%%%%%%%%%

\begin{lemma}
\label{[Sec2]-LEMMA1}(see \cite[Th. 6]{N-CMS83}) There exists a constant $A=%
\sqrt[8]{12}/\sqrt{\pi }$ \ such that the following estimates hold%
\begin{eqnarray*}
f_{n}(0) &=&\left\{ 
\begin{array}{ll}
0 & \text{if }n\text{\ is odd} \\ 
(-1)^{n/2}n^{-1/8}(A+o(1)) & \text{if }n\text{\ is even}%
\end{array}%
\right. , \\
\lbrack f_{n}]^{\prime }(0) &=&\left\{ 
\begin{array}{ll}
(-1)^{(n-1)/2}\frac{\sqrt{8}}{\sqrt[4]{27}}n^{5/8}(A+o(1)) & \text{if }n%
\text{\ is odd} \\ 
0 & \text{if }n\text{\ is even}%
\end{array}%
\right. , \\
\lbrack f_{n}]^{\prime \prime }(0) &=&\left\{ 
\begin{array}{ll}
0 & \text{if }n\text{\ is odd} \\ 
\left( -1\right) ^{\frac{n}{2}+1}\frac{8}{3\sqrt{3}}n^{11/8}(A+o(1)) & \text{%
if }n\text{\ is even}%
\end{array}%
\right. , \\
\lbrack f_{n}]^{\prime \prime \prime }(0) &=&\left\{ 
\begin{array}{ll}
(-1)^{(n-1)/2}\frac{16\sqrt{2}}{9\sqrt[4]{3}}n^{17/8}(A+o(1)) & \text{if }n%
\text{\ is odd} \\ 
0 & \text{if }n\text{\ is even}%
\end{array}%
\right. .
\end{eqnarray*}
\end{lemma}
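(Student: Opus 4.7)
The plan is to combine the even symmetry of the Freud weight, the structural identities (\ref{fprimDer}) and (\ref{fsegDer}), and the Lew--Quarles estimate (\ref{[Sec1]-LewQuarles}) with the strong asymptotic given in item~6 of Proposition~\ref{[Sec1]-PROP1}.

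Half of the conclusions are automatic from symmetry: since $\omega(x)=e^{-x^{4}}$ is even, each $f_{n}$ inherits the parity of $n$, so $f_{n}(0)=[f_{n}]''(0)=0$ for odd $n$ and $[f_{n}]'(0)=[f_{n}]'''(0)=0$ for even $n$, disposing of four cases. For the non-trivial value $f_{n}(0)$ (even $n$) one specialises item~6 of Proposition~\ref{[Sec1]-PROP1} at $x=0$: the sine collapses to $\sin(-(n-1)\pi/2)=(-1)^{n/2}$, producing the stated asymptotic. For $[f_{n}]'(0)$ (odd $n$) I would evaluate (\ref{fprimDer}) at $x=0$, reducing it to
\[
[f_{n}]'(0)=4a_{n}\bigl(a_{n+1}^{2}+a_{n}^{2}\bigr)f_{n-1}(0),
\]
and then substitute (\ref{[Sec1]-LewQuarles}) together with the $f_{n-1}(0)$ asymptotic just derived.

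For $[f_{n}]''(0)$ (even $n$), evaluating (\ref{fsegDer}) at $x=0$ annihilates the bracket multiplying $f_{n-1}$ and leaves
\[
[f_{n}]''(0)=-4a_{n}^{2}\bigl[1+4\phi_{n}(0)\phi_{n-1}(0)\bigr]f_{n}(0).
\]
Substituting $\phi_{n}(0)\phi_{n-1}(0)=(a_{n+1}^{2}+a_{n}^{2})(a_{n}^{2}+a_{n-1}^{2})\sim n/3$ via (\ref{[Sec1]-LewQuarles}) together with $a_{n}^{2}\sim(n/12)^{1/2}$ extracts the dominant factor $-\tfrac{8}{3\sqrt{3}}\,n^{3/2}$, and multiplying by the $f_{n}(0)$ asymptotic gives the order $n^{11/8}$ with sign $(-1)^{n/2+1}$. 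For $[f_{n}]'''(0)$ (odd $n$) I would differentiate (\ref{fsegDer}) once more; the identity $\phi_{n}'(0)=0$ (hence $(\phi_{n}\phi_{n-1})'(0)=0$) wipes out the derivative of the bracket multiplying $f_{n}$, and the bracket multiplying $f_{n-1}$ vanishes at the origin, so that only two terms survive:
\[
[f_{n}]'''(0)=-4a_{n}^{2}\bigl[1+4\phi_{n}(0)\phi_{n-1}(0)\bigr][f_{n}]'(0)+8a_{n}f_{n-1}(0).
\]
Comparing orders ($n^{17/8}$ against $n^{1/8}$) singles out the first summand as dominant, and inserting the asymptotic of $[f_{n}]'(0)$ obtained in the previous step yields the prefactor $16\sqrt{2}/(9\sqrt[4]{3})$.

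The main obstacle is not conceptual but algebraic: combining nested fractional powers of $12$, $8$, $27$ and $3$ coming from (\ref{[Sec1]-LewQuarles}), $\phi_{n}(0)$, $\phi_{n-1}(0)$, and the sine/cosine values at $-(n-1)\pi/2$ into the compact surd form used in the statement requires careful bookkeeping. No tool beyond the symmetry of $\omega$, the identities (\ref{fprimDer})--(\ref{fsegDer}), item~6 of Proposition~\ref{[Sec1]-PROP1}, and the Lew--Quarles estimate (\ref{[Sec1]-LewQuarles}) enters the argument.
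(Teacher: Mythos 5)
Your overall strategy is sound, and it is worth noting that the paper itself supplies no proof of this Lemma: it simply cites Nevai \cite[Th.\ 6]{N-CMS83} and remarks that the statement ``follows from'' Nevai's eqs.\ (16)--(17) together with (\ref{fsegDer}). So your derivation is a genuine self-contained argument rather than a rephrasing of the paper's. The four vanishing cases by parity are immediate; evaluating item~6 of Proposition~\ref{[Sec1]-PROP1} at $x=0$ (legitimate, since the error is uniform on compacts) gives $f_{n}(0)$; and the exact identities (\ref{fprimDer})--(\ref{fsegDer}) at $x=0$, combined with (\ref{[Sec1]-LewQuarles}), do reproduce the orders and constants: $4a_{n}\phi_{n}(0)\sim(64/27)^{1/4}n^{3/4}$ gives $\sqrt{8}/\sqrt[4]{27}$, and $4a_{n}^{2}[1+4\phi_{n}(0)\phi_{n-1}(0)]\sim(64/27)^{1/2}n^{3/2}$ gives $8/(3\sqrt{3})$; your identification of the dominant term in the third-derivative formula ($n^{17/8}$ versus $n^{1/8}$) is also correct. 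A pleasant feature of your route is that you never differentiate the asymptotic formula of item~6 term by term (which would not be justified); derivatives enter only through exact structure relations.

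There is, however, one concrete point you must address: the sign in the $[f_{n}]'''(0)$ case. Your own dominant term is $-4a_{n}^{2}[1+4\phi_{n}(0)\phi_{n-1}(0)]\,[f_{n}]'(0)$, and since the bracketed factor is positive and $[f_{n}]'(0)$ carries the sign $(-1)^{(n-1)/2}$, the product has sign $(-1)^{(n-1)/2+1}=(-1)^{(n+1)/2}$, not the $(-1)^{(n-1)/2}$ printed in the Lemma. The same conclusion follows heuristically from three formal derivatives of the sine in item~6, since $\tfrac{d^{3}}{d\theta^{3}}\sin\theta=-\cos\theta$. As written, your proposal asserts that the computation ``yields the prefactor $16\sqrt{2}/(9\sqrt[4]{3})$'' without confronting the fact that it yields the opposite sign to the one stated; you should either exhibit the compensating minus sign (there does not appear to be one) or explicitly flag the stated sign as a typo. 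Only the order $\mathcal{O}(n^{17/8})$ is used later in the paper (via Lemma~\ref{[Sec2]-LEMMA2}), so the discrepancy is harmless downstream, but a proof cannot silently pass over it.
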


%%%%%%%%%%%%%%%%%%%%%%%%%%%%%%%%%%%%%%%%%%%%%%%%%%%%%%%%%%%%%%%%%%%%%%%%%%%%%%%%%%%%%%%%%%%%%%%%%%

%%%%%%%%%%%%%%%%%%%%%%%%%%%%%%%%%%%%%%%%%%%%%%%%%%%%%%%%%%%%%%%%%%%%%%%%%%%%%%%%%%%%%%%%%%%%%%%%%%

The kernel polynomials associated with the polynomial sequence $%
\{F_{n}\}_{n\geq 0}$ will play a key role in order to prove some of the
results of the manuscript. In the remaining of this section, we analyze them
in detail. The $n$-th degree reproducing kernel associated with $\left\{
F_{n}\right\} _{n\geqslant 0}$ is%
\begin{equation*}
K_{n}(x,y)=\sum\limits_{k=0}^{n}\frac{F_{k}(x)F_{k}(y)}{||F_{k}||^{2}}.
\end{equation*}%
For $x\neq y$, the Christoffel-Darboux formula reads%
\begin{equation}
K_{n}(x,y)=\frac{1}{||F_{n}||^{2}}\frac{F_{n+1}(x)F_{n}(y)-F_{n+1}(y)F_{n}(x)%
}{x-y},  \label{CristDar}
\end{equation}%
and its confluent expression becomes%
\begin{equation}
K_{n}(x,x)=\sum_{k=0}^{n}\frac{[F_{k}(x)]^{2}}{||F_{k}||^{2}}=\frac{%
[F_{n+1}]^{\prime }(x)F_{n}(x)-[F_{n}]^{\prime }(x)F_{n+1}(x)}{||F_{n}||^{2}}%
.  \label{[Sec1]-Knxx-P}
\end{equation}%
We introduce the following standard notation for the partial derivatives of
the $n$-th degree kernel $K_{n}(x,y)$%
\begin{equation}
\frac{\partial ^{j+k}K_{n}\left( x,y\right) }{\partial ^{j}x\partial ^{k}y}%
=:K_{n}^{(j,k)}\left( x,y\right) ,\quad 0\leq j,k\leq n.  \label{KDerivij}
\end{equation}%
Thus,%
\begin{equation}
K_{n-1}^{(0,1)}(x,0)=K_{n-1}^{(1,0)}(0,x)=\frac{1}{||F_{n-1}||^{2}}\cdot
\label{[Sec1]-Knxy01der-P}
\end{equation}%
\begin{equation}
\left[ \frac{F_{n}(x)F_{n-1}(0)-F_{n-1}(x)F_{n}(0)}{x^{2}}+\frac{%
F_{n}(x)[F_{n-1}]^{\prime }(0)-F_{n-1}(x)[F_{n}]^{\prime }(0)}{x}\right] , 
\notag
\end{equation}%
and, considering the coefficient of $x$ in the above expression, we have%
\begin{equation*}
K_{n-1}^{(1,1)}(0,0)=\frac{1}{||F_{n-1}||^{2}}\cdot
\end{equation*}%
\begin{equation*}
\left[ \frac{\lbrack F_{n}]^{\prime \prime \prime
}(0)F_{n-1}(0)-[F_{n-1}]^{\prime \prime \prime }(0)F_{n}(0)}{6}+\frac{%
[F_{n}]^{\prime \prime }(0)[F_{n-1}]^{\prime }(0)-[F_{n-1}]^{\prime \prime
}(0)[F_{n}]^{\prime }(0)}{2}\right] .
\end{equation*}%
From (\ref{[Sec1]-Knxx-P})%
\begin{equation}
K_{n-1}(0,0)=\frac{[F_{n}]^{\prime }(0)F_{n-1}(0)-[F_{n-1}]^{\prime
}(0)F_{n}(0)}{||F_{n-1}||^{2}},  \label{[Sec2]-K00cc}
\end{equation}%
and taking limit in (\ref{[Sec1]-Knxy01der-P}) when $x\rightarrow 0$, we get 
\begin{equation*}
K_{n-1}^{(0,1)}(0,0)=K_{n-1}^{(1,0)}(0,0)=\frac{1}{||F_{n-1}||^{2}}\frac{%
[F_{n}]^{\prime \prime }(0)F_{n-1}(0)-[F_{n-1}]^{\prime \prime }(0)F_{n}(0)}{%
2}.
\end{equation*}%
%
%
%
%
%
%
%
%
%
%
%
%
%
%
%
%
%
%
%
%
%
%
%
%
%
%
%
%
%
%
%From the symmetry of the polynomials, we have%
%\begin{equation}
%\left\{
%\begin{array}{rcl}
%F_{n}(0)=[F_{n}]^{\prime \prime }(0)=0, &  & \text{if }n\text{\ is odd,} \\
%\lbrack F_{n}]^{\prime }(0)=[F_{n}]^{\prime \prime \prime }(0)=0, &  & \text{%
%if }n\text{\ is even.}%
%\end{array}%
%\right.  \label{F(0)Nulos}
%\end{equation}%
Taking a suitable index shifting in the last three expressions, we conclude%
\begin{eqnarray*}
K_{2n-1}(0,0) &=&\frac{-[F_{2n-1}]^{\prime }(0)F_{2n}(0)}{||F_{2n-1}||^{2}},
\\
K_{2n-1}^{(0,1)}(0,0) &=&K_{2n-1}^{(1,0)}(0,0)=0, \\
K_{2n-1}^{(1,1)}(0,0) &=&\frac{1}{||F_{2n-1}||^{2}}\left[ \frac{%
[F_{2n}]^{\prime \prime }(0)[F_{2n-1}]^{\prime }(0)}{2}-\frac{%
[F_{2n-1}]^{\prime \prime \prime }(0)F_{2n}(0)}{6}\right]
\end{eqnarray*}%
as well as 
\begin{eqnarray*}
K_{2n}(0,0) &=&\frac{[F_{2n+1}]^{\prime }(0)F_{2n}(0)}{||F_{2n}||^{2}}, \\
K_{2n}^{(0,1)}(0,0) &=&K_{2n}^{(1,0)}(0,0)=0, \\
K_{2n}^{(1,1)}(0,0) &=&\frac{1}{||F_{2n}||^{2}}\left[ \frac{%
[F_{2n+1}]^{\prime \prime \prime }(0)F_{2n}(0)}{6}-\frac{[F_{2n}]^{\prime
\prime }(0)[F_{2n+1}]^{\prime }(0)}{2}\right] .
\end{eqnarray*}

%%%%%%%%%%%%%%%%%%%%%%%%%%%%%%%%%%%%%%%%%%%%%%%%%%%%%%%%%%%%%%%%%%%%%%%%%%%%%%%%%%%%%%%%%%%%%%%%%%
%%%%%%%%%%%%%%%%%%%%%%%%%%%%%%%%%%%%%%%%%%%%%%%%%%%%%%%%%%%%%%%%%%%%%%%%%%%%%%%%%%%%%%%%%%%%%%%%%%

%%%%%%%%%%%%%%%%%%%%%%%%%%%%%%%%%%%%%%%%%%%%%%%%%%%%%%%%%%%%%%%%%%%%%%%%%%%%%%%%%%%%%%%%%%%%%%%%%%
%%%%%%%%%%%%%%%%%%%%%%%%%%%%%%%%%%%%%%%%%%%%%%%%%%%%%%%%%%%%%%%%%%%%%%%%%%%%%%%%%%%%%%%%%%%%%%%%%%

Another interesting property of the Freud kernels arises from the symmetry
of $\{F_{n}(x)\}_{n\geq 0}$. From (\ref{[Sec1]-Knxx-P}) and (\ref{KDerivij})
we have%
\begin{eqnarray*}
K_{2n+1}(x,0) &=&\sum_{i=0}^{n-1}{\frac{F_{2i}(0)}{\left\Vert {F_{2i}}%
\right\Vert ^{2}}}\,F_{2i}(x)=K_{2n}(x,0), \\
K_{2n}^{(0,1)}(x,0) &=&K_{2n-1}^{(0,1)}(x,0), \\
K_{2n}^{(1,1)}(x,0) &=&{\frac{[F_{2n}]^{\prime }(x)[F_{2n}]^{\prime }(0)}{%
\left\Vert {F_{2n}}\right\Vert ^{2}}}%
+K_{2n-1}^{(1,1)}(x,0)=K_{2n-1}^{(1,1)}(x,0),
\end{eqnarray*}%
This fact will be useful throughout the paper. We also need explicit
asymptotic expressions for the reproducing kernel and their derivatives.
They are introduced in the following lemma. 
%%%%%%%%%%%%%%%%%%%%%%%%%%%%%%%%%%%%%%%%%%%%%%%%%%%%%%%%%%%%%%%%%%%%%%%%%%%%%%%%%%%%%%%%%%%%%%%%%%

%%%%%%%%%%%%%%%%%%%%%%%%%%%%%%%%%%%%%%%%%%%%%%%%%%%%%%%%%%%%%%%%%%%%%%%%%%%%%%%%%%%%%%%%%%%%%%%%%%

\begin{lemma}
\label{[Sec2]-LEMMA2}For every $n=0,1,\ldots $, we have%
\begin{eqnarray*}
K_{n}(0,0) &=&\mathcal{O}(n^{3/4}), \\
K_{n}^{(0,1)}(0,0) &=&K_{n-1}^{(1,0)}(0,0)=0, \\
K_{n}^{(1,1)}(0,0) &=&\mathcal{O}(n^{9/4}).
\end{eqnarray*}
\end{lemma}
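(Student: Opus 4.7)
The cleanest route is to rewrite the reproducing kernel directly in terms of the orthonormal polynomials $\{f_k\}_{k\ge 0}$, rather than using the monic/value formulas displayed just above the statement. Since $f_k(x)=F_k(x)/\Vert F_k\Vert$, the reproducing property gives
\begin{equation*}
K_n(x,y)=\sum_{k=0}^{n}f_k(x)f_k(y),
\end{equation*}
and consequently, differentiating under the sum and evaluating at $x=y=0$,
\begin{equation*}
K_n(0,0)=\sum_{k=0}^{n}[f_k(0)]^{2},\quad K_n^{(1,0)}(0,0)=\sum_{k=0}^{n}[f_k]^{\prime }(0)f_k(0),\quad K_n^{(1,1)}(0,0)=\sum_{k=0}^{n}\bigl([f_k]^{\prime }(0)\bigr)^{2}.
\end{equation*}
The symmetry identity $K_n^{(1,0)}(0,0)=K_n^{(0,1)}(0,0)$ is immediate from $K_n(x,y)=K_n(y,x)$.

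The vanishing of $K_n^{(0,1)}(0,0)$ is then a parity statement: by Lemma~\ref{[Sec2]-LEMMA1}, for every $k$ exactly one of $f_k(0)$ and $[f_k]^{\prime }(0)$ is zero (the first when $k$ is odd, the second when $k$ is even), so every summand in $\sum_k[f_k]^{\prime }(0)f_k(0)$ vanishes identically.

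For the remaining two estimates I would plug the asymptotics of Lemma~\ref{[Sec2]-LEMMA1} into the sums. Only even indices contribute to $K_n(0,0)$, giving
\begin{equation*}
K_n(0,0)=\sum_{\substack{k\le n\\ k\text{ even}}}k^{-1/4}(A+o(1))^{2},
\end{equation*}
and only odd indices contribute to $K_n^{(1,1)}(0,0)$, giving
\begin{equation*}
K_n^{(1,1)}(0,0)=\sum_{\substack{k\le n\\ k\text{ odd}}}\frac{8}{\sqrt{27}}\,k^{5/4}(A+o(1))^{2}.
\end{equation*}
A routine comparison with the corresponding Riemann integrals $\int_0^{n}t^{-1/4}\,dt=\tfrac{4}{3}n^{3/4}$ and $\int_0^{n}t^{5/4}\,dt=\tfrac{4}{9}n^{9/4}$ yields the desired $\mathcal{O}(n^{3/4})$ and $\mathcal{O}(n^{9/4})$ bounds. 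The only delicate point is that Lemma~\ref{[Sec2]-LEMMA1} gives $(A+o(1))$ termwise, not a uniform bound, so one should split the sum at a slowly growing index $N$, control the tail $\sum_{k>N}$ with a uniform bound $|f_k(0)|\le C k^{-1/8}$ (resp.\ $|[f_k]^{\prime}(0)|\le C k^{5/8}$) inherited from the same reference, and absorb the finite head into a constant. This is the only mildly technical step; once it is in place, the three estimates of Lemma~\ref{[Sec2]-LEMMA2} drop out immediately.
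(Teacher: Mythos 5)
Your proposal is correct and follows exactly the route the paper intends: its entire proof is the one-line remark that the lemma follows upon writing $K_{n}(0,0)$, $K_{n}^{(0,1)}(0,0)$ and $K_{n}^{(1,1)}(0,0)$ in terms of the orthonormal polynomials $f_{k}$, which is precisely your decomposition into $\sum_k f_k(0)^2$, $\sum_k f_k(0)[f_k]'(0)$ and $\sum_k([f_k]'(0))^2$ followed by the parity observation and the summation of the asymptotics from Lemma~\ref{[Sec2]-LEMMA1}. You have simply supplied the details (including the uniformity caveat) that the paper omits.
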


%%%%%%%%%%%%%%%%%%%%%%%%%%%%%%%%%%%%%%%%%%%%%%%%%%%%%%%%%%%%%%%%%%%%%%%%%%%%%%%%%%%%%%%%%%%%%%%%%%

%%%%%%%%%%%%%%%%%%%%%%%%%%%%%%%%%%%%%%%%%%%%%%%%%%%%%%%%%%%%%%%%%%%%%%%%%%%%%%%%%%%%%%%%%%%%%%%%%%

\begin{proof}
Writing $K_{n}(0,0)$, $K_{n}^{(0,1)}(0,0)$ and $K_{n}^{(1,1)}(0,0)$ in terms
of orthonormal polynomials $f_{n}$,%
% from (\ref{an-ratiocoef}), (\ref{fn(0)})
%and (\ref{fnDer(0)})
the Lemma follows.
\end{proof}

The structure of the manuscript is as follows.

In Section 2 we will obtain connection formulas between monic Freud-Sobolev
type and monic Freud orthogonal polynomials. We also prove that
Freud-Sobolev orthogonal polynomials satisfy a five term recurrence relation
and we will deduce the asymptotic behavior of the coefficients involved
therein. In Section 3 we study some analytic properties of zeros of
Freud-Sobolev type orthogonal polynomials, in particular interlacing and
asymptotic behavior. Section 4 is focused on the second order linear
differential equation that such polynomials satisfy. As a direct consequence,
the electrostatic interpretation of such polynomials in terms of a
logarithmic potential interaction and an external potential is presented.

%%%%%%%%%%%%%%%%%%%%%%%%%%%%%%%%%%%%%%%%%%%%%%%%%%%%%%%%%%%%%%%%%%%%%%%%%%%%%%%%%%%%%%%%%%%%%%%%%%

%%%%%%%%%%%%%%%%%%%%%%%%%%%%%%%%%%%%%%%%%%%%%%%%%%%%%%%%%%%%%%%%%%%%%%%%%%%%%%%%%%%%%%%%%%%%%%%%%%

\section{Connection formulas}

\label{[Sec-2]-ConnForm}

%%%%%%%%%%%%%%%%%%%%%%%%%%%%%%%%%%%%%%%%%%%%%%%%%%%%%%%%%%%%%%%%%%%%%%%%%%%%%%%%%%%%%%%%%%%%%%%%%%

%%%%%%%%%%%%%%%%%%%%%%%%%%%%%%%%%%%%%%%%%%%%%%%%%%%%%%%%%%%%%%%%%%%%%%%%%%%%%%%%%%%%%%%%%%%%%%%%%%

Let us consider the aforementioned Sobolev-type inner product (\ref%
{SobFbInnProd_diag}). In the sequel, we will denote by $\{Q_{n}(x)\}_{n\geq
0}$\ the corresponding sequence of monic orthogonal polynomials and by%
\begin{equation*}
||Q_{n}||_{s}^{2}=\langle Q_{n},x^{n}\rangle _{s}
\end{equation*}%
the norm of the $n$-th degree polynomial. The connection formula between $%
\{Q_{n}(x)\}_{n\geq 0}$ and $\{P_{n}(x)\}_{n\geq 0}$ is stated in the
following lemma.

\begin{lemma}
\cite{AMRR-JAT15} For $n\geq 1$, we have 
\begin{equation}
Q_{n}(x)=F_{n}\left( x\right)
-\sum_{k=0}^{s}M_{k}[Q_{n}]^{(k)}(0)K_{n-1}^{(0,k)}(x,0),  \label{expan-2}
\end{equation}%
where, for $0\leq k\leq s$,%
\begin{equation*}
\lbrack Q_{n}]^{(k)}(0)=\left( \det D\right) ^{-1}%
\begin{vmatrix}
1+M_{0}K_{n-1}^{(0,0)}(0,0) & \cdots & F_{n}(0) & \cdots & 
M_{s}K_{n-1}^{(0,s)}(0,0) \\ 
M_{0}K_{n-1}^{(1,0)}(0,0) & \cdots & [F_{n}]^{\prime }(0) & \cdots & 
M_{s}K_{n-1}^{(1,s)}(0,0) \\ 
\vdots &  & \vdots & \ddots & \vdots \\ 
M_{0}K_{n-1}^{(s,0)}(0,0) & \cdots & [F_{n}]^{(s)}(0) & \cdots & 
1+M_{s}K_{n-1}^{(s,s)}(0,0)%
\end{vmatrix}%
,
\end{equation*}%
with%
\begin{equation*}
D=%
\begin{bmatrix}
1+M_{0}K_{n-1}^{(0,0)}(0,0) & M_{1}K_{n-1}^{(0,1)}(0,0) & \cdots & 
M_{s}K_{n-1}^{(0,s)}(0,0) \\ 
M_{0}K_{n-1}^{(1,0)}(0,0) & 1+M_{1}K_{n-1}^{(1,1)}(0,0) & \cdots & 
M_{s}K_{n-1}^{(1,s)}(0,0) \\ 
\vdots & \vdots & \ddots & \vdots \\ 
M_{0}K_{n-1}^{(s,0)}(0,0) & M_{1}K_{n-1}^{(s,1)}(0,0) & \cdots & 
1+M_{s}K_{n-1}^{(s,s)}(0,0)%
\end{bmatrix}%
.
\end{equation*}
\end{lemma}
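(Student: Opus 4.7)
The plan is to expand $Q_n$ in the orthonormal-type basis $\{F_k\}_{k=0}^{n}$ relative to the \emph{standard} Freud inner product and extract the Fourier coefficients from the Sobolev orthogonality conditions. Since $Q_n$ is monic of degree $n$, I would write
\begin{equation*}
Q_n(x)=F_n(x)+\sum_{k=0}^{n-1}\frac{\langle Q_n,F_k\rangle}{\|F_k\|^2}\,F_k(x).
\end{equation*}
For each $k<n$, the orthogonality $\langle Q_n,F_k\rangle_s=0$ together with the definition (\ref{SobFbInnProd_diag}) yields
\begin{equation*}
\langle Q_n,F_k\rangle=-\sum_{j=0}^{s}M_j[Q_n]^{(j)}(0)[F_k]^{(j)}(0).
\end{equation*}

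Substituting this back, interchanging the order of summation, and recognizing the inner sum as a partial derivative of the reproducing kernel gives the desired connection formula:
\begin{equation*}
Q_n(x)=F_n(x)-\sum_{j=0}^{s}M_j[Q_n]^{(j)}(0)\sum_{k=0}^{n-1}\frac{[F_k]^{(j)}(0)F_k(x)}{\|F_k\|^2}=F_n(x)-\sum_{j=0}^{s}M_j[Q_n]^{(j)}(0)K_{n-1}^{(0,j)}(x,0),
\end{equation*}
which is exactly (\ref{expan-2}). So far the derivation is formal in that the values $[Q_n]^{(j)}(0)$ remain to be determined.

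To pin them down, I would differentiate the connection formula $\ell$ times with respect to $x$ and evaluate at $x=0$ for $\ell=0,1,\ldots,s$. Using the notation (\ref{KDerivij}), each such specialization produces
\begin{equation*}
[Q_n]^{(\ell)}(0)+\sum_{j=0}^{s}M_j K_{n-1}^{(\ell,j)}(0,0)\,[Q_n]^{(j)}(0)=[F_n]^{(\ell)}(0),\qquad \ell=0,1,\ldots,s,
\end{equation*}
which is precisely the linear system $D\,\mathbf{v}=\mathbf{b}$ with unknown vector $\mathbf{v}=([Q_n](0),[Q_n]'(0),\ldots,[Q_n]^{(s)}(0))^{T}$ and right-hand side $\mathbf{b}=(F_n(0),[F_n]'(0),\ldots,[F_n]^{(s)}(0))^{T}$. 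Provided $\det D\neq 0$, Cramer's rule then yields the stated determinantal formula for each $[Q_n]^{(k)}(0)$.

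The main (and really only) obstacle is justifying $\det D\neq 0$, so that the linear system is solvable and (\ref{expan-2}) defines $Q_n$ uniquely. I would argue as follows: the matrix $G=\bigl(K_{n-1}^{(i,j)}(0,0)\bigr)_{0\le i,j\le s}$ is the Gram matrix of the linear functionals $p\mapsto p^{(i)}(0)$ acting on the finite-dimensional space $\mathrm{span}\{F_0,\ldots,F_{n-1}\}$ under $\langle\cdot,\cdot\rangle$, hence is symmetric and positive semidefinite. Since $\mathbf{M}=\mathrm{diag}(M_0,\ldots,M_s)$ is positive definite, $\mathbf{M}^{1/2}G\mathbf{M}^{1/2}$ is also positive semidefinite, and therefore $D=I+G\mathbf{M}$ is similar to $I+\mathbf{M}^{1/2}G\mathbf{M}^{1/2}$, whose eigenvalues lie in $[1,\infty)$. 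Consequently $\det D\geq 1>0$, which closes the argument.
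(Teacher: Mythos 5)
The paper does not actually prove this lemma---it only cites \cite{AMRR-JAT15}---and your argument is precisely the standard one behind that reference: Fourier expansion of the monic $Q_n$ in the Freud basis, elimination of the standard inner products $\langle Q_n,F_k\rangle$ via the Sobolev orthogonality conditions, recognition of the inner sum as $K_{n-1}^{(0,j)}(x,0)$, and Cramer's rule applied to the $(s+1)\times(s+1)$ system obtained by differentiating the connection formula at the origin. Your proof is correct and complete, and you even supply the solvability argument ($\det D\geq 1$) that the paper leaves implicit. The only cosmetic caveat is that when some $M_j=0$ the matrix $\mathbf{M}$ is not invertible, so $D=I+G\mathbf{M}$ is not literally \emph{similar} to $I+\mathbf{M}^{1/2}G\mathbf{M}^{1/2}$; however, the conclusion survives because $G\mathbf{M}=(G\mathbf{M}^{1/2})\mathbf{M}^{1/2}$ and $\mathbf{M}^{1/2}(G\mathbf{M}^{1/2})=\mathbf{M}^{1/2}G\mathbf{M}^{1/2}$ always share the same nonzero eigenvalues, so every eigenvalue of $D$ still lies in $[1,\infty)$.
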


Moreover, an easy computation shows that%
\begin{equation*}
K_{n-1}^{(0,k)}(x,0)=\frac{1}{||F_{n-1}||^{2}}\left( \sum_{\eta =0}^{k}\frac{%
k!}{\eta !}\frac{F_{n}(x)[F_{n-1}]^{(\eta )}(0)-F_{n-1}(x)[F_{n}]^{(\eta
)}(0)}{x^{k-\eta +1}}\right) ,
\end{equation*}%
and, as a consequence, we can write (\ref{expan-2}) as 
\begin{equation}
x^{s+1}Q_{n}(x)=\mathcal{A}_{s}(n;x)F_{n}(x)+\mathcal{B}_{s}(n;x)F_{n-1}(x),
\label{FormConex-3}
\end{equation}%
where%
\begin{eqnarray*}
\mathcal{A}_{s}(n;x) &=&\sum_{k=0}^{s}\left( x^{s+1}-\sum_{\eta =0}^{k}\frac{%
k!}{\eta !}\frac{M_{k}[Q_{n}]^{(k)}(0)[F_{n-1}]^{(\eta )}(0)}{||F_{n-1}||^{2}%
}x^{s-k+\eta }\right) , \\
\mathcal{B}_{s}(n;x) &=&\sum_{k=0}^{s}\left( \sum_{\eta =0}^{k}\frac{k!}{%
\eta !}\frac{M_{k}[Q_{n}]^{(k)}(0)[F_{n}]^{(\eta )}(0)}{||F_{n-1}||^{2}}%
x^{s-k+\eta }\right) ,
\end{eqnarray*}%
are polynomials of degree $s+1$\ and $s$, respectively.

%%%%%%%%%%%%%%%%%%%%%%%%%%%%%%%%%%%%%%%%%%%%%%%%%%%%%%%%%%%%%%%%%%%%%%%%%%%%%%%%%%%%%%%%%%%%%%%%%%

%%%%%%%%%%%%%%%%%%%%%%%%%%%%%%%%%%%%%%%%%%%%%%%%%%%%%%%%%%%%%%%%%%%%%%%%%%%%%%%%%%%%%%%%%%%%%%%%%%

\subsection{Connection formulas for monic polynomials}

%%%%%%%%%%%%%%%%%%%%%%%%%%%%%%%%%%%%%%%%%%%%%%%%%%%%%%%%%%%%%%%%%%%%%%%%%%%%%%%%%%%%%%%%%%%%%%%%%%
%%%%%%%%%%%%%%%%%%%%%%%%%%%%%%%%%%%%%%%%%%%%%%%%%%%%%%%%%%%%%%%%%%%%%%%%%%%%%%%%%%%%%%%%%%%%%%%%%%

%%%%%%%%%%%%%%%%%%%%%%%%%%%%%%%%%%%%%%%%%%%%%%%%%%%%%%%%%%%%%%%%%%%%%%%%%%%%%%%%%%%%%%%%%%%%%%%%%%

%%%%%%%%%%%%%%%%%%%%%%%%%%%%%%%%%%%%%%%%%%%%%%%%%%%%%%%%%%%%%%%%%%%%%%%%%%%%%%%%%%%%%%%%%%%%%%%%%%

In what follows, we restrict ourselves to study the case of only one mass
point with derivative in the inner product (\ref{Fb-InnerProduct}), i.e., $%
s=1$, $M_{0}\geq 0$, and $M_{1}\geq 0$,%
\begin{equation}
\left\langle p,q\right\rangle _{1}=\left\langle p,q\right\rangle
+M_{0}p(0)q(0)+M_{1}p^{\prime }(0)q^{\prime }(0).  \label{InnerDelta(0)}
\end{equation}%
In such a case, the connection formula (\ref{FormConex-3}) becomes%
\begin{equation}
x^{2}Q_{n}(x)=\mathcal{A}_{1}(n;x)F_{n}(x)+\mathcal{B}_{1}(n;x)F_{n-1}(x),
\label{CF-1}
\end{equation}%
where $\mathcal{A}_{1}(n;x)=x^{2}+\mathcal{A}_{10}(n)$, and $\mathcal{B}%
_{1}(n;x)=\mathcal{B}_{11}(n)x$ with%
\begin{equation*}
\mathcal{A}_{10}(n)=-\frac{M_{1}[Q_{n}]^{\prime }(0)F_{n-1}(0)}{%
||F_{n-1}||^{2}},\quad \mathcal{B}_{11}(n)=\frac{%
M_{0}Q_{n}(0)F_{n}(0)+M_{1}[Q_{n}]^{\prime }(0)[F_{n}]^{\prime }(0)}{%
||F_{n-1}||^{2}}.
\end{equation*}

To obtain $Q_{n}(0)$ and $[Q_{n}]^{\prime }(0)$ in the above expression, we
evaluate (\ref{CF-1}) at $x=0$ and solve the corresponding linear system.
Indeed,%
\begin{eqnarray*}
Q_{n}(0) &=&\frac{%
\begin{vmatrix}
F_{n}(0) & M_{1}K_{n-1}^{(0,1)}(0,0) \\ 
\lbrack F_{n}]^{\prime }(0) & 1+M_{1}K_{n-1}^{(1,1)}(0,0)%
\end{vmatrix}%
}{%
\begin{vmatrix}
1+M_{0}K_{n-1}(0,0) & M_{1}K_{n-1}^{(0,1)}(0,0) \\ 
M_{0}K_{n-1}^{(1,0)}(0,0) & 1+M_{1}K_{n-1}^{(1,1)}(0,0)%
\end{vmatrix}%
}, \\
\lbrack Q_{n}]^{\prime }(0) &=&\frac{%
\begin{vmatrix}
1+M_{0}K_{n-1}(0,0) & F_{n}(0) \\ 
M_{0}K_{n-1}^{(1,0)}(0,0) & [F_{n}]^{\prime }(0)%
\end{vmatrix}%
}{%
\begin{vmatrix}
1+M_{0}K_{n-1}(0,0) & M_{1}K_{n-1}^{(0,1)}(0,0) \\ 
M_{0}K_{n-1}^{(1,0)}(0,0) & 1+M_{1}K_{n-1}^{(1,1)}(0,0)%
\end{vmatrix}%
}.
\end{eqnarray*}%
As a consequence, 
\begin{eqnarray}
Q_{n}(0) &=&\displaystyle\frac{F_{n}(0)}{[1+M_{0}K_{n-1}(0,0)]},
\label{QenF1} \\
\lbrack Q_{n}]^{\prime }(0) &=&{\displaystyle\frac{[F_{n}]^{\prime }(0)}{%
1+M_{1}K_{n-1}^{(1,1)}(0,0)}}.  \label{QenF2}
\end{eqnarray}%
Thus, 
\begin{equation}
\begin{array}{rcllrl}
Q_{2n}(0) &  & ={\displaystyle\frac{F_{2n}(0)}{[1+M_{0}K_{2n-2}(0,0)]}}, & 
& Q_{2n+1}(0) & =0, \\ 
\lbrack Q_{2n+1}]^{\prime }(0) &  & ={\displaystyle\frac{[F_{2n+1}]^{\prime
}(0)}{1+M_{1}K_{2n-1}^{(1,1)}(0,0)}}, &  & [Q_{2n}]^{\prime }(0) & =0.%
\end{array}
\label{[Sec2]-Qn(0)Monic}
\end{equation}

Let us denote by $\{q_{n}(x)\}_{n\geq 0}$, with 
\begin{equation*}
q_{n}(x)=\zeta _{n}x^{n}+\text{\textit{\ lower degree terms,}}
\end{equation*}%
the sequence of Freud-Sobolev type orthonormal polynomials. The relation
between the leading coefficients $\zeta _{n}$ and $\gamma _{n}$ is given in
the following result.

%%%%%%%%%%%%%%%%%%%%%%%%%%%%%%%%%%%%%%%%%%%%%%%%%%%%%%%%%%%%%%%%%%%%%%%%%%%%%%%%%%%%%%%%%%%%%%%%%%

%%%%%%%%%%%%%%%%%%%%%%%%%%%%%%%%%%%%%%%%%%%%%%%%%%%%%%%%%%%%%%%%%%%%%%%%%%%%%%%%%%%%%%%%%%%%%%%%%%

\begin{proposition}
For $n\geq 1$, we have %\begin{equation}
%\frac{\|F_{n}\|^2}{\|Q_{n}\|^2}=\frac{\kappa^2 _{n}}{\gamma^2 _{n}}%
%=\left( \frac{1+M_{0}K_{n}(0,0)}{1+M_{0}K_{n-1}(0,0)}+\frac{%
%1+M_{1}K_{n}^{(1,1)}(0,0)}{1+M_{1}K_{n-1}^{(1,1)}(0,0)}-1\right) ^{-1}.
%\label{Lema1}
%\end{equation}%
%Moreover,
\begin{equation}
\begin{array}{r}
\displaystyle\frac{\Vert F_{2n}\Vert ^{2}}{\Vert Q_{2n}\Vert ^{2}}={%
\displaystyle\frac{1+M_{0}K_{2n-2}(0,0)}{1+M_{0}K_{2n}(0,0)}}, \\ 
\\ 
\displaystyle\frac{\Vert F_{2n+1}\Vert ^{2}}{\Vert Q_{2n+1}\Vert ^{2}}={%
\displaystyle\frac{1+M_{1}K_{2n-1}^{(1,1)}(0,0)}{1+M_{1}K_{2n+1}^{(1,1)}(0,0)%
}}.%
\end{array}
\label{Lema2}
\end{equation}
\end{proposition}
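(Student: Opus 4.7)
The plan is to compute $\|Q_n\|_s^2$ directly using the connection formula and the reproducing kernel, then match the resulting expression with the right-hand side of \eqref{Lema2}. Since $Q_n$ is monic and $Q_n \perp_s \mathbb{P}_{n-1}$, I would first observe
$$\|Q_n\|_s^2 = \langle Q_n, x^n\rangle_s = \langle Q_n, F_n\rangle_s = \langle Q_n, F_n\rangle + M_0 Q_n(0)F_n(0) + M_1[Q_n]'(0)[F_n]'(0).$$
Next, I would substitute the connection formula \eqref{expan-2} into $\langle Q_n, F_n\rangle$ and note that each kernel $K_{n-1}^{(0,k)}(\,\cdot\,,0)$ has degree $\leq n-1$, so these terms are killed by the orthogonality of $F_n$ with respect to $\langle\cdot,\cdot\rangle$. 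This yields the clean identity
$$\|Q_n\|_s^2 = \|F_n\|^2 + M_0 Q_n(0)F_n(0) + M_1[Q_n]'(0)[F_n]'(0).$$

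Then I would split into parities and invoke \eqref{QenF1}, \eqref{QenF2} together with the symmetry of the Freud family (Lemma \ref{[Sec2]-LEMMA1}). For $n=2n$, the term $[F_{2n}]'(0)=0$ kills the $M_1$ contribution, leaving
$$\|Q_{2n}\|_s^2 = \|F_{2n}\|^2 + \frac{M_0 F_{2n}(0)^2}{1+M_0 K_{2n-1}(0,0)}.$$
Since $F_{2n-1}(0)=0$, the summand of index $2n-1$ in $K_{2n-1}(0,0)$ vanishes, so $K_{2n-1}(0,0) = K_{2n-2}(0,0)$. Moreover, by the very definition of the kernel, $F_{2n}(0)^2/\|F_{2n}\|^2 = K_{2n}(0,0)-K_{2n-2}(0,0)$. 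Collecting fractions over the common denominator $1+M_0 K_{2n-2}(0,0)$ gives
$$\|Q_{2n}\|_s^2 = \|F_{2n}\|^2\,\frac{1+M_0 K_{2n}(0,0)}{1+M_0 K_{2n-2}(0,0)},$$
which rearranges to the first identity in \eqref{Lema2}.

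The odd case $n=2n+1$ proceeds symmetrically. Here $F_{2n+1}(0)=0$ kills the $M_0$ term, and the symmetry $[F_{2n}]'(0)=0$ gives both $K_{2n}^{(1,1)}(0,0)=K_{2n-1}^{(1,1)}(0,0)$ (so that \eqref{QenF2} matches the index $2n-1$ appearing in \eqref{Lema2}) and the telescoping identity $[F_{2n+1}]'(0)^2/\|F_{2n+1}\|^2 = K_{2n+1}^{(1,1)}(0,0)-K_{2n-1}^{(1,1)}(0,0)$. The same common-denominator manipulation yields the second identity.

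The proof is essentially mechanical once the connection formula reduces $\|Q_n\|_s^2$ to $\|F_n\|^2$ plus two boundary terms; the only subtle point is bookkeeping of indices, namely that $K_{n-1}(0,0)$ appearing in \eqref{QenF1} and \eqref{QenF2} must be rewritten in terms of kernels at levels $2n-2$ and $2n-1$ using the parity of the Freud polynomials. No new estimates are required, so I would not expect a genuine obstacle, only the care needed to ensure the parity-based telescoping matches the shifts on the right-hand side of \eqref{Lema2}.
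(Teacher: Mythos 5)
Your proof is correct, but it takes a genuinely different (and more direct) route than the paper. You compute the Sobolev norm head-on: since $Q_n$ is monic and $\langle\cdot,\cdot\rangle_s$-orthogonal to $\mathbb{P}_{n-1}$, you get $\Vert Q_n\Vert_s^2=\langle Q_n,F_n\rangle_s$, and because $Q_n-F_n$ has degree at most $n-1$ (the connection formula \eqref{expan-2} is not even strictly needed here, monicity alone suffices), the standard inner product contributes exactly $\Vert F_n\Vert^2$, leaving only the two point-evaluation terms $M_0Q_n(0)F_n(0)+M_1[Q_n]'(0)[F_n]'(0)$. The paper instead works with the orthonormal families: it expands $q_n=\sum_k c_{k,n}f_k$, identifies the coefficients $c_{k,n}$ for $k<n$ as $-M_0q_n(0)f_k(0)-M_1[q_n]'(0)[f_k]'(0)$, and compares the Parseval identity for $\int q_n^2e^{-x^4}dx$ with the normalization $\langle q_n,q_n\rangle_1=1$ to extract $(\zeta_n/\gamma_n)^2=\Vert F_n\Vert^2/\Vert Q_n\Vert_1^2$. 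From that point on the two arguments converge: both invoke \eqref{QenF1}--\eqref{QenF2}, the parity facts $K_{2n-1}(0,0)=K_{2n-2}(0,0)$ and $K_{2n}^{(1,1)}(0,0)=K_{2n-1}^{(1,1)}(0,0)$, and the telescoping identities $F_n(0)^2/\Vert F_n\Vert^2=K_n(0,0)-K_{n-1}(0,0)$ and its derivative analogue. Your version buys brevity and avoids the Fourier-coefficient bookkeeping (in particular the cross term that the paper must kill via $K_{n-1}^{(0,1)}(0,0)=0$); the paper's version has the side benefit of producing the intermediate Parseval-type identity for the coefficients $c_{k,n}$, which is of independent use. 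Your index bookkeeping is accurate throughout, so I see no gap.
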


%%%%%%%%%%%%%%%%%%%%%%%%%%%%%%%%%%%%%%%%%%%%%%%%%%%%%%%%%%%%%%%%%%%%%%%%%%%%%%%%%%%%%%%%%%%%%%%%%%

%%%%%%%%%%%%%%%%%%%%%%%%%%%%%%%%%%%%%%%%%%%%%%%%%%%%%%%%%%%%%%%%%%%%%%%%%%%%%%%%%%%%%%%%%%%%%%%%%%

\begin{proof}
Consider the Fourier expansion%
\begin{equation*}
q_{n}(x)=\sum_{k=0}^{n}c_{k,n}\,f_{k}(x),
\end{equation*}%
whose coefficients are (see (\ref{InnerDelta(0)}))%
\begin{equation*}
c_{k,n}=\int_{\mathbb{R}}q_{n}(x)f_{k}(x)e^{-x^{4}}dx=\langle
q_{n}(x),f_{k}(x)\rangle _{1}-M_{0}q_{n}(0)f_{k}(0)-M_{1}[q_{n}]^{\prime
}(0)[f_{k}]^{\prime }(0).
\end{equation*}%
If $k=n$, by comparing the leading coefficients, we obtain $c_{n,n}=\zeta
_{n}/\gamma _{n}$. When $k<n$, by orthogonality we have $\langle
q_{n}(x),f_{k}(x)\rangle _{1}=0$, so that%
\begin{equation*}
c_{k,n}=-M_{0}q_{n}(0)f_{k}(0)-M_{1}[q_{n}]^{\prime }(0)[f_{k}]^{\prime }(0).
\end{equation*}%
Hence,%
\begin{eqnarray*}
\int_{\mathbb{R}}[q_{n}(x)]^{2}e^{-x^{4}}dx &=&\left( \frac{\zeta _{n}}{%
\gamma _{n}}\right) ^{2}+\sum_{k=0}^{n-1}[c_{k,n}]^{2}[f_{k}(x)]^{2} \\
&=&\left( \frac{\zeta _{n}}{\gamma _{n}}\right)
^{2}+M_{0}^{2}q_{n}^{2}(0)\sum_{k=0}^{n-1}f_{k}^{2}(0)+M_{1}^{2}([q_{n}]^{%
\prime }(0))^{2}\sum_{k=0}^{n-1}([f_{k}]^{\prime }(0))^{2} \\
&&+2M_{0}M_{1}q_{n}(0)[q_{n}]^{\prime
}(0)\sum_{k=0}^{n-1}f_{k}(0)[f_{k}]^{\prime }(0).
\end{eqnarray*}%
On the other hand, by the orthonormality of $q_{n}(x)$ with respect to (\ref%
{InnerDelta(0)}),%
\begin{equation*}
\left\langle q_{n},q_{n}\right\rangle _{1}=1=\int_{\mathbb{R}%
}[q_{n}(x)]^{2}e^{-x^{4}}dx+M_{0}q_{n}^{2}(0)+M_{1}([q_{n}]^{\prime
}(0))^{2},
\end{equation*}%
so that%
\begin{equation*}
\int_{\mathbb{R}%
}[q_{n}(x)]^{2}e^{-x^{4}}dx=1-M_{0}q_{n}^{2}(0)-M_{1}([q_{n}]^{\prime
}(0))^{2}.
\end{equation*}%
Therefore,%
\begin{equation*}
1-M_{0}q_{n}^{2}(0)-M_{1}([q_{n}]^{\prime }(0))^{2}=\left( \frac{\zeta _{n}}{%
\gamma _{n}}\right) ^{2}+M_{0}^{2}q_{n}^{2}(0)\sum_{k=0}^{n-1}f_{k}^{2}(0)
\end{equation*}%
\begin{equation*}
+M_{1}^{2}([q_{n}]^{\prime }(0))^{2}\sum_{k=0}^{n-1}([f_{k}]^{\prime
}(0))^{2}+2M_{0}M_{1}q_{n}(0)[q_{n}]^{\prime
}(0)\sum_{k=0}^{n-1}f_{k}(0)[f_{k}]^{\prime }(0).
\end{equation*}%
Taking into account that $\sum_{k=0}^{n-1}f_{k}(0)[f_{k}]^{\prime
}(0)=K_{n-1}^{(0,1)}(0,0)=0$, we rewrite the above expression as%
\begin{equation*}
1-M_{0}q_{n}^{2}(0)-M_{1}([q_{n}]^{\prime }(0))^{2}=\left( \frac{\zeta _{n}}{%
\gamma _{n}}\right)
^{2}+M_{0}^{2}q_{n}^{2}(0)K_{n-1}(0,0)+M_{1}^{2}([q_{n}]^{\prime
}(0))^{2}K_{n-1}^{(1,1)}(0,0),
\end{equation*}%
and, as a consequence,%
\begin{equation*}
1=\left( \frac{\zeta _{n}}{\gamma _{n}}\right) ^{2}+M_{0}q_{n}^{2}(0)\left[
1+M_{0}K_{n-1}(0,0)\right] +M_{1}([q_{n}]^{\prime }(0))^{2}\left[
1+M_{1}K_{n-1}^{(1,1)}(0,0)\right] .
\end{equation*}%
Next, using the orthonormal versions of \eqref{QenF1} and \eqref{QenF2},
respectively, 
\begin{equation*}
q_{n}(0)=\frac{\zeta _{n}}{\gamma _{n}}\frac{f_{n}(0)}{1+M_{0}K_{n-1}(0,0)}%
,\quad \lbrack q_{n}]^{\prime }(0)=\frac{\zeta _{n}}{\gamma _{n}}\frac{%
[f_{n}]^{\prime }\left( 0\right) }{1+M_{1}K_{n-1}^{(1,1)}(0,0)},
\end{equation*}%
we obtain%
\begin{equation*}
1=\left( \frac{\zeta _{n}}{\gamma _{n}}\right) ^{2}\left[ 1+M_{0}\frac{%
(f_{n}(0))^{2}}{\left[ 1+M_{0}K_{n-1}(0,0)\right] }+M_{1}\frac{%
([f_{n}]^{\prime }\left( 0\right) )^{2}}{\left[ 1+M_{1}K_{n-1}^{(1,1)}(0,0)%
\right] }\right] .
\end{equation*}%
Since%
\begin{eqnarray*}
K_{n}(0,0)-K_{n-1}(0,0) &=&(f_{n}(0))^{2} \\
K_{n}^{(1,1)}(0,0)-K_{n-1}^{(1,1)}(0,0) &=&([f_{n}]^{\prime }(0))^{2}
\end{eqnarray*}%
we get%
\begin{eqnarray*}
1 &=&\left( \frac{\zeta _{n}}{\gamma _{n}}\right) ^{2}\left[ 1+\frac{%
M_{0}K_{n}(0,0)-M_{0}K_{n-1}(0,0)}{\left[ 1+M_{0}K_{n-1}(0,0)\right] }+\frac{%
M_{1}K_{n}^{(1,1)}(0,0)-M_{1}K_{n-1}^{(1,1)}(0,0)}{%
1+M_{1}K_{n-1}^{(1,1)}(0,0)}\right] , \\
1 &=&\left( \frac{\zeta _{n}}{\gamma _{n}}\right) ^{2}\left[ \frac{%
1+M_{0}K_{n}(0,0)}{1+M_{0}K_{n-1}(0,0)}+\frac{1+M_{1}K_{n}^{(1,1)}(0,0)}{%
1+M_{1}K_{n-1}^{(1,1)}(0,0)}-1\right]
\end{eqnarray*}%
which is (\ref{Lema2}).

%we shift the index in (\ref{Lema5})%
%\begin{eqnarray*}
%1 &=&\left( \frac{\kappa _{2n}}{\gamma _{2n}}\right) ^{2}\left[ 1+M_{0}\frac{%
%(f_{2n}(0))^{2}}{1+M_{0}K_{2n-1}(0,0)}+M_{1}\frac{([f_{2n}]^{\prime }\left(
%0\right) )^{2}}{1+M_{1}K_{2n-1}^{(1,1)}(0,0)}\right] , \\
%1 &=&\left( \frac{\kappa _{2n+1}}{\gamma _{2n+1}}\right) ^{2}\left[ 1+M_{0}%
%\frac{(f_{2n+1}(0))^{2}}{1+M_{0}K_{2n}(0,0)}+M_{1}\frac{([f_{2n+1}]^{\prime
%}\left( 0\right) )^{2}}{1+M_{1}K_{2n}^{(1,1)}(0,0)}\right] ,
%\end{eqnarray*}%
%and from Lemma \ref{[Sec2]-LEMMA1} we have $f_{2n+1}(0)=0$ and $%
%[f_{2n}]^{\prime }\left( 0\right) =0$, so%
%\begin{equation}
%\begin{array}{rcl}
%{\displaystyle\frac{\kappa _{2n}}{\gamma _{2n}}} & = & {\displaystyle\left[
%1+M_{0}\frac{(f_{2n}(0))^{2}}{1+M_{0}K_{2n-1}(0,0)}\right] ^{-1/2}}, \\
%{\displaystyle\frac{\kappa _{2n+1}}{\gamma _{2n+1}}} & = & {\displaystyle%
%\left[ 1+M_{1}\frac{([f_{2n+1}]^{\prime }\left( 0\right) )^{2}}{%
%1+M_{1}K_{2n}^{(1,1)}(0,0)}\right] ^{-1/2}}.%
%\end{array}
%\label{Lema7}
%\end{equation}%
%Next, from%
%\begin{eqnarray*}
%K_{n}(0,0) &=&(f_{n}(0))^{2}+K_{n-1}(0,0), \\
%K_{n}^{(1,1)}(0,0) &=&([f_{n}]^{\prime }(0))^{2}+K_{n-1}^{(1,1)}(0,0),
%\end{eqnarray*}%
%we get%
%\begin{eqnarray*}
%K_{2n}(0,0)-K_{2n-1}(0,0) &=&(f_{2n}(0))^{2}, \\
%K_{2n+1}^{(1,1)}(0,0)-K_{2n}^{(1,1)}(0,0) &=&([f_{2n+1}]^{\prime }(0))^{2}.
%\end{eqnarray*}%
%Replacing these expressions in (\ref{Lema7}), (\ref{Lema2})\ follows.
\end{proof}

Now, we obtain connection formulas that relate both families of monic
orthogonal polynomials.

%We will need to deal separately with the even $\{Q_{2n}(x)\}_{n\geq 0}$ and odd $\{Q_{2n+1}(x)\}_{n\geq 0}$
%subsequences of the Freud-Sobolev type sequence $\{Q_{n}(x)\}_{n\geq 0}$.
%The following connection formulas will come into play

%
%
%%%%%%%%%%%%%%%%%%%%%%%%%%%%%%%%%%%%%%%%%%%%%%%%%%%%%%%%%%%%%%%%%%%%%%%%%%%%%%%%%%%%%%%%%%%%%%%%%%

\begin{proposition}
\label{[Sec2]-PROP21} The Freud-Sobolev type orthogonal polynomials satisfy 
\begin{equation}  \label{conexiongeneral}
x^2Q_{n}(x)=\left[ x^2-\frac{r_{n}\kappa _{n}^{[1]}}{4\phi _{n}(0)}\right]
F_{n}(x)+a_{n}^{2}\left( \kappa _{n}^{[0]}{+}\kappa _{n}^{[1]}\right)
xF_{n-1}(x), \quad n\geq 1,
\end{equation}
where 
\begin{eqnarray*}
\kappa _{n}^{[0]} &=&\frac{1+M_{0}K_{n}(0,0)}{1+M_{0}K_{n-1}(0,0)}-1,\quad
\kappa _{n}^{[1]}=\frac{1+M_{1}K_{n}^{(1,1)}(0,0)}{%
1+M_{1}K_{n-1}^{(1,1)}(0,0)}-1, \quad r_{n} =\frac{1-(-1)^{n}}{2}.
\end{eqnarray*}%
Moreover, for the even and odd degrees, respectively, we have 
\begin{eqnarray}
Q_{2n}(x)&=&F_{2n}\left( x\right) -M_{0}\frac{F_{2n}(0)}{%
[1+M_{0}K_{2n-2}(0,0)]}K_{2n-2}(x,0), \quad n\geq 1,
\label{[Sec2]-Prop21iKer} \\
Q_{2n+1}(x)&=&F_{2n+1}\left( x\right) -M_{1}\frac{[F_{2n+1}]^{\prime }(0)}{%
1+M_{1}K_{2n-1}^{(1,1)}(0,0)}K_{2n-1}^{(0,1)}(x,0), \quad n\geq 1.
\label{[Sec2]-Prop21iiKer}
\end{eqnarray}
In other words, $Q_{2n}$ ($Q_{2n+1}$) is an even (odd) polynomial. 
%\begin{itemize}
%\item[$i)$] The even Freud-Sobolev type orthogonal polynomials $%
%\{Q_{2n}(x)\}_{n\geq 0}$ can be represented by
%\begin{equation}
%Q_{2n}(x)=F_{2n}\left( x\right) -M_{0}\frac{F_{2n}(0)}{[1+M_{0}K_{2n-1}(0,0)]%
%}K_{2n-1}(x,0).  \label{[Sec2]-Prop21iKer}
%\end{equation}%
%or%
%\begin{equation}
%xQ_{2n}(x)=xF_{2n}(x)+a_{2n}^{2}(b_{0,n}-1)F_{2n-1}(x),
%\label{[Sec2]-Prop21i}
%\end{equation}
%
%\item[$ii)$] The odd Freud-Sobolev type orthogonal polynomials $%
%\{Q_{2n+1}(x)\}_{n\geq 0}$ can be represented by
%\begin{equation}
%Q_{2n+1}(x)=F_{2n+1}\left( x\right) -M_{1}\frac{[F_{2n+1}]^{\prime }(0)}{%
%1+M_{1}K_{2n}^{(1,1)}(0,0)}K_{2n}^{(0,1)}(x,0).  \label{[Sec2]-Prop21iiKer}
%\end{equation}
%or
%\begin{equation}
%xQ_{2n+1}(x)=xF_{2n+1}(x)+a^2_{2n+1}(b_{1,n}-1)F_{2n}(x),
%\label{[Sec2]-Prop21ii}
%\end{equation}
%\end{itemize}
\end{proposition}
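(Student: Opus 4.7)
My plan is to anchor everything on the general connection formula (\ref{CF-1}), specialised using the values of $Q_n(0)$ and $[Q_n]'(0)$ from (\ref{QenF1})--(\ref{QenF2}) together with the symmetry of the Freud family.

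For the kernel formulas (\ref{[Sec2]-Prop21iKer}) and (\ref{[Sec2]-Prop21iiKer}), I would start from the general expansion (\ref{expan-2}) specialised to $s=1$,
\begin{equation*}
Q_n(x) = F_n(x) - M_0 Q_n(0)\, K_{n-1}(x,0) - M_1 [Q_n]'(0)\, K_{n-1}^{(0,1)}(x,0),
\end{equation*}
and invoke (\ref{[Sec2]-Qn(0)Monic}): at even index the derivative term drops because $[Q_{2n}]'(0)=0$, and at odd index the value term drops because $Q_{2n+1}(0)=0$. The shift from $K_{n-1}$ down to $K_{n-2}$ in each formula uses the kernel symmetries already listed in the text, namely $K_{2n-1}(x,0)=K_{2n-2}(x,0)$ and $K_{2n}^{(0,1)}(x,0) = K_{2n-1}^{(0,1)}(x,0)$. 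The parity assertions ($Q_{2n}$ even, $Q_{2n+1}$ odd) then drop out for free, since $K_{2n-2}(x,0)$ is even in $x$ (only even-indexed $F_k$ contribute to its defining sum at $y=0$) and $K_{2n-1}^{(0,1)}(x,0)$ is odd for the analogous reason.

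For the master formula (\ref{conexiongeneral}) I would identify $\mathcal{A}_{10}(n)$ and $\mathcal{B}_{11}(n)$ from (\ref{CF-1}) with the right-hand side. Using $\Vert F_n\Vert^2/\Vert F_{n-1}\Vert^2 = a_n^2$ together with the telescoping identities $K_n(0,0)-K_{n-1}(0,0) = (F_n(0))^2/\Vert F_n\Vert^2$ and $K_n^{(1,1)}(0,0)-K_{n-1}^{(1,1)}(0,0) = ([F_n]'(0))^2/\Vert F_n\Vert^2$, the definitions of $\kappa_n^{[0]}, \kappa_n^{[1]}$ simplify to
\begin{equation*}
\kappa_n^{[0]} = \frac{M_0 (F_n(0))^2/\Vert F_n\Vert^2}{1+M_0 K_{n-1}(0,0)}, \qquad \kappa_n^{[1]} = \frac{M_1 ([F_n]'(0))^2/\Vert F_n\Vert^2}{1+M_1 K_{n-1}^{(1,1)}(0,0)}.
\end{equation*}
Substituting (\ref{QenF1})--(\ref{QenF2}) directly into the expression for $\mathcal{B}_{11}(n)$ then yields $\mathcal{B}_{11}(n) = a_n^2(\kappa_n^{[0]}+\kappa_n^{[1]})$; for even $n$ only $\kappa_n^{[0]}$ survives and for odd $n$ only $\kappa_n^{[1]}$, in agreement with the parity observation above.

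The main obstacle is matching $\mathcal{A}_{10}(n)$ with $-r_n\kappa_n^{[1]}/(4\phi_n(0))$, because the factor $\phi_n(0)$ is nowhere to be seen in the kernel algebra. For even $n$ both $[Q_{2n}]'(0)$ and $F_{2n-1}(0)$ vanish, so $\mathcal{A}_{10}(2n)=0$ agrees with $r_{2n}=0$. For odd $n=2m+1$, substituting (\ref{QenF2}) gives
\begin{equation*}
\mathcal{A}_{10}(2m+1) = -\frac{M_1 [F_{2m+1}]'(0)\, F_{2m}(0)}{\Vert F_{2m}\Vert^2\,(1+M_1 K_{2m}^{(1,1)}(0,0))}.
\end{equation*}
The $\phi_n(0)$ factor is produced only by invoking the semiclassical derivative identity (\ref{fprimDer}) at $x=0$, which in the monic normalisation reads $[F_n]'(0) = 4 a_n^2\,\phi_n(0)\, F_{n-1}(0)$. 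Replacing $F_{2m}(0)$ by $[F_{2m+1}]'(0)/(4 a_{2m+1}^2 \phi_{2m+1}(0))$ and combining with $a_{2m+1}^2\Vert F_{2m}\Vert^2 = \Vert F_{2m+1}\Vert^2$ converts the displayed expression into precisely $-\kappa_{2m+1}^{[1]}/(4\phi_{2m+1}(0))$. This last manoeuvre is the true heart of the proof: the $\phi_n(0)$ is genuinely structural to the Freud weight and enters through (\ref{fprimDer}) rather than through any kernel manipulation.
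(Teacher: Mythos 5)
Your proposal is correct and follows essentially the same route as the paper: both specialise the general connection formula to $s=1$, insert the explicit values of $Q_n(0)$ and $[Q_n]'(0)$ together with the telescoping kernel identities to identify $\kappa_n^{[0]}$ and $\kappa_n^{[1]}$, and produce the $\phi_n(0)$ factor through the structure relation (\ref{fprimDer}) evaluated at the origin (the paper states it as $F_{n-1}(0)/[F_n]'(0)=1/(4a_n^2\phi_n(0))$, which is exactly your monic identity $[F_n]'(0)=4a_n^2\phi_n(0)F_{n-1}(0)$). The even/odd kernel formulas and the parity conclusion are likewise obtained in the paper exactly as you describe, via (\ref{[Sec2]-Qn(0)Monic}) and the symmetry-induced index shifts of the kernels.
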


%%%%%%%%%%%%%%%%%%%%%%%%%%%%%%%%%%%%%%%%%%%%%%%%%%%%%%%%%%%%%%%%%%%%%%%%%%%%%%%%%%%%%%%%%%%%%%%%%%

%%%%%%%%%%%%%%%%%%%%%%%%%%%%%%%%%%%%%%%%%%%%%%%%%%%%%%%%%%%%%%%%%%%%%%%%%%%%%%%%%%%%%%%%%%%%%%%%%%

\begin{proof}
Setting $s=1$ in (\ref{expan-2}) we get%
\begin{equation*}
Q_{n}(x)=F_{n}\left( x\right)
-M_{0}Q_{n}(0)K_{n-1}(x,0)-M_{1}[Q_{n}]^{\prime }(0)K_{n-1}^{(0,1)}(x,0).
\end{equation*}%
From \eqref{[Sec1]-Knxy01der-P} we have 
\begin{equation*}
K_{n-1}^{(0,1)}(x,0)=\left( \frac{F_{n-1}(0)+x[F_{n-1}]^{\prime }(0)}{%
x^{2}||F_{n-1}||^{2}}\right) F_{n}(x)-\left( \frac{F_{n}(0)+x[F_{n}]^{\prime
}(0)}{x^{2}||F_{n-1}||^{2}}\right) F_{n-1}(x),
\end{equation*}%
and taking into account \eqref{CristDar}, \eqref{QenF1}, \eqref{QenF2}, and
the symmetry of the Freud polynomials, we get

\begin{eqnarray*}
Q_{n}(x) &=&\left[ 1-\frac{F_{n-1}(0)}{[F_{n}]^{\prime }(0)}\frac{%
||F_{n}||^{2}}{x^{2}||F_{n-1}||^{2}}\left( \frac{1+M_{1}K_{n}^{(1,1)}(0,0)}{%
1+M_{1}K_{n-1}^{(1,1)}(0,0)}-1\right) \right] F_{n}(x) \\
&+&\frac{a_{n}^{2}}{x}\left[ \left( \frac{1+M_{0}K_{n}(0,0)}{%
1+M_{0}K_{n-1}(0,0)}-1\right) {+}\left( \frac{1+M_{1}K_{n}^{(1,1)}(0,0)}{%
1+M_{1}K_{n-1}^{(1,1)}(0,0)}-1\right) \right] F_{n-1}(x).
\end{eqnarray*}%
Denoting 
\begin{eqnarray*}
\kappa _{n}^{[0]} &=&\frac{1+M_{0}K_{n}(0,0)}{1+M_{0}K_{n-1}(0,0)}-1,\quad
\kappa _{n}^{[1]}=\frac{1+M_{1}K_{n}^{(1,1)}(0,0)}{%
1+M_{1}K_{n-1}^{(1,1)}(0,0)}-1, \quad r_{n} =\frac{1-(-1)^{n}}{2},
\end{eqnarray*}%
and noticing that from \eqref{fprimDer} we have $F_{n-1}(0)/[F_{n}]^{\prime
}(0)= 1/4a_{n}^{2}\phi _{n}(0)$, we obtain %\begin{equation*}
%Q_{n}(x)=\left[ 1-\frac{a_{n}^{2}}{x^{2}}s_{n}\kappa _{n}^{[1]}\right]
%F_{n}(x)+\frac{a_{n}^{2}}{x}\left( \kappa _{n}^{[0]}{+}\kappa
%_{n}^{[1]}\right) F_{n-1}(x)
%\end{equation*}
\begin{equation}  \label{conn_holo}
Q_{n}(x)=\left[ 1-\frac{r_{n}}{4x^{2}\phi _{n}(0)}\kappa _{n}^{[1]}\right]
F_{n}(x)+\frac{a_{n}^{2}}{x}\left( \kappa _{n}^{[0]}{+}\kappa
_{n}^{[1]}\right) F_{n-1}(x),
\end{equation}
which is \eqref{conexiongeneral}. On the other hand, shifting the index $%
n\rightarrow 2n$, and taking into account (\ref{[Sec2]-Qn(0)Monic}) we
obtain (\ref{[Sec2]-Prop21iKer}). For the odd case, (\ref{[Sec2]-Prop21iiKer}%
) follows similarly by using (\ref{[Sec2]-Qn(0)Monic}). 
%Christoffel-Darboux formula, we have
%\begin{equation*}
%K_{2n-2}(x,0)=K_{2n-1}(x,0)=-\frac{F_{2n-1}(x)F_{2n}(0)}{x\|F_{2n-1}\|^2}.
%\end{equation*}
%As a consequence, \eqref{[Sec2]-Prop21iKer} becomes
%\begin{equation*}
%xQ_{2n}(x)=xF_{2n}\left( x\right) +M_{0}\frac{F_{2n}^2(0)}{%
%\|F_{2n-1}\|^2[1+M_{0}K_{2n-2}(0,0)]}F_{2n-1}(x).
%\end{equation*}
%Since $F_{2n}^2(0)=\|F_{2n}\|^2(K_{2n}(0,0)-K_{2n-2}(0,0))$, we have
%\begin{equation*}
%\frac{M_0F_{2n}^2(0)}{\|F_{2n-1}\|^2[1+M_{0}K_{2n-2}(0,0)]}=\frac{%
%M_0a_{2n}^2(K_{2n}(0,0)-K_{2n-2}(0,0))}{1+M_{0}K_{2n-2}(0,0)}=a^2_{2n}(\frac{%
%\|Q_{2n}\|^2}{\|F_{2n}\|^2}-1),
%\end{equation*}
%where we have used \eqref{Lema2}. As a consequence, for the even case, \eqref{conexiongeneral} follows.
%
%For the even case, setting $n\rightarrow 2n+1$ in (\ref{[Sec2]-proof1})
%and taking into account (\ref{[Sec2]-Qn(0)Monic}), we deduce (\ref%
%{[Sec2]-Prop21iiKer}).
%
%The expression \eqref{conexiongeneral} for the odd case follows by plugging (\ref{F(0)Nulos})
%and (\ref{[Sec2]-Qn(0)Monic}) into (\ref{CF-1}) after the shift $%
%n\rightarrow 2n+1$.
\end{proof}

\begin{remark}
Notice that, from the symmetry of the Freud polynomials, we have $\kappa
_{2n+1}^{[0]}=0$ and $\kappa _{2n}^{[1]}=0$ for $n\geq 1$. As a consequence, %
\eqref{conexiongeneral} becomes 
\begin{eqnarray*}
xQ_{2n}(x) &=&xF_{2n}(x)+a_{2n}^{2}\kappa _{2n}^{[0]}F_{2n-1}(x),\quad n\geq
1, \\
x^{2}Q_{2n+1}(x) &=&\left[ x^{2}-\frac{r_{2n+1}\kappa _{2n+1}^{[1]}}{4\phi
_{2n+1}(0)}\right] F_{2n+1}(x)+a_{2n+1}^{2}\kappa
_{2n+1}^{[1]}xF_{2n}(x),\quad n\geq 1.
\end{eqnarray*}%
%
%
%
%
%
%
%
%
%
%
%
%
%
%
%
%
%
%
%
%
%
%
%
%
%
%
%
%
%
%
%
%
%
%
%
%
%
%
%
%
%
%
%
%
%
%An equivalent of the expression for the even case was obtained in \cite{AHM-AMC16}.
\end{remark}

The following is a straightforward extension of connection formulas %
\eqref{[Sec2]-Prop21iKer} and \eqref{[Sec2]-Prop21iiKer} for orthonormal
polynomials.

\begin{corollary}
\label{[Sec2]-PROP1}Let $q_{n}=\zeta _{n}x^{n}+\cdots $ and $f_{n}=\gamma
_{n}x^{n}+\cdots $ , then for $n\geq 1,$%
\begin{eqnarray*}
q_{2n}(x) &=&\frac{\zeta _{2n}}{\gamma _{2n}}%
f_{2n}(x)-M_{0}q_{2n}(0)K_{2n-2}(x,0), \\
q_{2n+1}(x) &=&\frac{\zeta _{2n+1}}{\gamma _{2n+1}}%
f_{2n+1}(x)-M_{1}[q_{2n+1}]^{\prime }(0)K_{2n-1}^{(0,1)}(x,0).
\end{eqnarray*}%
with%
\begin{equation*}
q_{2n}(0)=\frac{\zeta _{2n}}{\gamma _{2n}}\frac{f_{2n}(0)}{\left[
1+M_{0}K_{2n-2}(0,0)\right] },\quad \lbrack q_{2n+1}]^{\prime }(0)=\frac{%
\zeta _{2n+1}}{\gamma _{2n+1}}\frac{[f_{2n+1}]^{\prime }(0)}{\left[
1+M_{1}K_{2n-1}^{(1,1)}(0,0)\right] }.
\end{equation*}
\end{corollary}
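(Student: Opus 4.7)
The plan is to derive the orthonormal identities directly from the monic connection formulas \eqref{[Sec2]-Prop21iKer} and \eqref{[Sec2]-Prop21iiKer} of Proposition \ref{[Sec2]-PROP21}, simply by rescaling with the appropriate leading coefficients. The crucial observation is that the reproducing kernel is invariant under the choice of normalization: since $f_{k}=\gamma _{k}F_{k}$ with $\gamma_{k}^{2}=1/\|F_{k}\|^{2}$, one has $f_{k}(x)f_{k}(0)=F_{k}(x)F_{k}(0)/\|F_{k}\|^{2}$, so $K_{n-1}(x,0)$ and, by differentiation in the second argument, $K_{n-1}^{(0,1)}(x,0)$ admit identical expressions in the monic and the orthonormal bases. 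Hence no rescaling of the kernel terms is needed when passing between the two normalizations.

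First I would substitute $Q_{n}(x)=q_{n}(x)/\zeta _{n}$ and $F_{n}(x)=f_{n}(x)/\gamma _{n}$ into \eqref{[Sec2]-Prop21iKer} and multiply through by $\zeta _{2n}$, which yields
\begin{equation*}
q_{2n}(x)=\frac{\zeta _{2n}}{\gamma _{2n}}\,f_{2n}(x)-M_{0}\bigl(\zeta _{2n}Q_{2n}(0)\bigr)K_{2n-2}(x,0),
\end{equation*}
and then recognize $\zeta _{2n}Q_{2n}(0)=q_{2n}(0)$ to obtain the first formula. The odd case is entirely parallel: starting from \eqref{[Sec2]-Prop21iiKer}, one multiplies by $\zeta _{2n+1}$ and uses $\zeta _{2n+1}[Q_{2n+1}]^{\prime }(0)=[q_{2n+1}]^{\prime }(0)$.

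For the boundary evaluations, I would appeal to the closed forms \eqref{QenF1}--\eqref{QenF2} together with their even/odd specialisations in \eqref{[Sec2]-Qn(0)Monic}, and then rescale. Explicitly,
\begin{equation*}
q_{2n}(0)=\zeta _{2n}Q_{2n}(0)=\zeta _{2n}\,\frac{F_{2n}(0)}{1+M_{0}K_{2n-2}(0,0)}=\frac{\zeta _{2n}}{\gamma _{2n}}\,\frac{f_{2n}(0)}{1+M_{0}K_{2n-2}(0,0)},
\end{equation*}
and the derivative value $[q_{2n+1}]^{\prime }(0)$ is obtained in the same manner from the second line of \eqref{[Sec2]-Qn(0)Monic}.

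There is essentially no obstacle beyond careful bookkeeping of the leading coefficients. The only subtlety worth flagging is the invariance of the kernels under normalization, which is what makes the translation from monic to orthonormal formulas a matter of a single scalar factor $\zeta _{n}/\gamma _{n}$ in front of $f_{n}(x)$ rather than a genuine new computation; once this is noted, both identities and both boundary evaluations follow by inspection from Proposition \ref{[Sec2]-PROP21} and \eqref{[Sec2]-Qn(0)Monic}.
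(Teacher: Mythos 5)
Your proof is correct and follows exactly the route the paper intends (the corollary is stated there as a "straightforward extension" with no written proof): rescale the monic connection formulas \eqref{[Sec2]-Prop21iKer}--\eqref{[Sec2]-Prop21iiKer} by $\zeta_{n}$, use $F_{n}=f_{n}/\gamma_{n}$ and \eqref{[Sec2]-Qn(0)Monic}, and observe that the reproducing kernel is normalization-invariant since $\gamma_{k}^{2}=1/\|F_{k}\|^{2}$. Nothing is missing.
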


\begin{remark}
Notice that, by defining $Q_{2n}(x):=P_n(x^2)$ and $Q_{2n+1}(x):=xR_n(x^2)$,
for $n\geq0$ and introducing the change of variable $x=\sqrt{y}$, we obtain
the following orthogonality relations 
\begin{eqnarray*}
0= \langle Q_{2n}, Q_{2m}\rangle=\int_{0}^\infty P_n (y)P_m(y)y^{-1/
2}e^{-y^2}dy+M_0P_n(0)P_m(0),\quad n\neq m \\
0=\langle Q_{2n+1}, Q_{2m+1}\rangle=\int_{0}^\infty R_n (y)R_m(y)y^{1/
2}e^{-y^2}dy+M_1R_n(0)R_m(0),\quad n\neq m,
\end{eqnarray*}
i.e. $\{P_{n} (x)\}_{n\geq0}$ and $\{R_{n}(x)\}_{n\geq0}$ are MOPS with
respect to standard inner products associated with the measures $%
d\sigma(x)=x^{-1/2}e^{-x^2}dx + M_0\delta(x)$ and $xd\sigma(x)+ M_1\delta(x)$%
, respectively, supported on the positive real semiaxis.
\end{remark}

%%%%%%%%%%%%%%%%%%%%%%%%%%%%%%%%%%%%%%%%%%%%%%%%%%%%%%%%%%%%%%%%%%%%%%%%%%%%%%%%%%%%%%%%%%%%%%%%%%

%%%%%%%%%%%%%%%%%%%%%%%%%%%%%%%%%%%%%%%%%%%%%%%%%%%%%%%%%%%%%%%%%%%%%%%%%%%%%%%%%%%%%%%%%%%%%%%%%%

\subsection{The five-term recurrence relation}

\label{[Sec-4]-5TRR}

%%%%%%%%%%%%%%%%%%%%%%%%%%%%%%%%%%%%%%%%%%%%%%%%%%%%%%%%%%%%%%%%%%%%%%%%%%%%%%%%%%

%%%%%%%%%%%%%%%%%%%%%%%%%%%%%%%%%%%%%%%%%%%%%%%%%%%%%%%%%%%%%%%%%%%%%%%%%%%%%%%%%%

This section deals with the five-term recurrence relation that the sequence
of discrete Freud--Sobolev orthogonal polynomials $\{Q_{n}(x)\}_{n\geq 0}$
satisfies. We will use the remarkable fact, which is a straightforward
consequence of (\ref{InnerDelta(0)}), that the multiplication operator by $%
x^{2}$ is a symmetric operator with respect to such a discrete Sobolev inner
product. Indeed, for polynomials $h(x),g(x)\in \mathbb{P}$ 
\begin{equation*}
\langle x^{2}h(x),g(x)\rangle _{1}=\langle h(x),x^{2}g(x)\rangle _{1}.
\end{equation*}

%If we denote $q(x)=(x-c)^{2}g(x)$ in the above inner product, then
%\begin{equation*}
%\lbrack (x-c)^{2}g(x)]|_{x=c}=[(x-c)^{2}g(x)]^{\prime }|_{x=c}=0.
%\end{equation*}%
Notice that%
\begin{equation}
\langle x^{2}h(x),g(x)\rangle _{1}=\langle h(x),g(x)\rangle _{\lbrack 2]}.
\label{[Sec4]-Property1}
\end{equation}%
An equivalent formulation of (\ref{[Sec4]-Property1}) is%
\begin{equation*}
\langle x^{2}h(x),g(x)\rangle _{1}=\langle x^{2}h(x),g(x)\rangle .
\end{equation*}

We need a preliminary result.

\begin{lemma}
\label{[Sec4]-LEMMA-41}For every $n\geq 1$, the five term connection formula 
\begin{eqnarray*}
x^{2}Q_{n}(x) &=&F_{n+2}(x) +\left[ a_{n+1}^{2}+a_{n}^{2}+\mathcal{A}%
_{10}(n)+\mathcal{B}_{11}(n)\right] F_{n}(x) \\
&&+a_{n-1}^{2}\left[ a_{n}^{2}+\mathcal{B}_{11}(n)\right] F_{n-2}(x)
\end{eqnarray*}
holds.
\end{lemma}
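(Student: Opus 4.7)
The plan is to derive the claimed identity by a direct, essentially computational, manipulation of the connection formula \eqref{CF-1} using the three-term recurrence \eqref{3TRR-Monic} satisfied by the monic Freud polynomials. No new conceptual ingredient is needed; the only thing to watch is the careful bookkeeping of indices.

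First I would recall the connection formula from \eqref{CF-1}:
\begin{equation*}
x^{2}Q_{n}(x)=\bigl[x^{2}+\mathcal{A}_{10}(n)\bigr]F_{n}(x)+\mathcal{B}_{11}(n)\,xF_{n-1}(x).
\end{equation*}
The strategy is to eliminate the factors $x^{2}F_{n}(x)$ and $xF_{n-1}(x)$ on the right-hand side by expressing them as linear combinations of $F_{n+2}$, $F_{n}$, and $F_{n-2}$.

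Next, applying \eqref{3TRR-Monic} once gives $xF_{n-1}(x)=F_{n}(x)+a_{n-1}^{2}F_{n-2}(x)$. Applying it twice (and regrouping by index) yields
\begin{equation*}
x^{2}F_{n}(x)=F_{n+2}(x)+\bigl(a_{n+1}^{2}+a_{n}^{2}\bigr)F_{n}(x)+a_{n}^{2}a_{n-1}^{2}F_{n-2}(x).
\end{equation*}
Substituting these two expressions into the connection formula and collecting the coefficients of $F_{n+2}$, $F_{n}$, and $F_{n-2}$ produces the asserted identity. The coefficient of $F_{n+2}(x)$ is $1$; the coefficient of $F_{n}(x)$ combines $a_{n+1}^{2}+a_{n}^{2}$ from $x^{2}F_{n}$, $\mathcal{A}_{10}(n)$ from the $\mathcal{A}_{1}$ term, and $\mathcal{B}_{11}(n)$ coming from the $F_{n}$ part of $xF_{n-1}$; and the coefficient of $F_{n-2}(x)$ is $a_{n}^{2}a_{n-1}^{2}+\mathcal{B}_{11}(n)a_{n-1}^{2}=a_{n-1}^{2}\bigl[a_{n}^{2}+\mathcal{B}_{11}(n)\bigr]$.

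Since each step is a direct substitution, there is no genuine obstacle; the only delicate point is to keep track of the indices in the double application of \eqref{3TRR-Monic}, in particular making sure that $xF_{n+1}(x)=F_{n+2}(x)+a_{n+1}^{2}F_{n}(x)$ and that the recursive expansion of $a_{n}^{2}xF_{n-1}(x)$ produces exactly the $a_{n}^{2}a_{n-1}^{2}F_{n-2}(x)$ term contributing to the leading coefficient of $F_{n-2}$. The resulting identity then matches the statement of Lemma \ref{[Sec4]-LEMMA-41} verbatim, and completes the proof.
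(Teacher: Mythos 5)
Your proposal is correct and follows exactly the paper's own (one-line) proof: substitute the three-term recurrence \eqref{3TRR-Monic} into the connection formula \eqref{CF-1} and collect coefficients. The index bookkeeping you spell out checks out, so nothing further is needed.
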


%%%%%%%%%%%%%%%%%%%%%%%%%%%%%%%%%%%%%%%%%%%%%%%%%%%%%%%%%%%%%%%%%%%%%%%%%%%%%%%%%%%%%%%%%%%%%%%%%%

%%%%%%%%%%%%%%%%%%%%%%%%%%%%%%%%%%%%%%%%%%%%%%%%%%%%%%%%%%%%%%%%%%%%%%%%%%%%%%%%%%%%%%%%%%%%%%%%%%

\begin{proof}
The result follows easily from (\ref{CF-1}) after successive applications of
(\ref{3TRR-Monic}).
\end{proof}

%%%%%%%%%%%%%%%%%%%%%%%%%%%%%%%%%%%%%%%%%%%%%%%%%%%%%%%%%%%%%%%%%%%%%%%%%%%%%%%%%%%%%%%%%%%%%%%%%%

%%%%%%%%%%%%%%%%%%%%%%%%%%%%%%%%%%%%%%%%%%%%%%%%%%%%%%%%%%%%%%%%%%%%%%%%%%%%%%%%%%%%%%%%%%%%%%%%%%

We are ready to find the five-term recurrence relation satisfied by $%
\{Q_{n}(x)\}_{n\geq 0}$. Let us consider the Fourier expansion of $%
x^{2}Q_{n}(x)$ in terms of $\{Q_{n}(x)\}_{n\geq 0}$%
\begin{equation*}
x^{2}Q_{n}(x)=\sum_{k=0}^{n+2}\lambda _{n,k}Q_{k}(x),
\end{equation*}%
where%
\begin{equation}
\lambda _{n,k}=\frac{\langle x^{2}Q_{n}(x),Q_{k}(x)\rangle _{1}}{%
||Q_{k}||_{1}^{2}},\quad k=0,\ldots ,n+2.  \label{[Sec4]-CoefsS1}
\end{equation}%
Thus, $\lambda _{n,k}=0$ for $k=0,\ldots ,n-3$, and $\lambda _{n,n+2}=1$. To
obtain $\lambda _{n,n+1}$, we use (\ref{CF-1}) and get 
\begin{eqnarray*}
\lambda _{n,n+1} &=&\frac{1}{||Q_{n+1}||_{1}^{2}}\langle \mathcal{A}%
_{1}(n;x)F_{n}^{\alpha }(x),Q_{n+1}(x)\rangle _{1}+\frac{1}{%
||Q_{n+1}||_{1}^{2}}\langle \mathcal{B}_{1}(n;x)F_{n-1}(x),Q_{n+1}(x)\rangle
_{1} \\
&=&\frac{1}{||Q_{n+1}||_{1}^{2}}\langle x^{2}F_{n}(x),Q_{n+1}(x)\rangle _{1}=%
\frac{1}{||Q_{n+1}||_{1}^{2}}\langle F_{n}(x),x^{2}Q_{n+1}(x)\rangle =0,
\end{eqnarray*}%
by using Lemma \ref{[Sec4]-LEMMA-41}. In order to compute $\lambda _{n,n}$,
using (\ref{CF-1}) we get%
\begin{equation*}
\lambda _{n,n}=\frac{\langle x^{2}F_{n}(x),Q_{n}(x)\rangle _{1}}{%
||Q_{n}||_{1}^{2}}+\mathcal{A}_{10}(n)+\mathcal{B}_{11}(n).
\end{equation*}%
But, according to Lemma \ref{[Sec4]-LEMMA-41}, the first term is 
\begin{equation*}
\frac{\langle x^{2}F_{n}(x),Q_{n}(x)\rangle _{1}}{||Q_{n}||_{1}^{2}}=\left[
a_{n+1}^{2}+a_{n}^{2}+\mathcal{A}_{10}(n)+\mathcal{B}_{11}(n)\right] \frac{%
||F_{n}||^{2}}{||Q_{n}||_{1}^{2}},
\end{equation*}%
so that 
\begin{equation*}
\lambda _{n,n}=\left[ a_{n+1}^{2}+a_{n}^{2}+\mathcal{A}_{10}(n)+\mathcal{B}%
_{11}(n)\right] \frac{||F_{n}||^{2}}{||Q_{n}||_{1}^{2}}+\mathcal{A}_{10}(n)+%
\mathcal{B}_{11}(n).
\end{equation*}%
A similar analysis yields $\lambda _{n,n-1}=0$ and%
\begin{eqnarray*}
\lambda _{n,n-2} &=&\frac{\langle x^{2}Q_{n}(x),Q_{n-2}(x)\rangle }{%
||Q_{n-2}||_{1}^{2}}=\frac{\langle a_{n-1}^{2}\left[ a_{n}^{2}+\mathcal{B}%
_{11}(n)\right] F_{n-2}(x),Q_{n-2}(x)\rangle }{||Q_{n-2}||_{1}^{2}} \\
&=&a_{n-1}^{2}\left[ a_{n}^{2}+\mathcal{B}_{11}(n)\right] \frac{%
||F_{n-2}||^{2}}{||Q_{n-2}||_{1}^{2}}.
\end{eqnarray*}%
Thus, as a conclusion:

\begin{theorem}[Five-term recurrence relation]
For every $n\geq 1$, the monic Freud-Sobolev type polynomials $%
\{Q_{n}(x)\}_{n\geq 0}$, orthogonal with respect to (\ref{InnerDelta(0)}),
satisfy the following five-term recurrence relation%
\begin{equation}
x^{2}Q_{n}(x)=Q_{n+2}(x)+\lambda _{n,n}Q_{n}(x)+\lambda
_{n,n-2}Q_{n-2}(x),\quad n\geq 1,  \label{five_term_Q}
\end{equation}%
with initial conditions $Q_{-1}(x)=0,$ $Q_{0}(x)=1$, $Q_{1}(x)=x$, and $%
Q_{2}(x)=x^{2}-\lambda _{0,0}$, where%
\begin{eqnarray*}
\lambda _{n,n} &=&\left[ a_{n+1}^{2}+a_{n}^{2}+\mathcal{A}_{10}(n)+\mathcal{B%
}_{11}(n)\right] \frac{||F_{n}||^{2}}{||Q_{n}||_{1}^{2}}+\mathcal{A}_{10}(n)+%
\mathcal{B}_{11}(n),\quad n\geq 0, \\
\lambda _{n,n-2} &=&a_{n-1}^{2}\left[ a_{n}^{2}+\mathcal{B}_{11}(n)\right] 
\frac{||F_{n-2}||^{2}}{||Q_{n-2}||_{1}^{2}},\quad \quad n\geq 2.
\end{eqnarray*}
\end{theorem}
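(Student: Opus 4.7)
The plan is to exploit the structural observation emphasized just before the theorem: multiplication by $x^{2}$ is symmetric with respect to $\langle \cdot ,\cdot \rangle _{1}$. Because $\{Q_{n}\}_{n\geq 0}$ is an orthogonal basis of $\mathbb{P}$, I would start from the Fourier expansion
$$
x^{2}Q_{n}(x)=\sum_{k=0}^{n+2}\lambda _{n,k}Q_{k}(x),\qquad \lambda _{n,k}=\frac{\langle x^{2}Q_{n},Q_{k}\rangle _{1}}{\|Q_{k}\|_{1}^{2}},
$$
and argue that most coefficients vanish. For $k\leq n-3$, symmetry gives $\langle x^{2}Q_{n},Q_{k}\rangle _{1}=\langle Q_{n},x^{2}Q_{k}\rangle _{1}=0$, since $x^{2}Q_{k}$ has degree at most $n-1$ and $Q_{n}$ is $\langle \cdot ,\cdot \rangle _{1}$-orthogonal to every polynomial of strictly lower degree. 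Comparing leading coefficients on both sides forces $\lambda _{n,n+2}=1$, which accounts for the monic structure on the right.

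Next I would identify the four potentially nonzero coefficients $\lambda _{n,n+1}$, $\lambda _{n,n}$, $\lambda _{n,n-1}$, $\lambda _{n,n-2}$. For $\lambda _{n,n+1}$ and $\lambda _{n,n-1}$, I would replace $Q_{n}$ in the numerator using the connection formula \eqref{CF-1}; the resulting inner products are of the form $\langle x^{2}F_{n},Q_{k}\rangle _{1}$ plus lower-order terms. Moving the $x^{2}$ back onto the $Q_{k}$ factor via symmetry, one reaches $\langle F_{n},x^{2}Q_{k}\rangle _{1}$ with $k=n\pm 1$, and Lemma \ref{[Sec4]-LEMMA-41} expands $x^{2}Q_{k}$ as a combination of $F_{k+2},F_{k},F_{k-2}$. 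Since none of these indices equals $n$, orthogonality of the Freud family (together with the vanishing of point-mass contributions because the corresponding values at $0$ cancel or one factor is of too high a degree) yields $\lambda _{n,n+1}=\lambda _{n,n-1}=0$.

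To compute $\lambda _{n,n}$, I would again substitute \eqref{CF-1} into the numerator, splitting it into the three pieces produced by $\mathcal{A}_{1}(n;x)F_{n}+\mathcal{B}_{1}(n;x)F_{n-1}$; the cross terms collapse to the scalars $\mathcal{A}_{10}(n)+\mathcal{B}_{11}(n)$ and the remaining piece $\langle x^{2}F_{n},Q_{n}\rangle _{1}$ is evaluated directly with Lemma \ref{[Sec4]-LEMMA-41}, producing the factor $\|F_{n}\|^{2}/\|Q_{n}\|_{1}^{2}$ in front of $a_{n+1}^{2}+a_{n}^{2}+\mathcal{A}_{10}(n)+\mathcal{B}_{11}(n)$. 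An entirely parallel computation, using only the $F_{n-2}$ term in the five-term expansion provided by Lemma \ref{[Sec4]-LEMMA-41}, yields $\lambda _{n,n-2}=a_{n-1}^{2}[a_{n}^{2}+\mathcal{B}_{11}(n)]\,\|F_{n-2}\|^{2}/\|Q_{n-2}\|_{1}^{2}$. The initial conditions follow from monicity, from $Q_{0}=1$ and $Q_{1}=x$ (symmetry plus degree), and from evaluating $Q_{2}$ at $x=0$ using Proposition \ref{[Sec2]-PROP21}.

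The only real obstacle is the careful bookkeeping in the two symmetry swaps: one must track when the point-mass contributions $M_{0}p(0)q(0)+M_{1}p'(0)q'(0)$ are actually killed (because one factor is in the Freud-orthogonal complement of the other) versus when they survive and must be re-absorbed into $\mathcal{A}_{10}(n)$ and $\mathcal{B}_{11}(n)$. Once Lemma \ref{[Sec4]-LEMMA-41} is invoked, everything reduces to Freud-side inner products where orthogonality is transparent, so the remaining work is routine coefficient bookkeeping.
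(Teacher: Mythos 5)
Your proposal is correct and follows essentially the same route as the paper: the Fourier expansion of $x^{2}Q_{n}$ in the $Q_{k}$ basis, the symmetry of multiplication by $x^{2}$ with respect to $\langle\cdot,\cdot\rangle_{1}$ to kill the coefficients with $k\leq n-3$, and the substitution of the connection formula \eqref{CF-1} together with Lemma \ref{[Sec4]-LEMMA-41} to evaluate $\lambda_{n,n}$, $\lambda_{n,n-2}$ and to show $\lambda_{n,n\pm1}=0$. The bookkeeping you flag at the end (when the point-mass terms survive a symmetry swap) is handled in the paper exactly as you describe, via $\langle x^{2}h,g\rangle_{1}=\langle x^{2}h,g\rangle$.
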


We now proceed to analyze the asymptotic behavior of the coefficients.
First, we need the following lemma.

\begin{lemma}
We have 
\begin{equation}  \label{normas}
\lim_{n\rightarrow\infty}\frac{\|Q_n\|}{\|F_n\|}=1+\mathcal{O}(n^{-1}).
\end{equation}
\end{lemma}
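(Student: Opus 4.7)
The plan is to start from the identity \eqref{Lema2} in Proposition~2, which expresses the ratio of squared norms in terms of kernels evaluated at the origin, and then quantify how fast the two consecutive kernels differ. I would split into even and odd indices.

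For the even case, write
\begin{equation*}
\frac{\|Q_{2n}\|^{2}}{\|F_{2n}\|^{2}}
= \frac{1+M_{0}K_{2n}(0,0)}{1+M_{0}K_{2n-2}(0,0)}
= 1+\frac{M_{0}\bigl[K_{2n}(0,0)-K_{2n-2}(0,0)\bigr]}{1+M_{0}K_{2n-2}(0,0)}.
\end{equation*}
Using the telescoping identity $K_{n}(0,0)-K_{n-1}(0,0)=f_{n}(0)^{2}$ together with $f_{2n-1}(0)=0$ from Lemma~\ref{[Sec2]-LEMMA1}, the numerator equals $M_{0}f_{2n}(0)^{2}$, which is of order $n^{-1/4}$ by Lemma~\ref{[Sec2]-LEMMA1}, while the denominator is of order $n^{3/4}$ by Lemma~\ref{[Sec2]-LEMMA2}. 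Hence this ratio is $1+\mathcal{O}(n^{-1})$, and taking square roots preserves this error rate.

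For the odd case the same manipulation on the second formula of \eqref{Lema2} gives
\begin{equation*}
\frac{\|Q_{2n+1}\|^{2}}{\|F_{2n+1}\|^{2}}
= 1+\frac{M_{1}\bigl[K_{2n+1}^{(1,1)}(0,0)-K_{2n-1}^{(1,1)}(0,0)\bigr]}{1+M_{1}K_{2n-1}^{(1,1)}(0,0)}.
\end{equation*}
The analogous identity $K_{n}^{(1,1)}(0,0)-K_{n-1}^{(1,1)}(0,0)=([f_{n}]'(0))^{2}$ combined with $[f_{2n}]'(0)=0$ reduces the numerator to $M_{1}([f_{2n+1}]'(0))^{2}=\mathcal{O}(n^{5/4})$, while Lemma~\ref{[Sec2]-LEMMA2} gives a denominator of order $n^{9/4}$, again yielding a $1+\mathcal{O}(n^{-1})$ ratio. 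Taking square roots and combining the two parities finishes the proof.

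No real obstacle is expected here: the statement is essentially a bookkeeping consequence of the explicit formulas \eqref{Lema2} together with the kernel asymptotics of Lemmas~\ref{[Sec2]-LEMMA1}--\ref{[Sec2]-LEMMA2}. The only mild delicacy is ensuring that in both parities the \emph{gain} $n^{-1}$ arises from the same mechanism, namely that the single new term in the telescoped kernel is one full power of $n$ smaller than the accumulated kernel itself; this can be read off directly from the exponents $-1/4$ vs.\ $3/4$ and $5/4$ vs.\ $9/4$ provided by the previous two lemmas.
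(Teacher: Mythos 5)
Your proposal is correct and follows essentially the same route as the paper: both start from \eqref{Lema2}, use the telescoping identities $K_{2n}(0,0)-K_{2n-2}(0,0)=f_{2n}^{2}(0)$ and $K_{2n+1}^{(1,1)}(0,0)-K_{2n-1}^{(1,1)}(0,0)=([f_{2n+1}]'(0))^{2}$, and then invoke Lemmas \ref{[Sec2]-LEMMA1} and \ref{[Sec2]-LEMMA2}. You have simply made explicit the exponent bookkeeping ($-1/4$ vs.\ $3/4$ and $5/4$ vs.\ $9/4$) that the paper leaves to the reader.
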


\begin{proof}
Let us consider first the even case. From \eqref{[Sec1]-leadcoefp} and its
analogue for $Q_{n}$, as well as \eqref{Lema2}, we have%
\begin{equation*}
\frac{\Vert Q_{2n}\Vert ^{2}}{\Vert F_{2n}\Vert ^{2}}={\displaystyle\frac{%
1+M_{0}K_{2n}(0,0)}{1+M_{0}K_{2n-2}(0,0)}=\frac{%
1+M_{0}(K_{2n-2}(0,0)+f_{2n}^{2}(0))}{1+M_{0}K_{2n-2}(0,0)}}.
\end{equation*}%
Taking into account Lemmas \ref{[Sec2]-LEMMA1} and \ref{[Sec2]-LEMMA2}, the
result follows. The odd case is similar.
\end{proof}

%otrotrabajo% \newline

Notice that from successive applications of the three term recurrence
relation (\ref{3TRR-Monic}), we get 
\begin{equation*}
x^{2}F_{n}(x) =F_{n+2}(x) +[a_{n+1}^{2}+a_{n}^{2}] F_{n}(x) +a_{n-1}^{2}
a_{n}^{2} F_{n-2}(x).
\end{equation*}
We will show that, when $n\rightarrow\infty$, the five term recurrence
relation \eqref{five_term_Q} behaves exactly as the previous equation.

\begin{proposition}
We have%
\begin{equation*}
\lim_{n\rightarrow \infty }\frac{\lambda _{n,n}}{a_{n+1}^{2}+a_{n}^{2}}=1+%
\mathcal{O}(n^{-2}),\quad \text{and}\quad \lim_{n\rightarrow \infty }\frac{%
\lambda _{n,n-2}}{a_{n-1}^{2}a_{n}^{2}}=1+\mathcal{O}(n^{-3/2}).
\end{equation*}
\end{proposition}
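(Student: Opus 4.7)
The plan is to substitute the closed-form expressions for $\mathcal{A}_{10}(n)$, $\mathcal{B}_{11}(n)$, and $R_n:=\|F_n\|^2/\|Q_n\|_1^2$ into the formulas for $\lambda_{n,n}$ and $\lambda_{n,n-2}$, split by the parity of $n$, and control everything with Lemmas \ref{[Sec2]-LEMMA1}--\ref{[Sec2]-LEMMA2}, the preceding norm lemma \eqref{normas}, and the Lew--Quarles expansion (\ref{[Sec1]-LewQuarles}). By the parity relations and \eqref{[Sec2]-Qn(0)Monic}, for $n=2m$ one has $\mathcal{A}_{10}(n)=0$ and $\mathcal{B}_{11}(n)=M_0 a_n^2 f_n(0)^2/(1+M_0 K_{n-2}(0,0))$ (after rewriting $F_n(0)^2/\|F_{n-1}\|^2=a_n^2 f_n(0)^2$); for $n=2m+1$ one obtains the analogous formulas involving $[f_n]'(0)^2$ and $K_{n-2}^{(1,1)}(0,0)$. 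Direct application of Lemmas \ref{[Sec2]-LEMMA1}--\ref{[Sec2]-LEMMA2} then yields $\mathcal{B}_{11}(n)=\mathcal{O}(n^{-1/2})$, $\mathcal{A}_{10}(n)=\mathcal{O}(n^{-3/2})$, and $R_n=1+\mathcal{O}(n^{-1})$.

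For the first limit I would start from
\begin{equation*}
\frac{\lambda_{n,n}}{a_{n+1}^2+a_n^2}-1
= (R_n-1) + \frac{(\mathcal{A}_{10}(n)+\mathcal{B}_{11}(n))(R_n+1)}{a_{n+1}^2+a_n^2}.
\end{equation*}
Each summand is only $\mathcal{O}(n^{-1})$, so the decisive step is the cancellation of the leading pieces. From \eqref{Lema2}, the even case gives $R_n-1=-M_0 f_n(0)^2/(1+M_0 K_n(0,0))$; from (\ref{[Sec1]-LewQuarles}), $a_{n+1}^2+a_n^2=2a_n^2(1+\mathcal{O}(n^{-1}))$, so the second summand equals $M_0 f_n(0)^2/(1+M_0 K_{n-2}(0,0))+\mathcal{O}(n^{-2})$. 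Their difference collapses to
\begin{equation*}
\frac{M_0^2\,f_n(0)^4}{\bigl(1+M_0 K_{n-2}(0,0)\bigr)\bigl(1+M_0 K_n(0,0)\bigr)} + \mathcal{O}(n^{-2}),
\end{equation*}
which Lemmas \ref{[Sec2]-LEMMA1} and \ref{[Sec2]-LEMMA2} bound by $n^{-1/2}\cdot n^{-3/2}=\mathcal{O}(n^{-2})$. The odd case is parallel, with $K^{(1,1)}(0,0)$ and $[f_n]'(0)$ in place of $K(0,0)$ and $f_n(0)$; the extra $\mathcal{A}_{10}$ piece satisfies $\mathcal{A}_{10}(n)/(a_{n+1}^2+a_n^2)=\mathcal{O}(n^{-2})$ and is absorbed.

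For the second limit, I would write
\begin{equation*}
\frac{\lambda_{n,n-2}}{a_{n-1}^2 a_n^2}-1
= (R_{n-2}-1) + \frac{\mathcal{B}_{11}(n)}{a_n^2}\,R_{n-2},
\end{equation*}
and argue the same way: in the even case \eqref{Lema2} gives $R_{n-2}-1=-M_0 f_{n-2}(0)^2/(1+M_0 K_{n-2}(0,0))$ and $\mathcal{B}_{11}(n)/a_n^2 = M_0 f_n(0)^2/(1+M_0 K_{n-2}(0,0))$, whose sum equals $M_0\bigl(f_n(0)^2-f_{n-2}(0)^2\bigr)/(1+M_0 K_{n-2}(0,0))+\mathcal{O}(n^{-2})$. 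By Lemma \ref{[Sec2]-LEMMA1} the expansion $n^{-1/4}-(n-2)^{-1/4}$ shows $f_n(0)^2-f_{n-2}(0)^2=\mathcal{O}(n^{-5/4})$, and by Lemma \ref{[Sec2]-LEMMA2} $K_{n-2}(0,0)=\mathcal{O}(n^{3/4})$, yielding the bound $\mathcal{O}(n^{-2})\subset\mathcal{O}(n^{-3/2})$. The odd case mirrors this, now with $([f_n]'(0))^2-([f_{n-2}]'(0))^2=\mathcal{O}(n^{1/4})$ and $K_{n-2}^{(1,1)}(0,0)=\mathcal{O}(n^{9/4})$.

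The main obstacle is executing the cancellation of the two leading $\mathcal{O}(n^{-1})$ pieces cleanly and uniformly in $n$. The enabling identities are the telescopings $K_n(0,0)-K_{n-2}(0,0)=f_n(0)^2$ and $K_n^{(1,1)}(0,0)-K_{n-2}^{(1,1)}(0,0)=([f_n]'(0))^2$ (valid thanks to the opposite-parity vanishing at the origin), combined with the second-order Lew--Quarles asymptotics $a_{n+1}^2/a_n^2 = 1 + \mathcal{O}(n^{-1})$ from (\ref{[Sec1]-LewQuarles}). Once these cancellations are in place, the remaining bookkeeping is a direct application of the asymptotic sizes collected in Lemmas \ref{[Sec2]-LEMMA1} and \ref{[Sec2]-LEMMA2}.
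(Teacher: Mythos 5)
Your overall route is the same as the paper's: compute $\mathcal{A}_{10}(n)$ and $\mathcal{B}_{11}(n)$ explicitly by parity via \eqref{[Sec2]-Qn(0)Monic} and \eqref{[Sec2]-K00cc}, rewrite everything in terms of orthonormal data, and feed the orders from Lemmas \ref{[Sec2]-LEMMA1} and \ref{[Sec2]-LEMMA2} together with \eqref{normas} and \eqref{[Sec1]-LewQuarles} into the formulas for $\lambda_{n,n}$ and $\lambda_{n,n-2}$. You go further than the paper in one important respect: the paper stops at $\mathcal{A}_{10}(n)=\mathcal{O}(n^{-3/2})$, $\mathcal{B}_{11}(n)=\mathcal{O}(n^{-1/2})$, $\Vert F_n\Vert^2/\Vert Q_n\Vert_1^2=1+\mathcal{O}(n^{-1})$ and then simply asserts the rates, whereas a naive combination of those three facts only yields $1+\mathcal{O}(n^{-1})$ for both ratios. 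You correctly identify that the stated rates require a cancellation between $R_n-1$ and $\mathcal{B}_{11}(n)/(a_{n+1}^2+a_n^2)$, and for the first limit your telescoping $K_n(0,0)-K_{n-2}(0,0)=f_n(0)^2$ (resp.\ its $K^{(1,1)}$ analogue) executes it cleanly, leaving $M_0^2 f_n(0)^4/\bigl[(1+M_0K_{n-2}(0,0))(1+M_0K_n(0,0))\bigr]=\mathcal{O}(n^{-2})$. (For that last bound you implicitly need a lower bound $K_n(0,0)\geq c\,n^{3/4}$; it does follow from Lemma \ref{[Sec2]-LEMMA1} since $A\neq 0$, but Lemma \ref{[Sec2]-LEMMA2} as stated only records an upper bound, so this should be made explicit.)

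The gap is in the second limit. After your (correct) reduction to $M_0\bigl(f_n(0)^2-f_{n-2}(0)^2\bigr)/(1+M_0K_{n-2}(0,0))+\mathcal{O}(n^{-2})$, you claim $f_n(0)^2-f_{n-2}(0)^2=\mathcal{O}(n^{-5/4})$ ``by the expansion $n^{-1/4}-(n-2)^{-1/4}$''. That step is not justified by Lemma \ref{[Sec2]-LEMMA1}: the lemma only gives $f_n(0)^2=n^{-1/4}(A^2+o(1))$ with an uncontrolled multiplicative $o(1)$, so the $o(1)$ errors of two consecutive even indices need not cancel and the difference is only $o(n^{-1/4})$; your expression is then only $o(n^{-1})$, not $\mathcal{O}(n^{-2})$ nor even the required $\mathcal{O}(n^{-3/2})$. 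The same objection applies to $([f_n]'(0))^2-([f_{n-2}]'(0))^2=\mathcal{O}(n^{1/4})$ in the odd case, where Lemma \ref{[Sec2]-LEMMA1} only gives $o(n^{5/4})$. Closing this requires asymptotics for $f_n(0)$ and $[f_n]'(0)$ with a quantified error term, i.e.\ enough regularity of $n\mapsto f_n(0)^2$ to differentiate the asymptotic expansion in $n$, which is not available from the lemmas as stated. To be fair, the paper's own proof silently skips exactly this point -- it asserts both rates immediately after the order estimates -- so your attempt at least makes visible where the real difficulty sits.
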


\begin{proof}
In view of \eqref{five_term_Q} and \eqref{normas}, we need estimates for $%
\lim_{n\rightarrow \infty }\mathcal{A}_{10}(n)$ and $\lim_{n\rightarrow
\infty }\mathcal{B}_{11}(n)$. It is easy to show that $\mathcal{A}%
_{10}(2n)=0 $, and for the odd case we have%
\begin{equation*}
\mathcal{A}_{10}(2n+1)=-\frac{M_{1}[Q_{2n+1}]^{\prime }(0)F_{2n}(0)}{%
||F_{2n}||^{2}}=-\frac{M_{1}\left( \frac{[F_{2n+1}]^{\prime }(0)}{%
1+M_{1}K_{2n-1}^{(1,1)}(0,0)}\right) F_{2n}(0)}{||F_{2n}||^{2}}=-\frac{%
M_{1}K_{2n}(0,0)}{1+M_{1}K_{2n-1}^{(1,1)}(0,0)},
\end{equation*}%
where the second equality follows from \eqref{[Sec2]-Qn(0)Monic} and the
third equality from the confluent expression \eqref{[Sec2]-K00cc}. As a
consequence, using Lemma \ref{[Sec2]-LEMMA2}, we get $\mathcal{A}_{10}(2n+1)=%
\mathcal{O}(n^{-3/2})$. On the other side, for the even case, 
\begin{equation*}
\mathcal{B}_{11}(2n)=\frac{M_{0}Q_{2n}(0)F_{2n}(0)}{||F_{2n-1}||^{2}}=\frac{%
M_{0}F_{2n}^{2}(0)}{||F_{2n-1}||^{2}(1+M_{0}K_{2n-2}(0,0))}=\frac{M_{0}\Vert
F_{2n}\Vert ^{2}f_{2n}^{2}(0)}{\Vert F_{2n-1}\Vert ^{2}(1+M_{0}K_{2n-2}(0,0))%
},
\end{equation*}%
where we have used \eqref{[Sec2]-Qn(0)Monic} for the second equality and the
fact that $F_{2n}^{2}(0)=\Vert F_{2n}\Vert
^{2}(K_{2n}(0,0)-K_{2n-1}(0,0))=\Vert F_{2n}\Vert ^{2}f_{2n}^{2}(0)$ for the
third equality. Since $\Vert F_{2n}\Vert ^{2}/\Vert F_{2n-1}\Vert
^{2}=a_{2n}^{2}$, and by using \ref{[Sec1]-LewQuarles}, and Lemmas \ref%
{[Sec2]-LEMMA1} and \ref{[Sec2]-LEMMA2}, we obtain $\mathcal{B}_{11}(2n)=%
\mathcal{O}(n^{-1/2})$. Finally, for the odd case, in a similar way we have 
\begin{equation*}
\mathcal{B}_{11}(2n+1)=\frac{M_{1}([F_{2n+1}]^{\prime }(0))^{2}}{%
||F_{2n}||^{2}(1+M_{1}K_{2n-1}^{(1,1)}(0,0))}=\frac{M_{1}\Vert F_{2n+1}\Vert
^{2}(f_{2n+1}^{\prime }(0))^{2}}{\Vert F_{2n}\Vert
^{2}(1+M_{1}K_{2n-1}^{(1,1)}(0,0))},
\end{equation*}%
and again from \eqref{[Sec1]-LewQuarles}, and Lemmas \ref{[Sec2]-LEMMA1} and %
\ref{[Sec2]-LEMMA2}, we get $\mathcal{B}_{11}(2n+1)=\mathcal{O}(n^{-1/2})$.
As a consequence, we have%
\begin{equation*}
\lim_{n\rightarrow \infty }\frac{\lambda _{n,n}}{a_{n+1}^{2}+a_{n}^{2}}%
=\lim_{n\rightarrow \infty }\frac{\left[ a_{n+1}^{2}+a_{n}^{2}+\mathcal{A}%
_{10}(n)+\mathcal{B}_{11}(n)\right] \frac{||F_{n}||^{2}}{||Q_{n}||_{1}^{2}}+%
\mathcal{A}_{10}(n)+\mathcal{B}_{11}(n),}{a_{n+1}^{2}+a_{n}^{2}}=1+\mathcal{O%
}(n^{-2}),
\end{equation*}%
and%
\begin{equation*}
\lim_{n\rightarrow \infty }\frac{\lambda _{n,n-2}}{a_{n-1}^{2}a_{n}^{2}}=%
\frac{a_{n-1}^{2}\left[ a_{n}^{2}+\mathcal{B}_{11}(n)\right] \frac{%
||F_{n-2}||^{2}}{||Q_{n-2}||_{1}^{2}}.}{a_{n-1}^{2}a_{n}^{2}}=1+\mathcal{O}%
(n^{-3/2}).
\end{equation*}
\end{proof}

%\begin{proposition}
%\label{[Sec2]-PROP2}The following asymptotic properties hold
%\begin{eqnarray*}
%q_{2n}(0) &=&? \\
%\lbrack q_{2n+1}]^{\prime }(0) &=&?.
%\end{eqnarray*}%
%AQUI QUIERO OBTENER PROPIEDADES ASINTÓTICAS PARA $q_{2n}(0)$ Y $%
%[q_{2n+1}]^{\prime }(0)$ UTILIZANDO LOS LEMAS \ref{[Sec2]-LEMMA1} Y \ref%
%{[Sec2]-LEMMA2}. EN PROGRESO...FALTA POR TERMINAR
%\end{proposition}

%%%%%%%%%%%%%%%%%%%%%%%%%%%%%%%%%%%%%%%%%%%%%%%%%%%%%%%%%%%%%%%%%%%%%%%%%%%%%%%%%%%%%%%%%%%%%%%%%%

\section{The Zeros}

\label{[Sec-6]-Zeros}

%%%%%%%%%%%%%%%%%%%%%%%%%%%%%%%%%%%%%%%%%%%%%%%%%%%%%%%%%%%%%%%%%%%%%%%%%%%%%%%%%%%%%%%%%%%%%%%%%%

%%%%%%%%%%%%%%%%%%%%%%%%%%%%%%%%%%%%%%%%%%%%%%%%%%%%%%%%%%%%%%%%%%%%%%%%%%%%%%%%%%%%%%%%%%%%%%%%%%

In this Section we analyze some properties of the zeros of the polynomials $%
\{Q_{n}(x)\}_{n\geq 0}$.

\bigskip

%%%%%%%%%%%%%%%%%%%%%%%%%%%%%%%%%%%%%%%%%%%%%%%%%%%%%%%%%%%%%%%%%%%%%%%%%%%%%%%%%%%%%%%%%%%%%%%%%%

%%%%%%%%%%%%%%%%%%%%%%%%%%%%%%%%%%%%%%%%%%%%%%%%%%%%%%%%%%%%%%%%%%%%%%%%%%%%%%%%%%%%%%%%%%%%%%%%%%

\subsection{Interlacing rupture}

%%%%%%%%%%%%%%%%%%%%%%%%%%%%%%%%%%%%%%%%%%%%%%%%%%%%%%%%%%%%%%%%%%%%%%%%%%%%%%%%%%%%%%%%%%%%%%%%%%

%From (\ref{FormConex-0}) we have%
%\begin{equation*}
%Q_{n}(x)=F_{n}\left( x\right)
%-M_{0}Q_{n}(0)K_{n-1}(x,0)-M_{1}[Q_{n}]^{\prime }(0)K_{n-1}^{(0,1)}(x,0),
%\end{equation*}%
%where $Q_{n}(0)$ and $[Q_{n}]^{\prime }(0)$ are computed un Section \ref%
%{[Sec-2]-ConnForm}. Shifting the index, and taking into account Lemmas \ref%
%{[Sec2]-LEMMA1} and \ref{[Sec2]-LEMMA2}, we obtain the alternative
%connection formulas%
%\begin{eqnarray}
%Q_{2n}(x) &=&F_{2n}\left( x\right) -M_{0}\frac{F_{2n}(0)}{%
%1+M_{0}K_{2n-1}(0,0)}K_{2n-1}(x,0),  \label{ConnFoEven1} \\
%Q_{2n+1}(x) &=&F_{2n+1}\left( x\right) -M_{1}\frac{[F_{2n+1}]^{\prime }(0)}{%
%1+M_{1}K_{2n}^{(1,1)}(0,0)}K_{2n}^{(0,1)}(x,0).  \label{ConnFoOdd1}
%\end{eqnarray}%
%\bigskip

From (\ref{[Sec2]-Prop21iKer}) and (\ref{[Sec2]-Prop21iiKer}), it is clear
that the zeros of even $Q_{2n}(x)$ and odd $Q_{2n+1}(x)$ Freud-Sobolev type
polynomials act in an independent way. From those expressions, we observe
that the variation of $M_{0}$\ (respectively $M_{1}$) exclusively influences
the position of the zeros of $Q_{2n}(x)$\ (respectively $Q_{2n+1}(x)$)
without affecting the zeros of $Q_{2n+1}(x)$\ (respectively $Q_{2n}(x)$).
This interesting phenomena leads to the destruction of the zero interlacing
for two consecutive polynomials of the sequence $\{Q_{n}(x)\}_{n\geq 0}$ for
certain values of $M_{0}$ and $M_{1}$. Notice that the zeros of $Q_{n}(x),
n\geq 1,$ are real and simple (see \cite{MPP-RdM92}, Proposition 3.2). In
the next two tables we provide numerical evidence that supports this fact.
In the sequel, let $\left\{ \eta _{n,k}\right\} _{k=0}^{n}\equiv \eta
_{n,1}<\eta _{n,2}<...<\eta _{n,n}$ be the zeros of $Q_{n}(x)$ and $\left\{
x_{n,k}\right\} _{k=0}^{n}$ be the zeros of $F_{n}(x)$ arranged in an
increasing order. Next we show the position of the second zero of the
Freud-Sobolev-type polynomial of degree $n=4$ (namely $Q_{4}(x)$) and the
second and third zeros of $Q_{5}(x)$ for some choices of the masses $M_{0}$
and $M_{1}$. For $M_{0}=M_{1}=0$ we obviously recover the corresponding
zeros of the Freud polynomials. The first table shows the position of the
aforementioned zeros for $M_{0}=0$ and several values for $M_{1}$. The cases
when between the second and third (resp. third and fourth) zeros of $%
Q_{5}(x) $ there are no zeros of $Q_{4}(x),$ i.e. the zero interlacing for
the sequence $\{Q_{n}(x)\}_{n\geq0}$ fails, are shown in bold.

%%%%%%%%%%%%%%%%%%%%%%%%%%%%%%%%%%%%%%%%%%%%%%%%%%%%%%%%%%%%%%%%%%%%%%%%%%%%%%%%%%%%%%%%%%%%%%%%%%

\begin{table}[!ht]
\centering\renewcommand{\arraystretch}{1.2} 
\begin{tabular}{rlrllr}
\hline
& $M_{0}=0.0$ &  &  &  &  \\ \cline{2-6}
& \multicolumn{1}{r}{$\eta _{5,2}$} & $\eta _{4,2}$ & \multicolumn{1}{r}{$%
\eta _{5,3}$} & \multicolumn{1}{r}{$\eta _{4,3}$} & $\eta _{5,4}$ \\ \hline
$M_{1}=0.0$ & \multicolumn{1}{r}{$-0.655248$} & $-0.39615$ & 
\multicolumn{1}{r}{$0.0$} & \multicolumn{1}{r}{$0.39615$} & $0.655248$ \\ 
$M_{1}=0.2$ & \multicolumn{1}{r}{$-0.458455$} & $-0.39615$ & 
\multicolumn{1}{r}{$0.0$} & \multicolumn{1}{r}{$0.39615$} & $0.458455$ \\ 
$M_{1}=0.4$ & \multicolumn{1}{r}{$\mathbf{-0.371898}$} & $-0.39615$ & 
\multicolumn{1}{r}{$0.0$} & \multicolumn{1}{r}{$0.39615$} & $\mathbf{0.371898%
}$ \\ 
$M_{1}=1.0$ & \multicolumn{1}{r}{$\mathbf{-0.261023}$} & $-0.39615$ & 
\multicolumn{1}{r}{$0.0$} & \multicolumn{1}{r}{$0.39615$} & $\mathbf{0.261023%
}$ \\ \hline
\end{tabular}%
\caption{Zeros of $Q_{5}(x)$ and $Q_{4}(x)$ for fixed $M_{0}=0.0$ and some
values of $M_{1}$.}
\label{Tabla1}
\end{table}

%%%%%%%%%%%%%%%%%%%%%%%%%%%%%%%%%%%%%%%%%%%%%%%%%%%%%%%%%%%%%%%%%%%%%%%%%%%%%%%%%%%%%%%%%%%%%%%%%%

%%%%%%%%%%%%%%%%%%%%%%%%%%%%%%%%%%%%%%%%%%%%%%%%%%%%%%%%%%%%%%%%%%%%%%%%%%%%%%%%%%%%%%%%%%%%%%%%%%

%%%%%%%%%%%%%%%%%%%%%%%%%%%%%%%%%%%%%%%%%%%%%%%%%%%%%%%%%%%%%%%%%%%%%%%%%%%%%%%%%%%%%%%%%%%%%%%%%%

%%%%%%%%%%%%%%%%%%%%%%%%%%%%%%%%%%%%%%%%%%%%%%%%%%%%%%%%%%%%%%%%%%%%%%%%%%%%%%%%%%%%%%%%%%%%%%%%%%

\begin{table}[!ht]
\centering\renewcommand{\arraystretch}{1.2} 
\begin{tabular}{rlrllr}
\hline
& $M_{0}=1.0$ &  &  &  &  \\ \cline{2-6}
& \multicolumn{1}{r}{$\eta _{5,2}$} & $\eta _{4,2}$ & \multicolumn{1}{r}{$%
\eta _{5,3}$} & \multicolumn{1}{r}{$\eta _{4,3}$} & $\eta _{5,4}$ \\ \hline
$M_{1}=0.0$ & \multicolumn{1}{r}{$-0.655248$} & $-0.284325$ & 
\multicolumn{1}{r}{$0.0$} & \multicolumn{1}{r}{$0.284325$} & $0.655248$ \\ 
$M_{1}=0.4$ & \multicolumn{1}{r}{$-0.371898$} & $-0.284325$ & 
\multicolumn{1}{r}{$0.0$} & \multicolumn{1}{r}{$0.284325$} & $0.371898$ \\ 
$M_{1}=0.9$ & \multicolumn{1}{r}{$\mathbf{-0.272822}$} & $-0.284325$ & 
\multicolumn{1}{r}{$0.0$} & \multicolumn{1}{r}{$0.284325$} & $\mathbf{%
0.272822}$ \\ 
$M_{1}=2.0$ & \multicolumn{1}{r}{$\mathbf{-0.192081}$} & $-0.284325$ & 
\multicolumn{1}{r}{$0.0$} & \multicolumn{1}{r}{$0.284325$} & $\mathbf{%
0.192081}$ \\ \hline
\end{tabular}%
\caption{Zeros of $Q_{5}(x)$ and $Q_{4}(x)$ for fixed $M_{0}=1.0$ and some
values of $M_{1}$.}
\label{Tabla2}
\end{table}

%%%%%%%%%%%%%%%%%%%%%%%%%%%%%%%%%%%%%%%%%%%%%%%%%%%%%%%%%%%%%%%%%%%%%%%%%%%%%%%%%%%%%%%%%%%%%%%%%%

Observe that, as expected, the variation of $M_{1}$ only affects the
position of $\eta _{5,2}$ and $\eta _{5,4}$ and the variation of $M_{0}$
only affects the position of $\eta _{4,2}$ and $\eta _{4,4}$. This numerical
example is also reflected in Figure \ref{[S4]-FigRupture}.

\begin{figure}[!ht]
\centerline{\includegraphics[width=11cm,keepaspectratio]{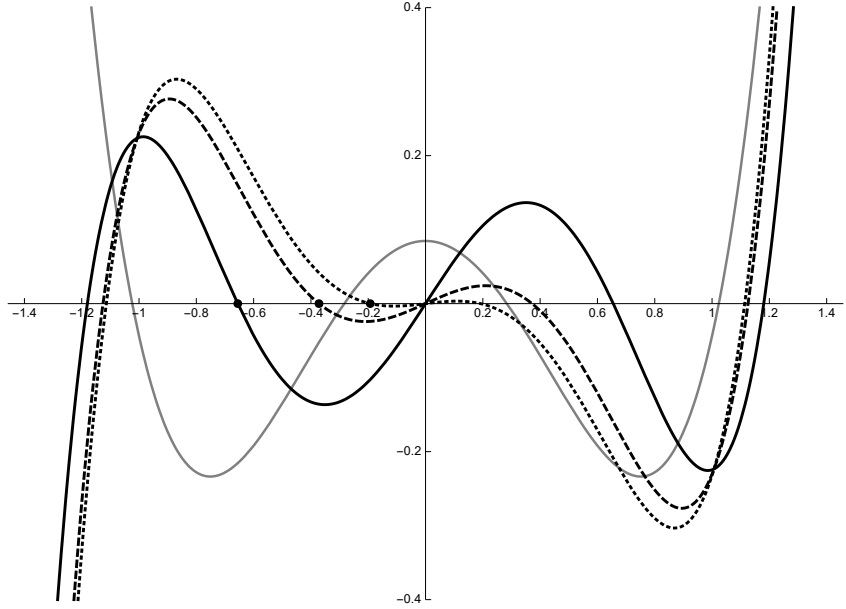}}
\caption{The figure shows, for a fixed value $M_{0}=1$, the evolution of the
second zero of the Freud-Sobolev type polynomial $Q_{5}(x)$ for three
different values of the mass $M_{1}$. The curve in gray color represents the
Freud-Sobolev type $Q_{4}(x)$, which is not affected by the variation of $%
M_{1}$. The zero of $Q_{5}(x;M_{1}=0)=F_{5}(x)$ (continuous black graph)
occurs at $\protect\eta _{5,2}(M_{1}=0)=-0.655248$. For $Q_{5}(x;M_{1}=0.2)$
(dashed line) we have $\protect\eta _{5,2}(M_{1}=0.2)=-0.371898$ and for $%
Q_{5}(x;M_{1}=2)$ (dotted line) occurs at $\protect\eta%
_{5,2}(M_{1}=2)=-0.19208$. Notice that for $M_{0}=1$ and $M_{1}=2$ there is
no zero of the polynomial $Q_{4}(x)$ between the second ($\protect\eta%
_{5,2}(M_{1}=2)=-0.19208$) and third ($\protect\eta_{5,3}(M_{1}=2)=0$) roots
of $Q_{5}(x;M_{1}=2)$, so the interlacing of the complete Freud-Sobolev type
orthogonal polynomial sequence $\{Q_{n}(x)\}_{n\geq 0}$ has been broken.}
\label{[S4]-FigRupture}
\end{figure}

%%%%%%%%%%%%%%%%%%%%%%%%%%%%%%%%%%%%%%%%%%%%%%%%%%%%%%%%%%%%%%%%%%%%%%%%%%%%%%%%%%%%%%%%%%%%%%%%%%

%%%%%%%%%%%%%%%%%%%%%%%%%%%%%%%%%%%%%%%%%%%%%%%%%%%%%%%%%%%%%%%%%%%%%%%%%%%%%%%%%%%%%%%%%%%%%%%%%%

\subsection{Asymptotic behavior}

%%%%%%%%%%%%%%%%%%%%%%%%%%%%%%%%%%%%%%%%%%%%%%%%%%%%%%%%%%%%%%%%%%%%%%%%%%%%%%%%%%%%%%%%%%%%%%%%%%

%%%%%%%%%%%%%%%%%%%%%%%%%%%%%%%%%%%%%%%%%%%%%%%%%%%%%%%%%%%%%%%%%%%%%%%%%%%%%%%%%%%%%%%%%%%%%%%%%%

We are interested in the dynamics of the zeros of the Freud-Sobolev type
when $M_{0}$ and $M_{1}$ tend, respectively, to infinity. To that end, let
us introduce the following the limit polynomials 
\begin{eqnarray}
G_{2n}(x) &=&\lim_{M_{0}\rightarrow \infty }Q_{2n}(x)=F_{2n}(x)-\frac{%
F_{2n}(0)}{K_{2n-2}(0,0)}K_{2n-2}(x,0),  \notag \\
J_{2n+1}(x) &=&\lim_{M_{1}\rightarrow \infty }Q_{2n+1}(x)=F_{2n+1}\left(
x\right) -\frac{[F_{2n+1}]^{\prime }(0)}{K_{2n-1}^{(1,1)}(0,0)}%
K_{2n-1}^{(0,1)}(x,0).  \label{JPoly-1}
\end{eqnarray}%
Similar polynomials have been previously studied in \cite{MPP-RdM92}, when
the discrete mass points are located outside the support of the perturbed
measure. Here, we find a slightly different situation because the support of
the measure is the whole real line and the discrete masses $M_{0}$ and $%
M_{1} $ are both located at $x=0\in \mathbb{R}$. As stated before, $M_{0}$
only affects the even degree polynomials, and the dynamics for the zeros of $%
\{Q_{2n}(x)\}_{n\geq 0}$ has been already obtained in \cite{AHM-AMC16}.
Next, we extend those results for the odd sequence $\{Q_{2n+1}(x)\}_{n\geq
0} $.

Our goal is to obtain results concerning the monotonicity and speed of
convergence of the zeros of\ $Q_{2n+1}(x)$. For this purpose we need the
following lemma concerning the behavior and the asymptotics of the zeros of
linear combinations of two polynomials with interlacing zeros, whose proof
we omit (see \cite[Lemma 1]{BDR-JCAM02} or \cite[Lemma 3]{DMR-ANM10}).

\begin{lemma}
\label{LEMMA-3} Let $\mathfrak{f}_{n}(x)=a(x-x_{1})\cdots (x-x_{n})$ and $%
\mathfrak{j}_{n}(x)=b(x-y_{1})\cdots (x-y_{n})$ be polynomials with real and
simple zeros, where $a$ and $b$ are positive real constants.

If%
\begin{equation*}
y_{1}<x_{1}<\cdots <y_{n}<x_{n},
\end{equation*}%
then, for any real constant $c>0$, the polynomial%
\begin{equation*}
\mathfrak{q}_{n}(x)=\mathfrak{f}_{n}(x)+c\mathfrak{j}_{n}(x)
\end{equation*}%
has $n$ real zeros $\eta _{1}<\cdots <\eta _{n}$ which interlace with the
zeros of $\mathfrak{f}_{n}(x)$ and $\mathfrak{j}_{n}(x)$ as follows%
\begin{equation*}
y_{1}<\eta _{1}<x_{1}<\cdots <y_{n}<\eta _{n}<x_{n}.
\end{equation*}%
Moreover, each $\eta _{k}=\eta _{k}(c)$ is a decreasing function of $c$ and,
for each $k=1,\ldots ,n$,%
\begin{equation*}
\lim_{c\rightarrow \infty }\eta _{k}=y_{k}\quad \text{and}\quad
\lim_{c\rightarrow \infty }c[\eta _{k}-y_{k}]=\dfrac{-\mathfrak{f}_{n}(y_{k})%
}{\mathfrak{j}_{n}^{\prime }(y_{k})}.
\end{equation*}
\end{lemma}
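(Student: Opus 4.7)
The plan is to establish the three claims (interlacing, monotonicity in $c$, and asymptotic speed) in order, each following from an elementary analytic argument based on sign patterns and implicit differentiation, with no heavy machinery needed.

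First, to show that $\mathfrak{q}_n$ has exactly $n$ real zeros interlacing as stated, I would study the sign pattern of $\mathfrak{q}_n$ at the nodes $y_1,x_1,y_2,x_2,\ldots,y_n,x_n$. Since $\mathfrak{j}_n(y_k)=0$ we get $\mathfrak{q}_n(y_k)=\mathfrak{f}_n(y_k)$, and since $\mathfrak{f}_n(x_k)=0$ we get $\mathfrak{q}_n(x_k)=c\,\mathfrak{j}_n(x_k)$. The interlacing $y_1<x_1<\cdots<y_n<x_n$ forces $\operatorname{sgn}\mathfrak{f}_n(y_k)=(-1)^{n-k+1}$ and $\operatorname{sgn}\mathfrak{j}_n(x_k)=(-1)^{n-k}$, since each $y_k$ lies strictly to the left of $x_k,x_{k+1},\ldots,x_n$ and $c>0$. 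Hence $\mathfrak{q}_n(y_k)$ and $\mathfrak{q}_n(x_k)$ have opposite signs, so by the intermediate value theorem $\mathfrak{q}_n$ has a zero $\eta_k\in(y_k,x_k)$; since the polynomial has degree $n$ and we have produced $n$ disjoint sign changes, these account for all zeros.

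Next, to prove the monotonicity $\eta_k'(c)<0$, I would differentiate the identity $\mathfrak{f}_n(\eta_k(c))+c\,\mathfrak{j}_n(\eta_k(c))=0$ with respect to $c$ to obtain
\begin{equation*}
\eta_k'(c)=-\frac{\mathfrak{j}_n(\eta_k)}{\mathfrak{q}_n'(\eta_k)}.
\end{equation*}
Since $\eta_k\in(y_k,x_k)$, only the factor $(x-y_k)$ of $\mathfrak{j}_n$ changes sign as one passes through $y_k$, while all other $(x-y_j)$ keep the sign they had at $x_k$. A quick tally shows $\operatorname{sgn}\mathfrak{j}_n(\eta_k)=\operatorname{sgn}\mathfrak{j}_n(x_k)=(-1)^{n-k}$. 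On the other hand, $\mathfrak{q}_n'(\eta_k)$ must have the same sign as $\mathfrak{q}_n$ immediately to the right of $\eta_k$, namely the sign of $\mathfrak{q}_n(x_k)=c\,\mathfrak{j}_n(x_k)$, which is also $(-1)^{n-k}$. Therefore the ratio is positive and $\eta_k'(c)<0$, giving monotone decrease.

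Finally, for the asymptotic statement I would observe that the bound $y_k<\eta_k(c)<x_k$ combined with monotonicity forces $\eta_k(c)$ to converge to some limit $\ell_k\in[y_k,x_k)$; passing to the limit in $\mathfrak{f}_n(\eta_k)+c\,\mathfrak{j}_n(\eta_k)=0$ after dividing by $c$ shows $\mathfrak{j}_n(\ell_k)=0$, so $\ell_k=y_k$. To get the rate, set $\varepsilon_k(c):=\eta_k(c)-y_k\to 0^+$ and Taylor expand:
\begin{equation*}
0=\mathfrak{f}_n(y_k)+O(\varepsilon_k)+c\bigl[\mathfrak{j}_n'(y_k)\varepsilon_k+O(\varepsilon_k^2)\bigr],
\end{equation*}
and solving for $c\varepsilon_k$ yields $c\varepsilon_k\to -\mathfrak{f}_n(y_k)/\mathfrak{j}_n'(y_k)$, as claimed. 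The only mildly subtle step is making sure the error terms are genuinely $o(1)$ after multiplication by $c$, which follows because $\varepsilon_k=O(1/c)$, so $c\,O(\varepsilon_k^2)=O(1/c)\to 0$. I expect the sign-tracking in the interlacing and monotonicity parts to be the most error-prone step, but once the parity counts are set up carefully the rest is routine.
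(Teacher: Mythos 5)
Your argument is correct: the sign counts $\operatorname{sgn}\mathfrak{f}_{n}(y_{k})=(-1)^{n-k+1}$ and $\operatorname{sgn}\mathfrak{j}_{n}(x_{k})=(-1)^{n-k}$ are right, the implicit-differentiation formula $\eta_{k}'(c)=-\mathfrak{j}_{n}(\eta_{k})/\mathfrak{q}_{n}'(\eta_{k})$ together with the parity of the two factors gives the monotonicity, and the limit and rate follow as you describe (the assertion $\varepsilon_{k}=O(1/c)$, which you use to kill the error term, is justified by writing $c=-\mathfrak{f}_{n}(\eta_{k})/\mathfrak{j}_{n}(\eta_{k})$ and using that $y_{k}$ is a simple zero of $\mathfrak{j}_{n}$ with $\mathfrak{f}_{n}(y_{k})\neq 0$). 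The paper itself gives no proof of this lemma --- it is stated with the proof explicitly omitted and attributed to Bracciali--Dimitrov--Sri Ranga and Dimitrov--Mello--Rafaeli --- and your sign-pattern plus implicit-function argument is essentially the standard one appearing in those references.
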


Before stating the main result of this Section, we will prove some auxiliary
results concerning the interlacing properties of $\{F_{2n+1}\}_{n\geq 0}$, $%
\{K_{2n-1}^{(0,1)}(x,0)\}_{n\geq 0}$, and $\{J_{2n+1}\}_{n\geq 0}$.

\begin{lemma}
The zeros of $\{K_{2n+1}^{(0,1)}(x,0)\}_{n\geq0}$, are real and simple.
Moreover, for every $n\geq1$, the non vanishing zeros of $%
K_{2n+1}^{(0,1)}(x,0)$ and $K_{2n-1}^{(0,1)}(x,0)$ interlace.
\end{lemma}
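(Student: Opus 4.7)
The plan is to exploit the symmetry of the Freud weight in order to represent $K_{2n+1}^{(0,1)}(x,0)$ as, up to a factor of $x$, an orthogonal polynomial for a Christoffel-type transformation of a measure on $[0,\infty)$; the conclusion will then follow from the classical properties of MOPS.

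First, I would use that $F_{2k}$ is even, so $[F_{2k}]'(0)=0$, which collapses the kernel to
\begin{equation*}
K_{2n+1}^{(0,1)}(x,0)=\sum_{j=0}^{n}\frac{F_{2j+1}(x)\,[F_{2j+1}]'(0)}{\|F_{2j+1}\|^{2}}.
\end{equation*}
Writing $F_{2j+1}(x)=x\,\tilde{F}_{j}(x^{2})$ and performing the change of variables $y=x^{2}$, a direct computation shows that $\{\tilde{F}_{j}\}_{j\geq 0}$ is the monic OPS for the positive Borel measure $d\nu(y)=\sqrt{y}\,e^{-y^{2}}dy$ on $[0,\infty)$, with $\|\tilde{F}_{j}\|_{\nu}^{2}=\|F_{2j+1}\|^{2}$ and $\tilde{F}_{j}(0)=[F_{2j+1}]'(0)$. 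Hence
\begin{equation*}
K_{2n+1}^{(0,1)}(x,0)=x\,\tilde{K}_{n}(x^{2},0),
\end{equation*}
where $\tilde{K}_{n}$ is the reproducing kernel associated with $\{\tilde{F}_{j}\}$.

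The next step is to identify $\tilde{K}_{n}(\cdot,0)$ via the classical Christoffel transformation. Set $d\nu^{*}(y)=y\,d\nu(y)=y^{3/2}e^{-y^{2}}dy$, a positive measure on $[0,\infty)$. For any polynomial $r$ with $\deg r<n$, the reproducing property of $\tilde{K}_{n}(\cdot,0)$ yields
\begin{equation*}
\int_{0}^{\infty}\tilde{K}_{n}(y,0)\,r(y)\,d\nu^{*}(y)=\int_{0}^{\infty}\tilde{K}_{n}(y,0)\cdot\bigl[y\,r(y)\bigr]\,d\nu(y)=\bigl[y\,r(y)\bigr]_{y=0}=0.
\end{equation*}
Since $\tilde{F}_{j}(0)=[F_{2j+1}]'(0)\neq 0$ by Lemma \ref{[Sec2]-LEMMA1}, the leading coefficient of $\tilde{K}_{n}(\cdot,0)$ is nonzero; it therefore has degree exactly $n$ and is a nonzero scalar multiple of the $n$-th monic orthogonal polynomial for $\nu^{*}$. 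This immediately yields that $\tilde{K}_{n}(y,0)$ has $n$ simple real zeros $0<\tilde{\eta}_{1}^{(n)}<\cdots<\tilde{\eta}_{n}^{(n)}$ in the interior of $\mathrm{supp}(\nu^{*})$, and the classical interlacing of consecutive MOPS zeros gives $\tilde{\eta}_{j}^{(n)}<\tilde{\eta}_{j}^{(n-1)}<\tilde{\eta}_{j+1}^{(n)}$ for $1\leq j\leq n-1$.

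Finally I would transport these conclusions back to the $x$-variable. The zeros of $K_{2n+1}^{(0,1)}(x,0)=x\,\tilde{K}_{n}(x^{2},0)$ are $0$ and $\pm\sqrt{\tilde{\eta}_{j}^{(n)}}$, $1\leq j\leq n$, so they are all real and simple. The interlacing of the $\tilde{\eta}_{j}^{(n)}$ and $\tilde{\eta}_{j}^{(n-1)}$, carried across by the strictly increasing map $y\mapsto\sqrt{y}$, produces the claimed interlacing of the positive non-vanishing zeros of $K_{2n+1}^{(0,1)}(x,0)$ and $K_{2n-1}^{(0,1)}(x,0)$; the negative side follows by oddness. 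I expect the only step needing any care is the identification of $\tilde{K}_{n}(\cdot,0)$ with the Christoffel transform and, in particular, the verification of its exact degree via the non-vanishing supplied by Lemma \ref{[Sec2]-LEMMA1}; everything else is routine.
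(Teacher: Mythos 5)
Your proof is correct and follows essentially the same route as the paper: both write $K_{2n+1}^{(0,1)}(x,0)=x\,s_{n}(x^{2})$ and show, via the reproducing property, that $s_{n}$ is (proportional to) the $n$-th orthogonal polynomial for the positive measure $y^{3/2}e^{-y^{2}}dy$ on $[0,\infty)$, after which simplicity and interlacing are classical. Your detour through the symmetrized family $\tilde{F}_{j}$ and the Christoffel transform is just a repackaging of the paper's direct computation (and lets you skip the paper's explicit verification that $\int_{0}^{\infty}s_{n}^{2}\,d\sigma>0$); note only that the nonvanishing of $[F_{2n+1}]'(0)$ is better justified by the simplicity of the zeros of $F_{2n+1}$ than by the asymptotic estimate of Lemma \ref{[Sec2]-LEMMA1}.
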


\begin{proof}
First, since $K_{2n-1}^{(0,1)}(x,0)$ is an odd polynomial, we can write $%
K_{2n+1}^{(0,1)}(x,0)=xs_{n}(x^2)$, where $s_{n}$ is a polynomial of degree $%
n$. We will prove that $\{s_n(y)\}_{n\geq 0}$, with $y=x^2$, is an
orthogonal polynomial sequence with respect to the measure $%
d\sigma(y)=y^{3/2}e^{-y^2}dy$, which is positive in the positive real line.
Indeed, for $n\neq m$, we have 
\begin{eqnarray*}
\int_0^\infty s_n(y)s_m(y)d\sigma(y)&=&\int_{-\infty}^{\infty}\frac{%
K_{2n+1}^{(0,1)}(x,0)}{x}\frac{K_{2m+1}^{(0,1)}(x,0)}{x}x^3e^{-x^4}(2xdx) \\
&=&2\int_{-\infty}^{%
\infty}K_{2n+1}^{(0,1)}(x,0)K_{2m+1}^{(0,1)}(x,0)x^2e^{-x^4}dx \\
&=&0,
\end{eqnarray*}
by using the reproducing property of $K_{2n-1}^{(0,1)}(x,0)$. On the other
hand, for $n=m$, and taking into account \eqref{[Sec1]-Knxy01der-P} and the
symmetry of the Freud polynomials, we get 
\begin{eqnarray*}
\int_0^\infty
s_n^2(y)d\sigma(y)&=&\int_{-\infty}^{%
\infty}K_{2n+1}^{(0,1)}(x,0)K_{2n+1}^{(0,1)}(x,0)x^2e^{-x^4}dx \\
&=&\int_{-\infty}^{\infty}K_{2n+1}^{(0,1)}(x,0)\frac{
xF_{2n+2}(x)[F_{2n+1}]^{\prime}(0)-F_{2n+1}(x)F_{2n+2}(0)}{\|F_{2n+1}\|^2}%
e^{-x^4}dx \\
&=&\frac{1}{\|F_{2n+1}\|^2}\left(([F_{2n-1}]^{\prime}(0))^2\|F_{2n+2}%
\|^2-[F_{2n+1}]^{\prime}(0)F_{2n+2}(0)\right)>0,
\end{eqnarray*}
since $[F_{2n+1}]^{\prime}(0)F_{2n+2}(0)<0$. As a consequence, the zeros of $%
s_n(x)$ are real, simple, and they are located in the positive real
semiaxis. Moreover, the zeros of $s_{n}(x)$ and $s_{n-1}(x)$ interlace. Now,
because of the symmetry, all polynomials of the sequence $%
\{K_{2n+1}^{(0,1)}(x,0)\}_{n\geq0} $ have a zero at the origin, and the
remaining zeros are located symmetrically at both sides of the origin.
Furthermore, if we denote by $s_{n,k}$ the $k$th zero of $s_n(x)$, then it
is clear from the definition that $\pm\sqrt{s_{n,k}}$ are zeros of $%
K_{2n+1}^{(0,1)}(x,0)$. As a consequence, the (non vanishing) zeros of $%
K_{2n+1}^{(0,1)}(x,0)$ and $K_{2n-1}^{(0,1)}(x,0)$ interlace.
\end{proof}

The next Lemma shows that the non vanishing zeros of $F_{2n+1}$ and $%
K_{2n-1}^{(0,1)}(x,0)$ also interlace.

\begin{lemma}
\label{Lema_zeros_F_K} Let $\{x_{2n+1,k}\}_{k=1}^{2n+1}$ and $%
\{z_{2n-1,k}\}_{k=1}^{2n-1}$ be the set of zeros of $F_{2n+1}$ and $%
K_{2n-1}^{(0,1)}(x,0)$, respectively, arranged in increasing order. Then, we
have%
\begin{eqnarray*}
x_{2n+1,k} &<&z_{2n-1,k}<x_{2n+1,k+1},\quad 1\leq k\leq n-1, \\
x_{2n+1,k+1} &<&z_{2n-1,k}<x_{2n+1,k+2}\quad n+1\leq k\leq 2n-1.
\end{eqnarray*}
\end{lemma}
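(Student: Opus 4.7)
The plan is to exploit the odd symmetry of both $F_{2n+1}(x)$ and $K_{2n-1}^{(0,1)}(x,0)$ via the substitution $y=x^{2}$, reducing the question to an interlacing statement on $(0,\infty)$, where classical results for Christoffel-transformed orthogonal polynomials apply directly.

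Writing $F_{2n+1}(x)=x\widetilde{F}_{n}(x^{2})$ (possible because $F_{2n+1}$ is odd), a change of variable $y=x^{2}$ in the orthogonality relations of $\{F_{2n+1}\}$, exactly as in the remark following Corollary~\ref{[Sec2]-PROP1}, shows that $\{\widetilde{F}_{n}\}_{n\geq 0}$ is the monic OPS with respect to $d\mu(y)=y^{1/2}e^{-y^{2}}dy$ on $[0,\infty)$. On the other hand, the proof of the previous lemma already gives $K_{2n-1}^{(0,1)}(x,0)=xs_{n-1}(x^{2})$, where $\{s_{n}\}_{n\geq 0}$ is the monic OPS with respect to $d\sigma(y)=y^{3/2}e^{-y^{2}}dy=y\,d\mu(y)$. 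Consequently, $s_{n-1}$ is precisely the Christoffel transform of $\widetilde{F}_{n}$ induced by multiplying the weight by $y$. The classical interlacing property for such a transformation (which can be proved from Christoffel's formula $y\,s_{n-1}(y)=\widetilde{F}_{n}(y)-[\widetilde{F}_{n}(0)/\widetilde{F}_{n-1}(0)]\widetilde{F}_{n-1}(y)$ combined with the interlacing of the zeros of $\widetilde{F}_{n}$ and $\widetilde{F}_{n-1}$) then yields that the $n-1$ zeros $0<w_{1}<\cdots<w_{n-1}$ of $s_{n-1}$ strictly interlace with the $n$ zeros $0<y_{1}<\cdots<y_{n}$ of $\widetilde{F}_{n}$, i.e. $y_{j}<w_{j}<y_{j+1}$ for every $j=1,\ldots,n-1$.

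Finally, the map $y\mapsto\sqrt{y}$ is increasing on $(0,\infty)$, so these inequalities transfer to the positive non-vanishing zeros of $F_{2n+1}$ and $K_{2n-1}^{(0,1)}(x,0)$: with $x_{2n+1,\,n+1+j}=\sqrt{y_{j}}$ and $z_{2n-1,\,n+j}=\sqrt{w_{j}}$, setting $k=n+j$ produces the second group of inequalities in the statement. By the odd symmetry of both polynomials the same interlacing (in reversed order) holds for the negative zeros; using $x_{2n+1,\,k}=-x_{2n+1,\,2n+2-k}$ together with $z_{2n-1,\,k}=-z_{2n-1,\,2n-k}$ recovers the first group. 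The main technical obstacle is precisely the bookkeeping of these indices, since the lemma uses a single global increasing ordering on $\mathbb{R}$, which forces different substitutions on either side of the origin; once this bookkeeping is handled, the interlacing itself reduces to a well-known property of Christoffel transformations on $[0,\infty)$.
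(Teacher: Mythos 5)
Your proof is correct, but it takes a genuinely different route from the paper's. The paper argues directly on the real line: it writes $x^{2}K_{2n-1}^{(0,1)}(x,0)$ in terms of $F_{2n}$ and $F_{2n-1}$ via the Christoffel--Darboux derivative formula, eliminates $F_{2n-1}$ with the three-term recurrence so that an $F_{2n+1}$ term appears, and evaluates at consecutive positive zeros of $F_{2n+1}$; what survives is $xF_{2n}(x)$ times a fixed constant, which alternates in sign by the classical interlacing of $F_{2n}$ and $F_{2n+1}$, forcing a sign change of $K_{2n-1}^{(0,1)}(\cdot,0)$ in each gap. You instead push the quadratic substitution $y=x^{2}$ (already exploited in the preceding lemma) all the way: identifying $\widetilde F_{n}$ and $s_{n-1}$ as the OPS for $y^{1/2}e^{-y^{2}}dy$ and $y\cdot y^{1/2}e^{-y^{2}}dy$ on $[0,\infty)$, you recognize $s_{n-1}$ as the kernel (Christoffel) polynomial of $\widetilde F_{n}$ at the origin and invoke the classical interlacing of an OPS with its Christoffel transform; your index bookkeeping ($k=n+j$ for the positive zeros, reflection for the negative ones) checks out. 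What your route buys: the full strict interlacing --- exactly one zero per gap --- in one stroke from a standard theorem, instead of combining ``at least one sign change per gap'' with a separate zero count; it also makes the subsequent remark transparent (no zeros of $K_{2n-1}^{(0,1)}(\cdot,0)$ in the two innermost gaps, since the kernel polynomial has one fewer zero and all of them are trapped between consecutive zeros of $\widetilde F_{n}$). What the paper's route buys: it is self-contained at the level of the recurrence relation and does not require quoting or re-deriving the Christoffel-transform interlacing theorem. One small point to make explicit if you write this up: the identity $y\,s_{n-1}(y)=\widetilde F_{n}(y)-\bigl[\widetilde F_{n}(0)/\widetilde F_{n-1}(0)\bigr]\widetilde F_{n-1}(y)$ requires $\widetilde F_{n-1}(0)\neq 0$, which holds because all zeros of $\widetilde F_{n-1}$ lie in the open interval $(0,\infty)$.
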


\begin{proof}
Due to the symmetry of both polynomials, it suffices to prove the
interlacing for the positive zeros. Since $x_{2n+1,n+1}=z_{2n-1,n}=0$, we
consider the case when $n+1\leq k\leq 2n-1$. From \eqref{[Sec1]-Knxy01der-P}
and the symmetry of the Freud polynomials, we have 
\begin{eqnarray*}
x^{2}K_{2n-1}^{(0,1)}(x,0) &=&\frac{1}{\Vert F_{2n-1}\Vert ^{2}}\left(
xF_{2n}(x)[F_{2n-1}]^{\prime }(0)-F_{2n-1}(x)F_{2n}(0)\right) \\
&=&xF_{2n}(x)[F_{2n-1}]^{\prime }(0)-\left( \frac{xF_{2n}(x)-F_{2n+1}(x)}{%
a_{2n}^{2}}\right) F_{2n}(0),
\end{eqnarray*}%
where we have used (\ref{3TRR-Monic}) on the second equality. As a
consequence, evaluating the previous equation in $x_{2n+1,k+1}$ and $%
x_{2n+1,k+2}$ we obtain, respectively, 
\begin{eqnarray*}
x_{2n+1,k+1}^{2}K_{2n-1}^{(0,1)}(x_{2n+1,k+1},0)
&=&x_{2n+1,k+1}F_{2n}(x_{2n+1,k+1})\left( [F_{2n-1}]^{\prime }(0)-\frac{%
F_{2n}(0)}{a_{2n}^{2}}\right) , \\
x_{2n+1,k+2}^{2}K_{2n-1}^{(0,1)}(x_{2n+1,k+2},0)
&=&x_{2n+1,k+2}F_{2n}(x_{2n+1,k+2})\left( [F_{2n-1}]^{\prime }(0)-\frac{%
F_{2n}(0)}{a_{2n}^{2}}\right) .
\end{eqnarray*}%
Since $x_{2n+1,k+1}$ and $x_{2n+1,k+2}$ are positive and the zeros of the
Freud polynomials interlace, $F_{2n}(x_{2n+1,k+1})$ and $%
F_{2n}(x_{2n+1,k+2}) $ have distinct sign. As a consequence, $%
K_{2n-1}^{(0,1)}(x_{2n+1,k+1},0)$ and $K_{2n-1}^{(0,1)}(x_{2n+1,k+2},0)$
differ in sign, which means that $K_{2n-1}^{(0,1)}(x,0)$ has a zero between
the zeros $x_{2n+1,k+1}$ and $x_{2n+1,k+2}$.
\end{proof}

\begin{remark}
Notice that $F_{2n+1}$ and $K_{2n-1}^{(0,1)}(x,0)$ differ in two degrees.
This causes that the zeros interlacing between them is not complete. Indeed, 
$K_{2n-1}^{(0,1)}(x,0)$ has not zeros in the interval $[x_{2n+1,n},
x_{2n+1,n+2}]$, i.e. between the origin and the first zeros of $F_{2n+1}(x)$
at both sides.
\end{remark}

We will need some results concerning the interlacing properties of the zeros
of $F_{2n+1}(x)$, $J_{2n+1}(x)$ and $Q_{2n+1}(x)$. By symmetry, for the
zeros of $F_{2n+1}(x)$, we have $x_{2n+1,n+1}=0$ and $%
x_{2n+1,k}=-x_{2n+1,2n+2-k}$ for $1\leq k\leq n$. As a consequence, it
suffices to analyze the behavior of the positive zeros. In order to simplify
the notation, we denote $x_{k}:=x_{2n+1, n+1+k}$, $1\leq k \leq n$, i.e. $%
\{x_k\}_{k=1}^{n}$ are the $n$ positive zeros of $F_{2n+1}$ arranged in
increasing order. A similar notation will be used for the zeros of $Q_{2n+1}$
and $F_{2n+1}$. The following result is a straightforward corollary of Lemma %
\ref{Lema_zeros_F_K}.

\begin{corollary}
Let us denote by $\{y_{k}\}_{k=1}^{n}$ the set of positive zeros of $%
J_{2n+1}(x)$ arranged in increasing order. Then, for $1\leq k\leq n-1$, we
have%
\begin{equation}
x_{k}<y_{k+1}<x_{k+1},  \label{interlacingFK}
\end{equation}%
i.e., positive zeros of $J_{2n+1}(x)$ and $F_{2n+1}(x)$ interlace.
\end{corollary}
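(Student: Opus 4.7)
The statement asserts interlacing of positive zeros of $J_{2n+1}(x)$ with those of $F_{2n+1}(x)$. Since $J_{2n+1}$ is an explicit linear combination of $F_{2n+1}$ and $K_{2n-1}^{(0,1)}(x,0)$, my plan is to evaluate $J_{2n+1}$ at the positive zeros $x_k$ of $F_{2n+1}$ and detect sign alternation via the interlacing already established in Lemma \ref{Lema_zeros_F_K}. The intermediate value theorem then produces a zero of $J_{2n+1}$ in each interval $(x_k, x_{k+1})$.

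Concretely, I would first rewrite \eqref{JPoly-1} as $J_{2n+1}(x) = F_{2n+1}(x) - \alpha_n\, K_{2n-1}^{(0,1)}(x,0)$, where $\alpha_n = [F_{2n+1}]'(0)/K_{2n-1}^{(1,1)}(0,0)$ is a nonzero constant (since $[F_{2n+1}]'(0)\neq 0$ by Lemma \ref{[Sec2]-LEMMA1}, and $K_{2n-1}^{(1,1)}(0,0)>0$). Evaluating at a positive zero $x_k$ of $F_{2n+1}$ yields
\begin{equation*}
J_{2n+1}(x_k) = -\alpha_n\, K_{2n-1}^{(0,1)}(x_k, 0).
\end{equation*}
So the sign of $J_{2n+1}(x_k)$ is dictated by the sign of $K_{2n-1}^{(0,1)}(x_k,0)$.

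Next I would translate Lemma \ref{Lema_zeros_F_K} into the simplified positive-zero notation introduced just before the corollary. Writing $\{z_k\}_{k=1}^{n-1}$ for the positive zeros of $K_{2n-1}^{(0,1)}(x,0)$ in increasing order, the lemma gives $x_k < z_k < x_{k+1}$ for $k=1,\ldots,n-1$. In particular, $K_{2n-1}^{(0,1)}(\cdot,0)$ has exactly one (simple) zero on each interval $(x_k, x_{k+1})$, so its values at the endpoints have opposite signs. Therefore $J_{2n+1}(x_k)$ and $J_{2n+1}(x_{k+1})$ have opposite signs for every $k=1,\ldots,n-1$, and by continuity $J_{2n+1}$ carries a zero in each interval $(x_k,x_{k+1})$. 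Labeling these zeros in increasing order gives the claimed inequalities $x_k < y_{k+1} < x_{k+1}$.

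The only subtlety—and the main place I would double-check—is a correct accounting of the positive zeros of $J_{2n+1}$ to make sure the $n-1$ zeros located by the IVT above exhaust all positive zeros except possibly one lying in $(0,x_1)$. Since $J_{2n+1}$ is odd and the definition of $\alpha_n$ forces $[J_{2n+1}]'(0)=0$, the origin is a zero of multiplicity at least three; the remaining $2n-2$ roots split symmetrically, yielding exactly $n-1$ positive zeros. These coincide with the ones produced above, so the interlacing is tight and the corollary is immediate. (If instead $y_1\in(0,x_1)$ is part of the counting, the same IVT argument still applies and a sign check at $x=x_1$ compared to the behavior near the origin fills the remaining slot; no extra positive zero can occur beyond $x_n$ since the $n-1$ intervals account for all non-trivial positive zeros.)
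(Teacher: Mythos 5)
Your proof is correct and follows essentially the same route as the paper: the authors likewise note that $[J_{2n+1}]'(0)=0$ forces a triple zero at the origin (so $y_1=0$ and there are exactly $n-1$ nontrivial positive zeros), and then obtain the interlacing by evaluating \eqref{JPoly-1} at consecutive zeros $x_k$, $x_{k+1}$ of $F_{2n+1}$ and invoking Lemma \ref{Lema_zeros_F_K} to get the sign change. Your version merely spells out the sign-alternation and zero-counting details that the paper leaves implicit.
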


\begin{proof}
Taking into account the symmetry and the fact that $[J_{2n+1}]^{\prime}(0)=0$%
, we deduce that $J_{2n+1}(x)$ has a zero of multiplicity $3$ at the origin.
This is, $y_{1}=0$. The result follows by evaluating \eqref{JPoly-1} at two
consecutive zeros $x_{k}$ and $x_{k+1}$ of $F_{2n+1}$, for $1\leq k\leq n-1$%
, and noticing that by Lemma \ref{Lema_zeros_F_K}, $J_{2n+1}(x_{k})$ and $%
J_{2n+1}(x_{k+1})$ have distinct sign.
\end{proof}

\begin{remark}
Observe that due to the triple zero at the origin, $J_{2n+1}(x)$ does not
have a zero in the interval $(0, x_{1})$, i.e. between the origin and the
first positive zero of $F_{2n+1}(x)$. Since $F_{2n+1}(x)$ only have $n-1$
positive zeros, we have $y_1=0$.
\end{remark}

%\begin{corollary} The positive zeros of  $Q_{2n+1}(x)$ and $F_{2n+1}(x)$ interlace. This is, for $1\leq k\leq  n$, we have
%\begin{equation}\label{interlacingFQ}
%x_{k-1}<\eta_{k}<x_{k},
%\end{equation}
%where $x_0$ is the zero of $F_{2n+1}(x)$ at the origin.
%\end{corollary}
%\begin{proof} For $2\leq k\leq  n$, we consider \eqref{[Sec2]-Prop21iiKer} and use the same argument as before. We only need to show that $0=x_{0}<\eta_{1}<x_{1}$. Since the remaining zeros are bounded by the zeros of $F_{2n+1}$, the only possible location for $\eta_{1}$ is $[0,x_{1})$. But from \eqref{[Sec2]-Qn(0)Monic}
%$$
%[Q_{2n+1}]^{\prime}(0)\neq0,
%$$
%and thus the multiplicity of the zero of $Q_{2n+1}$ at the origin is $1$. As a consequence, $\eta_{1}\in(0,x_{1})$.
%\end{proof}

%%%%%%%%%%%%%%%%%%%%%%%%%%%%%%%%%%%%%%%%%%%%%%%%%%%%%%%%%%%%%%%%%%%%%%%%%%%%%%%%%%%%%%%%%%%%%%%%%%

Now, we are ready to enunciate the main result of this Section.

%%%%%%%%%%%%%%%%%%%%%%%%%%%%%%%%%%%%%%%%%%%%%%%%%%%%%%%%%%%%%%%%%%%%%%%%%%%%%%%%%%%%%%%%%%%%%%%%%%

%%%%%%%%%%%%%%%%%%%%%%%%%%%%%%%%%%%%%%%%%%%%%%%%%%%%%%%%%%%%%%%%%%%%%%%%%%%%%%%%%%%%%%%%%%%%%%%%%%

\begin{theorem}
\label{InterlQKrall} On the positive real line, the following interlacing
property holds 
\begin{equation*}
0=y_{1}<\eta _{1}<x_{1}<y_{2}<\eta _{2}<x_{2}\cdots <y_{k}<\eta _{n}<x_{n}.
\end{equation*}%
Moreover, each $\eta _{k}:=\eta _{k}(M_{1})$ is a decreasing function of $%
M_{1}$ and, for each $k=1,\ldots ,n$,%
\begin{equation}
\lim_{M_{1}\rightarrow \infty }\eta _{k}(M_{1})=y_{k}\,,  \label{zeros_limit}
\end{equation}%
as well as%
\begin{equation}
\lim\limits_{M_{1}\rightarrow \infty }M_{1}[\eta _{k}(M_{1})-y_{k}]=\dfrac{%
-F_{2n+1}(y_{k})}{K_{2n-1}^{(1,1)}(0,0)[J_{2n+1}^{\prime }(y_{k})]}.
\label{convergence}
\end{equation}
\end{theorem}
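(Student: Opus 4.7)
The plan is to reduce the theorem to Lemma~\ref{LEMMA-3} by a change of variable that exploits the oddness of the polynomials involved.

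First, I would combine \eqref{[Sec2]-Prop21iiKer} with the limit formula \eqref{JPoly-1} to eliminate the kernel $K_{2n-1}^{(0,1)}(x,0)$, obtaining
\[
\bigl(1+M_{1}K_{2n-1}^{(1,1)}(0,0)\bigr)\,Q_{2n+1}(x)= F_{2n+1}(x)+M_{1}K_{2n-1}^{(1,1)}(0,0)\,J_{2n+1}(x).
\]
Hence $Q_{2n+1}$ shares its zero set with $\mathcal{Q}_{n}(x):=F_{2n+1}(x)+c\,J_{2n+1}(x)$, where $c:=M_{1}K_{2n-1}^{(1,1)}(0,0)>0$, and the dependence of the zeros on $M_{1}$ is captured entirely by the dependence on $c$.

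Since $F_{2n+1}$, $J_{2n+1}$ and hence $\mathcal{Q}_{n}$ are odd, I would pass to the variable $y=x^{2}$ via the monic polynomials of degree $n$
\[
\widetilde{F}_{n}(y):=\frac{F_{2n+1}(\sqrt y)}{\sqrt y},\qquad \widetilde{J}_{n}(y):=\frac{J_{2n+1}(\sqrt y)}{\sqrt y}.
\]
Writing $J_{2n+1}(x)=x^{3}h(x^{2})$ (since $J_{2n+1}$ has a triple zero at the origin) gives $\widetilde{J}_{n}(y)=y\,h(y)$, so the zeros of $\widetilde{J}_{n}$ are exactly $0=y_{1}^{2}<y_{2}^{2}<\cdots<y_{n}^{2}$, while those of $\widetilde{F}_{n}$ are $x_{1}^{2}<\cdots<x_{n}^{2}$. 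The corollary preceding this theorem, together with $y_{1}=0<x_{1}$, then yields the strict interlacing $y_{k}^{2}<x_{k}^{2}<y_{k+1}^{2}$ for $1\leq k\leq n-1$, which is exactly the hypothesis of Lemma~\ref{LEMMA-3} with $\mathfrak{f}_{n}=\widetilde{F}_{n}$ and $\mathfrak{j}_{n}=\widetilde{J}_{n}$.

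Applying Lemma~\ref{LEMMA-3} to $\widetilde{F}_{n}+c\,\widetilde{J}_{n}$ produces zeros $\xi_{k}=\xi_{k}(c)$ satisfying $y_{k}^{2}<\xi_{k}<x_{k}^{2}$, each $\xi_{k}$ strictly decreasing in $c$, with $\lim_{c\to\infty}\xi_{k}=y_{k}^{2}$ and $\lim_{c\to\infty}c(\xi_{k}-y_{k}^{2})=-\widetilde{F}_{n}(y_{k}^{2})/\widetilde{J}_{n}'(y_{k}^{2})$. Writing $\xi_{k}=\eta_{k}^{2}$ returns the interlacing and monotonicity statements and \eqref{zeros_limit} in the original variable. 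For $k\geq 2$, the identities $\widetilde{F}_{n}(y_{k}^{2})=F_{2n+1}(y_{k})/y_{k}$ and $J_{2n+1}'(y_{k})=2y_{k}^{2}\widetilde{J}_{n}'(y_{k}^{2})$ (from differentiating $J_{2n+1}(x)=x\widetilde{J}_{n}(x^{2})$ at $x=y_{k}$ and using $\widetilde{J}_{n}(y_{k}^{2})=0$), combined with $\eta_{k}^{2}-y_{k}^{2}=(\eta_{k}-y_{k})(\eta_{k}+y_{k})$ and $\eta_{k}+y_{k}\to 2y_{k}$, convert the preceding limit into \eqref{convergence}.

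The main obstacle is the case $k=1$: because $y_{1}=0$ is a zero of both $F_{2n+1}$ and $J_{2n+1}'$, the right-hand side of \eqref{convergence} is $0/0$ there, so the rate has to be recovered from the $\widetilde{F}_{n},\widetilde{J}_{n}$ version using $\widetilde{F}_{n}(0)=F_{2n+1}'(0)$ and $\widetilde{J}_{n}'(0)=J_{2n+1}'''(0)/6$. This yields $\eta_{1}^{2}\sim\text{const}/c$, so $\eta_{1}\to 0$ at rate $M_{1}^{-1/2}$ rather than $M_{1}^{-1}$, and this index therefore needs a separate treatment or a l'H\^opital-style reading of \eqref{convergence}.
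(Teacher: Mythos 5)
Your argument is essentially the paper's own proof: the authors likewise rewrite $\bigl(1+M_{1}K_{2n-1}^{(1,1)}(0,0)\bigr)Q_{2n+1}(x)=F_{2n+1}(x)+M_{1}K_{2n-1}^{(1,1)}(0,0)\,J_{2n+1}(x)$, pass to the variable $y=x^{2}$ by writing $F_{2n+1}(x)=x\hat{f}_{n}(x^{2})$ and $J_{2n+1}(x)=x\hat{j}_{n}(x^{2})$, apply Lemma~\ref{LEMMA-3} to the degree-$n$ polynomials in $y$, and then undo the substitution via $\eta_{k}^{2}-y_{k}^{2}=(\eta_{k}+y_{k})(\eta_{k}-y_{k})$, exactly as you propose. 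Your closing observation about $k=1$ is well taken and goes beyond the paper: since $y_{1}=0$, the final division by $\eta_{k}+y_{k}\to 2y_{k}$ degenerates, the right-hand side of \eqref{convergence} is an indeterminate $0/0$, and the correct rate is $\eta_{1}=\mathcal{O}(M_{1}^{-1/2})$ rather than $\mathcal{O}(M_{1}^{-1})$ --- a case the published proof passes over silently.
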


%%%%%%%%%%%%%%%%%%%%%%%%%%%%%%%%%%%%%%%%%%%%%%%%%%%%%%%%%%%%%%%%%%%%%%%%%%%%%%%%%%%%%%%%%%%%%%%%%%

%%%%%%%%%%%%%%%%%%%%%%%%%%%%%%%%%%%%%%%%%%%%%%%%%%%%%%%%%%%%%%%%%%%%%%%%%%%%%%%%%%%%%%%%%%%%%%%%%%

\begin{proof}
Notice that the polynomials $\{\tilde{Q}_{2n+1}(x)\}_{n\geq 0}$ with $\tilde{%
Q}_{2n+1}(x)=\rho _{2n+1}Q_{2n+1}(x)$, can be represented as%
\begin{equation*}
\tilde{Q}_{2n+1}(x)=F_{2n+1}(x)+M_{1}K_{2n-1}^{(1,1)}\left( 0,0\right)
J_{2n+1}\left( x\right) ,  \label{LagSobNorm-2}
\end{equation*}%
where%
\begin{equation*}
\rho _{2n+1}=1+M_{1}K_{2n-1}^{(1,1)}\left( 0,0\right).
\end{equation*}
Thus, the interlacing follows at once from \eqref{interlacingFK} and Lemma %
\ref{LEMMA-3}. On the other hand, we can write 
\begin{equation*}
x\hat{q}_n(x^2)=x\hat{f}_n(x^2)+M_{1}K_{2n-1}^{(1,1)}(0,0)x\hat{j}_n(x^2),
\end{equation*}
with 
\begin{eqnarray*}
\hat{f}_n&=&(x-x_1^2)\cdots(x-x_n^2), \\
\hat{q}_n&=&(x-\eta_1^2)\cdots(x-\eta_n^2), \\
\hat{j}_n&=&(x-y_1^2)\cdots(x-y_n^2),
\end{eqnarray*}
and by the previous results, their zeros are real, simple and interlace, so
they satisfy the conditions on Lemma \ref{LEMMA-3}, and therefore 
\begin{equation*}
\lim_{M_1\rightarrow\infty}\eta_k^2=y_k^2,
\end{equation*}
and 
\begin{equation*}
\lim_{M_1\rightarrow\infty}=M_{1}K_{2n-1}^{(1,1)}(0,0)[\eta_k^2-y_k^2]=-%
\frac{\hat{f}_n(y_k^2)}{\hat{j}_n^{\prime}(y_k^2)}=-\frac{2y_k F_{2n+1}(y_k)%
}{[J_{2n+1}]^{\prime}(y_k)},
\end{equation*}
and since $\eta_k^2-y_k^2=(\eta_k+y_k)(\eta_k-y_k)$ and $\lim_{M_1%
\rightarrow\infty}\eta_k=y_k$, the result follows.
\end{proof}

\begin{remark}
Because of the symmetry, the limits \eqref{zeros_limit} and %
\eqref{convergence} also hold for the negative zeros. The only difference is
that those zeros are increasing functions of $M_1$.
\end{remark}

\begin{figure}[!ht]
\centerline{\includegraphics[width=11cm,keepaspectratio]{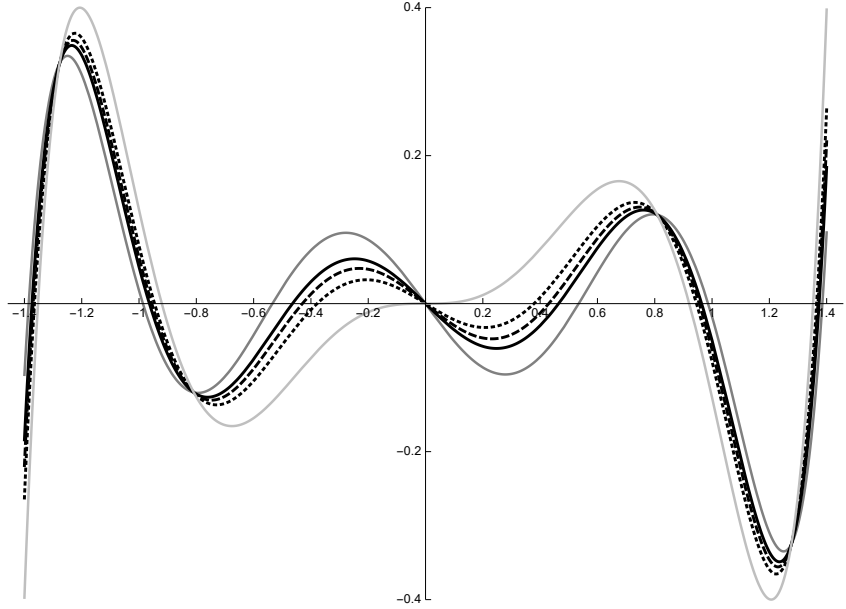}}
\caption{It illustrates the variation of the zeros of an odd degree
Freud-Sobolev type polynomials when $M_{1}$ varies as described in Theorem 
\protect\ref{InterlQKrall}. The graphs of $Q_{7}(x)$ for three different
values of $M_{1}$ are plotted. The black continuous, dashed, and dotted
lines correspond to $M_{1}=0.03$, $M_{1}=0.05$, and $M_{1}=0.09$,
respectively. We also include the graphs of $F_{7}(x)$ (medium gray color)
and $J_{7}(x)$ (light gray color), showing that the zeros of $Q_{7}(x)$ are
increasing functions of $M_{1}$ in the negative real semiaxis, traveling
from the negative zeros of $F_{7}(x)$ to the corresponding zeros of $%
J_{7}(x) $ as $M_{1}$ increases. Likewise, the positive zeros of $Q_{7}(x) $
are decreasing functions of $M_{1}$, traveling from the positive zero of $%
F_{7}(x)$ to the corresponding zero of $J_{7}(x)$ according with Theorem 
\protect\ref{InterlQKrall}. Observe that in this picture, the value of $%
M_{0} $ is irrelevant.}
\label{[S4]-FigLag}
\end{figure}

%%%%%%%%%%%%%%%%%%%%%%%%%%%%%%%%%%%%%%%%%%%%%%%%%%%%%%%%%%%%%%%%%%%%%%%%%%%%%%%%%%%%%%%%%%%%%%%%%%

%%%%%%%%%%%%%%%%%%%%%%%%%%%%%%%%%%%%%%%%%%%%%%%%%%%%%%%%%%%%%%%%%%%%%%%%%%%%%%%%%%%%%%%%%%%%%%%%%%

\section{Holonomic equation and electrostatic interpretation}

%Por ejemplo, aqu\'{i} presento un ejemplo gr\'{a}fico de los siguientes polinomios:%
%\begin{eqnarray*}
%F_{5}(x) &=&0.598291x-1.82283x^{3}+x^{5}\quad \text{(gr\'{a}fica azul),} \\
%J_{5}(x) &=&-1.23278x^{3}+x^{5}\quad \text{(gr\'{a}fica roja, solo 3 ceros
%reales),}
%\end{eqnarray*}%
%y, para $M_{1}=0.16$, la gr\'{a}fica color verde es del polinomio Freud-Sobolev
%type de grado 5%
%\begin{equation*}
%\tilde{Q}_{5}(x)=0.340157x-1.7655x^{3}+1.16x^{5}
%\end{equation*}

%%%%%%%%%%%%%%%%%%%%%%%%%%%%%%%%%%%%%%%%%%%%%%%%%%%%%%%%%%%%%%%%%%%%%%%%%%%%%%%%%%%%%%%%%%%%%%%%%%

In this section, we deduce a second order differential equation satisfied by 
$\{Q_{n}(x)\}_{n\geq 0}$ and, as an application, an electrostatic
interpretation of its zeros is presented. We will use the connection formula
between $Q_{n}$ and $F_{n}$, which for convenience will take the form %
\eqref{conn_holo}. We will also use the structure formula \eqref{fprimDer}
(for the monic normalization) and the three term recurrence relation %
\eqref{3TRR-Monic}. Let us rewrite these formulas as 
\begin{eqnarray}
Q_{n}(x) &=&A_{n}(x)F_{n}(x)+B_{n}(x)F_{n-1}(z),  \label{connect} \\
F_{n}^{\prime }(x) &=&a_{n}(x)F_{n}(x)+b_{n}(x)F_{n-1}(x),  \label{struc} \\
F_{n+1}(x) &=&\beta _{n}(x)F_{n}(x)+\gamma _{n}(x)F_{n-1}(x),
\label{threeterm}
\end{eqnarray}%
where the coefficients above are given according to \eqref{conn_holo}, %
\eqref{fprimDer} and \eqref{3TRR-Monic}, respectively. Before stating our
main result, we need the following Lemmas.

\begin{lemma}
\label{[S3]-LEMA-1}The monic sequences $\{Q _{n}(x)\}_{n\geq 0}$ and $%
\{F_{n}(x)\}_{n\geq 0}$ satisfy%
\begin{equation}
\lbrack Q _{n}(x)]^{\prime }=C_{1}(x;n)F _{n}(x)+D_{1}(x;n)F_{n-1}(x)
\label{DerPSI-C1D1}
\end{equation}%
where%
\begin{eqnarray}
C_{1}(x;n) &=&A_{n}^{\prime }(x)+A_{n}(x)a(z;n)+B_{n}(x)\frac{b_{n-1}(x)}{%
\gamma_{n-1} (x)},  \label{[S3]-Coefs-ABCD1} \\
D_{1}(x;n) &=&B_{n}^{\prime }(x)+A_{n}(x)b_n(x)+B_{n}(x)\left( a_{n-1}(x)-%
\frac{\beta_{n-1} (x)}{\gamma_{n-1} (x)}\right) .  \notag
\end{eqnarray}
\end{lemma}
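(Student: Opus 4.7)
The plan is to differentiate the connection formula \eqref{connect} and then rewrite the resulting expression as a combination of $F_n(x)$ and $F_{n-1}(x)$ alone, using \eqref{struc} and \eqref{threeterm} to eliminate $F_n'(x)$, $F_{n-1}'(x)$ and $F_{n-2}(x)$. Concretely, differentiating \eqref{connect} gives
\begin{equation*}
[Q_n(x)]' = A_n'(x)F_n(x) + A_n(x)F_n'(x) + B_n'(x)F_{n-1}(x) + B_n(x)F_{n-1}'(x),
\end{equation*}
so the task reduces to expressing $F_n'(x)$ and $F_{n-1}'(x)$ in the basis $\{F_n(x),F_{n-1}(x)\}$ and then collecting coefficients.

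For $F_n'(x)$ I would simply quote the structure formula \eqref{struc}: $F_n'(x) = a_n(x)F_n(x) + b_n(x)F_{n-1}(x)$. For $F_{n-1}'(x)$ the structure formula with index shifted gives $F_{n-1}'(x) = a_{n-1}(x)F_{n-1}(x) + b_{n-1}(x)F_{n-2}(x)$, which introduces $F_{n-2}(x)$. To eliminate it, I would invoke the three-term recurrence \eqref{threeterm} at index $n-1$, namely $F_n(x) = \beta_{n-1}(x)F_{n-1}(x) + \gamma_{n-1}(x)F_{n-2}(x)$, and solve for $F_{n-2}(x) = \gamma_{n-1}(x)^{-1}\bigl(F_n(x) - \beta_{n-1}(x)F_{n-1}(x)\bigr)$; here the positivity of the recurrence coefficients guarantees $\gamma_{n-1}(x) \neq 0$. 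Substituting back yields
\begin{equation*}
F_{n-1}'(x) = \frac{b_{n-1}(x)}{\gamma_{n-1}(x)}F_n(x) + \Bigl(a_{n-1}(x) - \frac{b_{n-1}(x)\beta_{n-1}(x)}{\gamma_{n-1}(x)}\Bigr)F_{n-1}(x).
\end{equation*}

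The rest is bookkeeping: plug these two expressions into the differentiated connection formula and gather the coefficients of $F_n(x)$ and $F_{n-1}(x)$. The $F_n(x)$ coefficient becomes $A_n'(x) + A_n(x)a_n(x) + B_n(x)b_{n-1}(x)/\gamma_{n-1}(x)$, which is exactly $C_1(x;n)$, and the $F_{n-1}(x)$ coefficient produces the stated $D_1(x;n)$. No estimates or analytical arguments are needed, and since the polynomial identities hold for all $x$, no limiting or domain considerations enter.

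I do not anticipate a real obstacle here. The only point requiring slight care is the elimination of $F_{n-2}(x)$ via \eqref{threeterm}, which must use the correct index ($n-1$ rather than $n$) and treat $\gamma_{n-1}(x)$ as a rational denominator; after multiplying through one could alternatively state the identity in cleared form, but the version as written is the most economical. Everything else is a direct application of \eqref{connect}, \eqref{struc}, \eqref{threeterm} and the product rule.
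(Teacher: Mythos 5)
Your proof is correct and follows exactly the same route as the paper: differentiate \eqref{connect}, substitute \eqref{struc} for $F_n'$, and eliminate $F_{n-2}$ from the shifted structure formula by solving \eqref{threeterm} at index $n-1$ for $F_{n-2}$. One remark: your (correct) elimination gives the $F_{n-1}$-coefficient $a_{n-1}(x)-b_{n-1}(x)\beta_{n-1}(x)/\gamma_{n-1}(x)$, whereas the stated $D_1$ and the paper's own intermediate display drop the factor $b_{n-1}(x)$ in front of $\beta_{n-1}(x)/\gamma_{n-1}(x)$ --- this is evidently a typo in the paper (it keeps $b_{n-1}$ in the $F_n$-coefficient but not here), and your bookkeeping is the accurate version, so you should not claim it ``produces the stated $D_1$'' verbatim.
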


%%%%%%%%%%%%%%%%%%%%%%%%%%%%%%%%%%%%%%%%%%%%%%%%%%%%%%%%%%%%%%%%%%%%%%%%%%%%%%%%%%%%%%%%%%%%%%%%%%

%%%%%%%%%%%%%%%%%%%%%%%%%%%%%%%%%%%%%%%%%%%%%%%%%%%%%%%%%%%%%%%%%%%%%%%%%%%%%%%%%%%%%%%%%%%%%%%%%%

\begin{proof}
Notice that, combining \eqref{struc} and \eqref{threeterm} we have 
\begin{equation*}
\lbrack F_{n-1}(x)]^{\prime }=\frac{b_{n-1}(x)}{\gamma _{n-1}(x)}%
F_{n}(x)+\left( a_{n-1}(x)-\frac{\beta _{n-1}(x)}{\gamma _{n-1}(x)}\right)
F_{n-1}(x).
\end{equation*}%
The result follows by substituting the last equation and \eqref{struc} into
the derivative with respect to $x$ of \eqref{connect}. 
%Next, taking $z$ derivative in both sides of (\ref{ConnForm2}), we obtain%
%\begin{eqnarray*}
%\lbrack \Psi _{n}(z)]^{\prime } &=&A_{1}^{\prime }(z;n)\Phi
%_{n}(z)+A_{1}(z;n)[\Phi _{n}(z)]^{\prime } \\
%&&+B_{1}^{\prime }(z;n)\Phi _{n-1}(z)+B_{1}(z;n)[\Phi _{n-1}(z)]^{\prime }.
%\end{eqnarray*}%
%Finally, we replace (\ref{StrRel2}) and (\ref{PHInm1der}) into the above
%formula, and after some easy computations, the Lemma follows.
\end{proof}

\begin{lemma}
\label{[S3]-LEMA-4}The sequences of monic polynomials $\{Q_{n}(x)\}_{n\geq
0} $ and $\{F_{n}(x)\}_{n\geq 0}$ are also related by%
\begin{eqnarray}
{Q_{n-1}(x)} &=&A_{2}(x;n)F_{n}(x)+B_{2}(x;n)F_{n-1}(x),  \label{PSInm1-A2D2}
\\
\lbrack {Q}_{n-1}(x)]^{\prime } &=&C_{2}(x;n)F_{n}(x)+D_{2}(x;n)F_{n-1}(x),
\label{DzPSInm1-C2D2}
\end{eqnarray}%
where%
\begin{eqnarray*}
A_{2}(x;n) &=&\frac{B_{n-1}(x)}{\gamma _{n-1}(x)},\quad
B_{2}(x;n)=A_{n-1}(x)-B_{n-1}(x)\frac{\beta _{n-1}(x)}{\gamma _{n-1}(x)}, \\
C_{2}(x;n) &=&\frac{D_{1}(x;n-1)}{\gamma _{n-1}(x)},\quad
D_{2}(x;n)=C_{1}(x;n-1)-D_{1}(x;n-1)\frac{\beta _{n-1}(x)}{\gamma _{n-1}(x)}.
\end{eqnarray*}%
The coefficients $C_{1}(x;n-1)$ and $D_{1}(x;n-1)$ are given in (\ref%
{[S3]-Coefs-ABCD1}).
\end{lemma}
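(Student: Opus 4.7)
The plan is to derive both identities by the same mechanism: express $F_{n-2}(x)$ in terms of $F_n(x)$ and $F_{n-1}(x)$ using the three term recurrence relation \eqref{threeterm}, and then substitute into the index-shifted versions of the connection formula and of Lemma \ref{[S3]-LEMA-1}.

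First I would solve \eqref{threeterm} (after shifting $n \mapsto n-1$) for $F_{n-2}(x)$, obtaining
\[
F_{n-2}(x) = \frac{1}{\gamma_{n-1}(x)}\bigl(F_n(x) - \beta_{n-1}(x) F_{n-1}(x)\bigr).
\]
Note this requires $\gamma_{n-1}(x) \neq 0$ generically, which holds away from its zero set. Shifting the index in the connection formula \eqref{connect} gives
\[
Q_{n-1}(x) = A_{n-1}(x)F_{n-1}(x) + B_{n-1}(x)F_{n-2}(x),
\]
and substituting the above expression for $F_{n-2}(x)$ and collecting the coefficients of $F_n(x)$ and $F_{n-1}(x)$ yields \eqref{PSInm1-A2D2} with $A_2(x;n)$ and $B_2(x;n)$ as stated.

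For \eqref{DzPSInm1-C2D2}, I would apply Lemma \ref{[S3]-LEMA-1} with the shift $n \mapsto n-1$, which gives
\[
[Q_{n-1}(x)]^{\prime} = C_1(x;n-1) F_{n-1}(x) + D_1(x;n-1) F_{n-2}(x).
\]
Substituting again the expression for $F_{n-2}(x)$ obtained above and collecting terms yields the stated $C_2(x;n)$ and $D_2(x;n)$.

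Both formulas are thus mechanical consequences of already established identities, so no serious obstacle is expected. The one point to be careful about is the dependence on $x$ of $\gamma_{n-1}(x)$: since in our setting $\gamma_{n-1}(x) = a_{n-1}^2$ is a nonzero constant (from \eqref{3TRR-Monic}), the division is unambiguous, and the resulting $A_2, B_2, C_2, D_2$ are rational functions of $x$ with a harmless common factor of $\gamma_{n-1}$ in the denominator, which will be cleared when these quantities are eventually used to assemble the holonomic equation.
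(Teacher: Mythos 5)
Your proof is correct and is essentially identical to the paper's: shift the degree in \eqref{connect} and \eqref{DerPSI-C1D1}, then eliminate $F_{n-2}$ via the three-term recurrence \eqref{threeterm} and collect coefficients. One trivial remark: from \eqref{3TRR-Monic} the recurrence coefficient is $\gamma_{n-1}(x)=-a_{n-1}^{2}$ rather than $a_{n-1}^{2}$, but since you only use that it is a nonzero constant, this does not affect the argument.
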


%%%%%%%%%%%%%%%%%%%%%%%%%%%%%%%%%%%%%%%%%%%%%%%%%%%%%%%%%%%%%%%%%%%%%%%%%%%%%%%%%%%%%%%%%%%%%%%%%%

%%%%%%%%%%%%%%%%%%%%%%%%%%%%%%%%%%%%%%%%%%%%%%%%%%%%%%%%%%%%%%%%%%%%%%%%%%%%%%%%%%%%%%%%%%%%%%%%%%

\begin{proof}
The expressions follow from \eqref{connect} and \eqref{DerPSI-C1D1},
respectively, after a shift in the degree, and using \eqref{threeterm} to
express both of them in terms of $F_n$ and $F_{n-1}$.
\end{proof}

\begin{lemma}
\label{[S3]-LEMA-5}The following "inverse connection" formulas hold. 
\begin{eqnarray}
F _{n}(x) &=&\frac{B_{2}(x;n)}{\Lambda (x;n)}Q _{n}(z)-\frac{B_{n}(x)}{%
\Lambda (x;n)}Q_{n-1}(x),  \label{InvR-PSIn} \\
F _{n-1}(z) &=&\frac{-A_{2}(x;n)}{\Lambda (x;n)}Q _{n}(x)+\frac{A_{n}(x)}{%
\Lambda (x;n)}Q _{n-1}(x),  \label{InvR-PSInm1}
\end{eqnarray}%
where 
\begin{equation*}
\Lambda (x;n)=A_{n}(x)B_{2}(x;n)-A_{2}(x;n)B_{n}(x).
\end{equation*}
\end{lemma}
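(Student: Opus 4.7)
The plan is to regard the two connection formulas \eqref{connect} and \eqref{PSInm1-A2D2} as a single linear system for the two unknowns $F_n(x)$ and $F_{n-1}(x)$, with the Freud--Sobolev polynomials $Q_n(x), Q_{n-1}(x)$ as data. Concretely, I would stack them as
\[
\begin{pmatrix} A_n(x) & B_n(x) \\ A_2(x;n) & B_2(x;n) \end{pmatrix}
\begin{pmatrix} F_n(x) \\ F_{n-1}(x) \end{pmatrix}
=
\begin{pmatrix} Q_n(x) \\ Q_{n-1}(x) \end{pmatrix},
\]
whose coefficient matrix has determinant exactly $\Lambda(x;n)=A_n(x)B_2(x;n)-A_2(x;n)B_n(x)$, which is how $\Lambda$ is defined in the statement.

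Once this is written down, the two stated identities \eqref{InvR-PSIn} and \eqref{InvR-PSInm1} are simply Cramer's rule applied to the system: $F_n(x)$ is obtained by replacing the first column by $(Q_n,Q_{n-1})^T$ and dividing by $\Lambda(x;n)$, which gives $(B_2(x;n)Q_n(x)-B_n(x)Q_{n-1}(x))/\Lambda(x;n)$; $F_{n-1}(x)$ comes from replacing the second column, producing $(-A_2(x;n)Q_n(x)+A_n(x)Q_{n-1}(x))/\Lambda(x;n)$. So the computational content is essentially a $2\times 2$ inversion.

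The only genuine issue to address is that $\Lambda(x;n)$ is not identically zero, so the inversion is legitimate (as an identity of rational functions, and in fact as polynomial identities after clearing denominators). This would be the main obstacle if not handled carefully. I would argue it as follows: $F_n$ and $F_{n-1}$ are linearly independent polynomials (their degrees differ), and the same holds for $Q_n$ and $Q_{n-1}$. Hence the linear transformation sending $(F_n,F_{n-1})$ to $(Q_n,Q_{n-1})$, with coefficients in the field of rational functions in $x$, must have nonzero determinant; equivalently, $\Lambda(x;n)\not\equiv 0$. A cleaner route is to track leading-term behavior: combining \eqref{connect} and \eqref{PSInm1-A2D2} one can read off that $\Lambda(x;n)$ behaves asymptotically like a nonzero polynomial in $x$ of predictable degree, using $A_n(x)=1+O(x^{-2})$, $B_n(x)=O(x^{-1})$, and the analogous estimates for $A_2,B_2$ that come directly from Lemma \ref{[S3]-LEMA-4} together with the explicit forms of $\beta_{n-1}(x)$ and $\gamma_{n-1}(x)$ from \eqref{threeterm}.

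With $\Lambda(x;n)\not\equiv 0$ established, the proof reduces to substituting the formulas \eqref{InvR-PSIn}--\eqref{InvR-PSInm1} back into \eqref{connect} and \eqref{PSInm1-A2D2} and checking that both identities reduce to $0=0$; this is purely algebraic and requires no properties of the Freud weight. In summary, the proof is a one-line application of Cramer's rule once the non-degeneracy of $\Lambda$ is noted, and I would present it in exactly that order: statement of the system, identification of $\Lambda$ as its determinant, a brief non-vanishing argument, and the invocation of Cramer's rule.
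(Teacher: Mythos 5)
Your proposal is correct and follows essentially the same route as the paper, which likewise obtains \eqref{InvR-PSIn}--\eqref{InvR-PSInm1} by solving the $2\times 2$ linear system formed by \eqref{connect} and \eqref{PSInm1-A2D2}. Your additional remark on the non-vanishing of $\Lambda(x;n)$ is a point the paper passes over silently, but it does not change the substance of the argument.
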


\begin{proof}
The result follows by solving the linear system defined by \eqref{connect}
and \eqref{PSInm1-A2D2}.
\end{proof}

Now, we replace (\ref{InvR-PSIn}) and (\ref{InvR-PSInm1}) in (\ref%
{DerPSI-C1D1}) and (\ref{DzPSInm1-C2D2}), respectively, to obtain the ladder
equations%
\begin{eqnarray*}
\lbrack Q_{n}(z)]^{\prime } &=&\left[ \frac{C_{1}(x;n)B_{2}(x;n)}{\Lambda
(x;n)}-\frac{D_{1}(x;n)A_{2}(x;n)}{\Lambda (x;n)}\right] Q_{n}(x) \\
&&+\left[ \frac{A_{n}(x)D_{1}(x;n)}{\Lambda (x;n)}-\frac{C_{1}(x;n)B_{n}(x)}{%
\Lambda (x;n)}\right] Q_{n-1}(x),
\end{eqnarray*}%
\begin{eqnarray*}
\lbrack {Q}_{n-1}(x)]^{\prime } &=&\left[ \frac{C_{2}(x;n)B_{2}(x;n)}{%
\Lambda (x;n)}-\frac{A_{2}(x;n)D_{2}(x;n)}{\Lambda (x;n)}\right] Q_{n}(x) \\
&&+\left[ \frac{A_{n}(x)D_{2}(x;n)}{\Lambda (x;n)}-\frac{C_{2}(x;n)B_{n}(x)}{%
\Lambda (x;n)}\right] Q_{n-1}(x),
\end{eqnarray*}%
which can be written in the more compact way 
\begin{eqnarray}
(\Xi (x;n,2)I-D_{x})Q_{n}(x) &=&\Xi (x;n,1)Q_{n-1}(x),  \label{Ladder1Psi} \\
(\Theta (x;n,1)I+D_{x})Q_{n-1}(x) &=&\Theta (x;n,2)Q_{n}(x),
\label{Ladder2Psi}
\end{eqnarray}%
where $I$ and $D_{x}$ are the identity and $x$-derivative operators,
respectively, by defining the determinants 
\begin{eqnarray}
\Xi (x;n,k) &=&\frac{1}{\Lambda (x;n)}%
\begin{vmatrix}
C_{1}(x;n) & A_{k}(x;n) \\ 
D_{1}(x;n) & B_{k}(x;n)%
\end{vmatrix}%
,  \label{DifPSI1} \\
\Theta (x;n,k) &=&\frac{1}{\Lambda (x;n)}%
\begin{vmatrix}
C_{2}(x;n) & A_{k}(x;n) \\ 
D_{2}(x;n) & B_{k}(x;n)%
\end{vmatrix}%
,  \label{DifPSI2}
\end{eqnarray}%
for $k=1,2$, where $A_{1}(x;n):=A_{n}(x)$. As a consequence, we have the
following result.

%%%%%%%%%%%%%%%%%%%%%%%%%%%%%%%%%%%%%%%%%%%%%%%%%%%%%%%%%%%%%%%%%%%%%%%%%%%%%%%%%%%%%%%%%%%%%%%%%%

%%%%%%%%%%%%%%%%%%%%%%%%%%%%%%%%%%%%%%%%%%%%%%%%%%%%%%%%%%%%%%%%%%%%%%%%%%%%%%%%%%%%%%%%%%%%%%%%%%

\begin{theorem}
\label{[S1]-THEO-1}Let $\mathfrak{b}_{n}$ and $\mathfrak{b}_{n}^{\dag }$ be
the differential operators%
\begin{eqnarray*}
\mathfrak{b}_{n} &=&\Xi (x;n,2)I-D_{x}, \\
\mathfrak{b}_{n}^{\dag } &=&\Theta (x;n,1)I+D_{x}.
\end{eqnarray*}%
Then,%
\begin{eqnarray*}
\mathfrak{b}_{n}[Q _{n}(x)] &=&\Xi (x;n,1)Q _{n-1}(x), \\
\mathfrak{b}_{n}^{\dag }[Q _{n-1}(x)] &=&\Theta (x;n,2)Q _{n}(x),
\end{eqnarray*}%
where $\Xi (x;n,k)$\ and $\Theta (x;n,k)$ are given in (\ref{DifPSI1}) and (%
\ref{DifPSI2}), respectively.
\end{theorem}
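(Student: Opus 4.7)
The proof essentially amounts to assembling pieces that are already in place just before the statement. The plan is to read the two ladder equations (\ref{Ladder1Psi}) and (\ref{Ladder2Psi}) as the definitions of the actions of $\mathfrak{b}_n$ and $\mathfrak{b}_n^\dag$, and then verify that these equations really do hold by following the computation already sketched in the paragraph preceding the theorem.

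Concretely, I would proceed as follows. First, rewrite the connection formula (\ref{connect}) together with its degree-shifted counterpart (\ref{PSInm1-A2D2}) as a linear system in the unknowns $F_n(x)$ and $F_{n-1}(x)$. Its determinant is $\Lambda(x;n) = A_n(x)B_2(x;n) - A_2(x;n)B_n(x)$, and Lemma \ref{[S3]-LEMA-5} already carries out the inversion, giving explicit expressions for $F_n$ and $F_{n-1}$ as linear combinations of $Q_n$ and $Q_{n-1}$ with coefficients rational in $x$. Second, Lemma \ref{[S3]-LEMA-1} expresses $[Q_n]'$ as a combination $C_1(x;n)F_n(x)+D_1(x;n)F_{n-1}(x)$, and Lemma \ref{[S3]-LEMA-4} does the analogous thing for $[Q_{n-1}]'$ with coefficients $C_2,D_2$. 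Substituting the inverse connection formulas of Lemma \ref{[S3]-LEMA-5} into these two expressions and collecting the coefficients of $Q_n(x)$ and $Q_{n-1}(x)$ one gets exactly the two $2\times 2$ determinants that appear in (\ref{DifPSI1}) and (\ref{DifPSI2}), which is precisely (\ref{Ladder1Psi})--(\ref{Ladder2Psi}). Rewriting these in operator form with $I$ and $D_x$ then yields the two asserted identities $\mathfrak{b}_n[Q_n]=\Xi(x;n,1)Q_{n-1}$ and $\mathfrak{b}_n^{\dag}[Q_{n-1}]=\Theta(x;n,2)Q_n$.

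The only genuine issue is that the inversion step requires $\Lambda(x;n)$ to be nonzero, so the resulting identities would a priori be valid only at points where $\Lambda(x;n)\neq 0$. However, every relation in sight is a polynomial identity after clearing the common denominator $\Lambda(x;n)$, so the identities extend to all $x\in\mathbb{R}$ by analytic continuation; the coefficients $\Xi(x;n,k)$ and $\Theta(x;n,k)$ defined through (\ref{DifPSI1})--(\ref{DifPSI2}) are then rational functions in $x$ whose possible poles cancel against the factors coming from the connection formulas. This is the one bookkeeping step that must be checked explicitly.

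The main obstacle is therefore not conceptual but organisational: keeping track of the four coefficients $A_n,B_n,C_1,D_1$ (from (\ref{connect}) and Lemma \ref{[S3]-LEMA-1}) and their shifted/derived analogues $A_2,B_2,C_2,D_2$ (from Lemma \ref{[S3]-LEMA-4}), and then recognising that the coefficient obtained after the substitution is exactly the $2\times 2$ determinant divided by $\Lambda(x;n)$. Once the bookkeeping is done honestly, the theorem is a reformulation in operator language of the two identities already derived, and no further input is needed beyond Lemmas \ref{[S3]-LEMA-1}, \ref{[S3]-LEMA-4} and \ref{[S3]-LEMA-5}.
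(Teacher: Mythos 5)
Your proposal follows exactly the paper's route: the theorem is obtained by substituting the inverse connection formulas of Lemma \ref{[S3]-LEMA-5} into the derivative relations (\ref{DerPSI-C1D1}) and (\ref{DzPSInm1-C2D2}), collecting the coefficients of $Q_{n}$ and $Q_{n-1}$ into the determinants (\ref{DifPSI1})--(\ref{DifPSI2}), and rewriting the result in operator form. Your extra remark about the non-vanishing of $\Lambda(x;n)$ and extension by polynomial identity is a sensible precaution that the paper leaves implicit, but the argument is the same.
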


Finally, we state the main result of this section.

\begin{theorem}
\label{[S2]-THEO-5} The Sobolev-Freud type polynomials $\{Q_{n}(x)\}_{n\geq
0}$\ satisfy the second order linear differential equation%
\begin{equation}
\lbrack Q _{n}(x)]^{\prime \prime }+\mathcal{R}(x;n)[Q _{n}(x)]^{\prime }+%
\mathcal{S}(x;n)Q _{n}(x)=0,  \label{[S2]-2ndODE}
\end{equation}%
where%
\begin{eqnarray*}
\mathcal{R}(x;n) &=&\Theta (x;n,1)-\Xi (x;n,2)-\frac{[\Xi (x;n,1)]^{\prime }%
}{\Xi (x;n,1)}, \\
\mathcal{S}(x;n) &=&\Xi (x;n,2)\left[ \frac{[\Xi (x;n,1)]^{\prime }}{\Xi
(x;n,1)}-\Theta (x;n,1)\right] -[\Xi (x;n,2)]^{\prime }.
\end{eqnarray*}
\end{theorem}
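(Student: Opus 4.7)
The plan is to combine the two ladder equations established in Theorem \ref{[S1]-THEO-1} to eliminate $Q_{n-1}(x)$ and arrive at a second order equation involving only $Q_n$, $Q_n'$, and $Q_n''$. Since $\mathfrak{b}_n$ lowers the index and $\mathfrak{b}_n^\dag$ raises it, the composition $\mathfrak{b}_n^\dag \mathfrak{b}_n$ maps $Q_n$ back to itself (up to a scalar factor that encodes the differential equation), and this is precisely the mechanism that produces \eqref{[S2]-2ndODE}.

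Concretely, I would first rewrite the lowering relation $\mathfrak{b}_n[Q_n(x)] = \Xi(x;n,1) Q_{n-1}(x)$ as
\begin{equation*}
Q_{n-1}(x) = \frac{\Xi(x;n,2) Q_n(x) - [Q_n(x)]'}{\Xi(x;n,1)},
\end{equation*}
which is valid at every point where $\Xi(x;n,1) \neq 0$. Differentiating this identity with respect to $x$ and applying the quotient rule yields an expression for $[Q_{n-1}(x)]'$ in terms of $Q_n(x)$, $[Q_n(x)]'$, $[Q_n(x)]''$, together with $\Xi(x;n,1)$, $\Xi(x;n,2)$ and their derivatives. The key feature is that the term $[Q_n(x)]''$ enters linearly.

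Next I would substitute both $Q_{n-1}(x)$ and $[Q_{n-1}(x)]'$ into the raising relation $\mathfrak{b}_n^\dag [Q_{n-1}(x)] = \Theta(x;n,2) Q_n(x)$, that is,
\begin{equation*}
[Q_{n-1}(x)]' + \Theta(x;n,1) Q_{n-1}(x) = \Theta(x;n,2) Q_n(x),
\end{equation*}
multiply through by $\Xi(x;n,1)$ to clear the denominator, and collect the coefficients of $[Q_n(x)]''$, $[Q_n(x)]'$, and $Q_n(x)$. The coefficient of $[Q_n(x)]''$ will be $-\Xi(x;n,1)$, and dividing through by this factor gives the monic second order differential equation \eqref{[S2]-2ndODE}. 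A direct comparison should produce $\mathcal{R}(x;n) = \Theta(x;n,1) - \Xi(x;n,2) - [\Xi(x;n,1)]'/\Xi(x;n,1)$ immediately from the $[Q_n(x)]'$ terms, and the stated form of $\mathcal{S}(x;n)$ from the constant terms.

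The main obstacle I anticipate is purely algebraic bookkeeping: one must track the terms coming from $[\Xi(x;n,2)]'$, the logarithmic derivative $[\Xi(x;n,1)]'/\Xi(x;n,1)$, and the coupling term $\Xi(x;n,1)\Theta(x;n,2)$ arising when $\mathfrak{b}_n^\dag$ hits the polynomial factor $\Xi(x;n,1)$ in $\mathfrak{b}_n^\dag[\Xi(x;n,1) Q_{n-1}]$. To match the compact form of $\mathcal{S}(x;n)$ claimed in the theorem, one needs the compatibility identity between the four determinants $\Xi(x;n,1)$, $\Xi(x;n,2)$, $\Theta(x;n,1)$, $\Theta(x;n,2)$ that follows from the shared origin of the ladder pair in Lemmas \ref{[S3]-LEMA-1}, \ref{[S3]-LEMA-4}, \ref{[S3]-LEMA-5}; this identity absorbs the extra $\Xi(x;n,1)\Theta(x;n,2)$ contribution and is the one nontrivial step. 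Once this compatibility is invoked, the rest is routine.
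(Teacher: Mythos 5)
Your plan is precisely the paper's proof: the authors apply $\mathfrak{b}_{n}^{\dag }$ to $\frac{1}{\Xi (x;n,1)}\mathfrak{b}_{n}[Q_{n}(x)]=Q_{n-1}(x)$ and expand, which is exactly your ``solve the lowering relation for $Q_{n-1}$, differentiate, substitute into the raising relation.'' There is no methodological difference, and your reading of the $[Q_{n}]^{\prime }$ terms correctly reproduces $\mathcal{R}(x;n)$.

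The gap is in your final paragraph. Carrying out the elimination honestly, the coefficient of $Q_{n}(x)$ that comes out is
\begin{equation*}
\mathcal{S}(x;n)=\Xi (x;n,2)\left[ \frac{[\Xi (x;n,1)]^{\prime }}{\Xi (x;n,1)}-\Theta (x;n,1)\right] -[\Xi (x;n,2)]^{\prime }+\Xi (x;n,1)\,\Theta (x;n,2),
\end{equation*}
and the product $\Xi (x;n,1)\Theta (x;n,2)$ does not cancel. There is no ``compatibility identity'' among the four determinants that absorbs it: $\Xi (x;n,1)$ and $\Theta (x;n,2)$ are the two off-diagonal ladder coefficients, their product is the exact analogue of the term $\frac{a_{n}}{a_{n-1}}A_{n}(x)A_{n-1}(x)$ appearing in the coefficient of $y$ in Ismail, Th.\ 3.2.3 (the very reference the paper cites), and it is generically nonzero --- if $\Xi (x;n,1)$ vanished identically, the lowering relation would force $[Q_{n}]^{\prime }/Q_{n}$ to equal the rational function $\Xi (x;n,2)$, which is impossible for a degree-$n$ polynomial. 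So you should keep that term: the printed $\mathcal{S}(x;n)$ in the theorem omits it, which is a defect of the statement rather than something a further identity can repair, and positing an unproven identity to force agreement is the one step of your argument that would fail. Everything else is routine bookkeeping and matches the paper.
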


%%%%%%%%%%%%%%%%%%%%%%%%%%%%%%%%%%%%%%%%%%%%%%%%%%%%%%%%%%%%%%%%%%%%%%%%%%%%%%%%%%%%%%%%%%%%%%%%%%

%%%%%%%%%%%%%%%%%%%%%%%%%%%%%%%%%%%%%%%%%%%%%%%%%%%%%%%%%%%%%%%%%%%%%%%%%%%%%%%%%%%%%%%%%%%%%%%%%%

\begin{proof}
The result follows in a straightforward way from the ladder operators
provided in Theorem \ref{[S1]-THEO-1}. The usual technique (see, for example 
\cite[Th. 3.2.3]{Ism05}) consists in applying the raising operator to both
sides of the equation satisfied by the lowering operator, i.e.%
\begin{equation*}
\mathfrak{b}_{n}^{\dag }\left[ \frac{1}{\Xi (x;n,1)}\mathfrak{b}_{n}[Q
_{n}(x)]\right] =\mathfrak{b}_{n}^{\dag }[Q _{n-1}(x)]=\Theta (x;n,2)Q
_{n}(x),
\end{equation*}%
and then using the definition $\mathfrak{b}_{n}^{\dag }$ to compute the left
hand side. After some computations, \eqref{[S2]-2ndODE} follows. 
%\begin{eqnarray*}
%\mathfrak{b}_{n}^{\dag }\left[ \frac{1}{\Xi (z;n,1)}\mathfrak{b}_{n}[\Psi
%_{n}(z)]\right] &=&\frac{\Theta (z;n,1)}{\Xi (z;n,1)}\mathfrak{b}_{n}[\Psi
%_{n}(z)]+D_{z}\left[ \frac{1}{\Xi (z;n,1)}\mathfrak{b}_{n}[\Psi _{n}(z)]%
%\right] \\
%&=&\frac{\Theta (z;n,1)\Xi (z;n,2)}{\Xi (z;n,1)}\Psi _{n}(z)-\frac{\Theta
%(z;n,1)}{\Xi (z;n,1)}[\Psi _{n}(z)]^{\prime } \\
%&&-\frac{[\Xi (z;n,1)]^{\prime }\Xi (z;n,2)}{[\Xi (z;n,1)]^{2}}\Psi _{n}(z)+%
%\frac{[\Xi (z;n,1)]^{\prime }}{[\Xi (z;n,1)]^{2}}[\Psi _{n}(z)]^{\prime } \\
%&&+\frac{[\Xi (z;n,2)]^{\prime }}{\Xi (z;n,1)}\Psi _{n}(z)+\frac{\Xi (z;n,2)%
%}{\Xi (z;n,1)}[\Psi _{n}(z)]^{\prime }-\frac{1}{\Xi (z;n,1)}[\Psi
%_{n}(z)]^{\prime \prime } \\
%&=&\frac{-1}{\Xi (z;n,1)}[\Psi _{n}(z)]^{\prime \prime } \\
%&&+\left[ \frac{[\Xi (z;n,1)]^{\prime }}{[\Xi (z;n,1)]^{2}}+\frac{\Xi (z;n,2)%
%}{\Xi (z;n,1)}-\frac{\Theta (z;n,1)}{\Xi (z;n,1)}\right] [\Psi
%_{n}(z)]^{\prime } \\
%&&+\left[ \frac{\Xi (z;n,2)\Theta (z;n,1)}{\Xi (z;n,1)}-\frac{\Xi
%(z;n,2)[\Xi (z;n,1)]^{\prime }}{[\Xi (z;n,1)]^{2}}+\frac{[\Xi
%(z;n,2)]^{\prime }}{\Xi (z;n,1)}\right] \Psi _{n}(z)
%\end{eqnarray*}
%
%is a second order differential equation for $Q_{n}^{c,N}(x)$. After some
%doable computations, the Theorem easily follows.
\end{proof}

We point out that we have obtained a second order linear differential
equation for the complete sequence $\{Q_{n}(x)\}_{n\geq 0}$. However, as we
have mentioned in the previous sections, the even and odd degree polynomials
behave differently. Indeed, they have another connection formula, and the
previous results hold in either case just by taking the coefficients of the
connection formula \eqref{connect} accordingly. Using Mathematica$%
^{\circledR }$, the expression for $\mathcal{R}(x;n)$ was obtained according
to Theorem \ref{[S2]-THEO-5}. In the sequel, we provide the expressions for
the odd case ($\kappa _{n}^{[0]}=0,\kappa _{2n}^{[1]}=0,r_{2n+1}=1,r_{2n}=0$%
), together with an electrostatic interpretation of the zeros of $%
\{Q_{n}(x)\}_{n\geq 0}$. The even case was analyzed in \cite{AHM-AMC16}. We
found 
\begin{equation*}
\mathcal{R}(x;2n+1)=\frac{2}{x}-4x^{3}-\frac{u^{\prime }(x;2n+1)}{u(x;2n+1)},
\end{equation*}%
where $u(x;2n+1)$ is the biquartic polynomial%
\begin{equation}
u(x;2n+1)=u_{4}(n)\,x^{4}+u_{2}(n)\,x^{2}+u_{0}(n)  \label{ubiquartic}
\end{equation}%
with%
\begin{eqnarray*}
u_{4}(n) &=&16\phi _{2n+1}^{2}(0)[1+\kappa _{2n+1}^{[1]}], \\
u_{2}(n) &=&4\phi _{2n+1}(0)\left[ 4\phi _{2n+1}^{2}(0)+\kappa
_{2n+1}^{[1]}(2+\kappa _{2n+1}^{[1]})(4a_{2n+1}^{2}\phi _{2n+1}(0)-1)\right]
, \\
u_{0}(n) &=&\kappa _{2n+1}^{[1]}\left[ -12\phi _{2n+1}^{2}(0)+\kappa
_{2n+1}^{[1]}\left\{ 1+8a_{2n+1}^{2}\phi _{2n+1}(0)\left[ -1+2\phi
_{2n}(0)\phi _{2n+1}(0)\right] \right\} \right] .
\end{eqnarray*}%
Now, the evaluation of \eqref{[S2]-2ndODE} at the zeros $\{y_{2n+1,k}%
\}_{k=1}^{2n+1}$ of $Q_{2n+1}$ yields 
\begin{equation*}
\frac{\lbrack Q_{2n+1}]^{\prime \prime }(y_{2n+1,k})}{[Q_{2n+1}]^{\prime
}(y_{2n+1,k})}=-\mathcal{R}(y_{2n+1,k};2n+1)=-\frac{2}{y_{2n+1,k}}%
+4(y_{2n+1,k})^{3}+\frac{u^{\prime }(y_{2n+1,k};2n+1)}{u(y_{2n+1,k};2n+1)}.
\end{equation*}%
The above equation represents the electrostatic equilibrium condition for
the zeros $\{y_{2n+1,k}\}_{k=1}^{2n+1}$ of $Q_{2n+1}$ and can be rewritten
as (see \cite{Ism05} ) 
\begin{equation*}
\sum_{j=1,j\neq k}^{2n+1}\frac{1}{y_{2n+1,j}-y_{2n+1,k}}+\frac{u^{\prime
}(y_{2n+1,k};2n+1)}{2u(y_{2n+1,k};2n+1)}-\frac{1}{y_{2n+1,k}}%
+2(y_{2n+1,k})^{3}=0.
\end{equation*}%
Therefore, the zeros of $Q_{2n+1}$ are critical points of the total energy.
Thus, the electrostatic interpretation of the distribution of zeros means
that we have an equilibrium position under the action of the external
potential 
\begin{equation*}
V_{ext}(x,2n+1)=\frac{1}{2}\ln u(x;2n+1)-\frac{1}{2}\ln x^{2}e^{-x^{4}},
\end{equation*}%
where the first term represents a short range potential which corresponds to
unit charges located at the four zeros of $u(x;2n+1)$, and the second term
is a long range potential associated with a Christoffel perturbation of the
Freud weight function.

If $z_{+}(n)$ and $z_{-}(n)$ are the solutions of the associated quadratic
equation%
\begin{equation}
u_{4}(n)\,z^{2}+u_{2}(n)\,z+u_{0}(n)=0,  \label{quadratic}
\end{equation}%
then the zeros of (\ref{ubiquartic}) are%
\begin{equation*}
x_{1}(n)=+\sqrt{z_{+}}(n),\text{ }x_{2}(n)=-\sqrt{z_{+}}(n),\text{ }%
x_{3}(n)=+\sqrt{z_{-}}(n),\text{ }x_{4}(n)=-\sqrt{z_{-}}(n).
\end{equation*}

%%%%%%%%%%%%%%%%%%%%%%%%%%%%%%%%%%%%%%%%%%%%%%%%%%%%%%%%%%%%%%%%%%%%%%%%%%%%%%%%%%%%%%%%%%%%%%%%%%

%%%%%%%%%%%%%%%%%%%%%%%%%%%%%%%%%%%%%%%%%%%%%%%%%%%%%%%%%%%%%%%%%%%%%%%%%%%%%%%%%%%%%%%%%%%%%%%%%%

\begin{table}[!ht]
\centering\renewcommand{\arraystretch}{1.2} 
\begin{tabular}{lrrrrrrrr}
\hline
& \multicolumn{2}{c}{$M=0.1$} &  & \multicolumn{2}{c}{$M=1$} &  & 
\multicolumn{2}{c}{$M=10$} \\ \cline{2-3}\cline{5-6}\cline{8-9}
& $\pm \sqrt{z_{1}}$ & $\pm \sqrt{z_{2}}$ &  & $\pm \sqrt{z_{1}}$ & $\pm 
\sqrt{z_{2}}$ &  & $\pm \sqrt{z_{1}}$ & $\pm \sqrt{z_{2}}$ \\ \hline
$n=1$ & $\pm \,0.369164$ & $\pm \,0.878731\,i$ &  & $\pm \,0.745497$ & $\pm
\,0.914759\,i$ &  & $\pm \,0.905303$ & $\pm \,0.928589\,i$ \\ 
$n=3$ & $\pm \,0.397067$ & $\pm \,1.059517\,i$ &  & $\pm \,0.387740$ & $\pm
\,1.089036\,i$ &  & $\pm \,0.159258$ & $\pm \,1.106825\,i$ \\ 
$n=5$ & $\pm \,0.329766$ & $\pm \,1.181451\,i$ &  & $\pm \,0.197206$ & $\pm
\,1.197172\,i$ &  & $\pm \,0.068685$ & $\pm \,1.201241\,i$ \\ 
$n=7$ & $\pm \,0.251172$ & $\pm \,1.272375\,i$ &  & $\pm \,0.116257$ & $\pm
\,1.279623\,i$ &  & $\pm \,0.038576$ & $\pm \,1.280856\,i$ \\ 
$n=9$ & $\pm \,0.189032$ & $\pm \,1.345977\,i$ &  & $\pm \,0.076318$ & $\pm
\,1.349456\,i$ &  & $\pm \,0.024825$ & $\pm \,1.349937\,i$ \\ 
$n=11$ & $\pm \,0.144418$ & $\pm \,1.408813\,i$ &  & $\pm \,0.053943$ & $\pm
\,1.410616\,i$ &  & $\pm \,0.017374$ & $\pm \,1.410839\,i$ \\ 
$n=13$ & $\pm \,0.112816$ & $\pm \,1.464184\,i$ &  & $\pm \,0.040222$ & $\pm
\,1.465192\,i$ &  & $\pm \,0.012969$ & $\pm \,1.465308\,i$ \\ 
$n=15$ & $\pm \,0.089745$ & $\pm \,1.513969\,i$ &  & $\pm \,0.029902$ & $\pm
\,1.514571\,i$ &  & $\pm \,0.004691$ & $\pm \,1.514637\,i$ \\ 
$n=17$ & $\pm \,0.073204$ & $\pm \,1.559345\,i$ &  & $\pm \,0.024134$ & $\pm
\,1.559723\,i$ &  & $\pm \,0.005169$ & $\pm \,1.559764\,i$ \\ 
$n=19$ & $\pm \,0.060787$ & $\pm \,1.601140\,i$ &  & $\pm \,0.019950$ & $\pm
\,1.601389\,i$ &  & $\pm \,0.005144$ & $\pm \,1.601416\,i$ \\ 
\lasthline & \multicolumn{1}{l}{} &  &  &  &  &  &  & 
\end{tabular}%
\caption{Zeros of $u(x;2n+1)$ for several values of $M_{1}$ and odd values
of $n$.}
\label{Tabla3}
\end{table}

%%%%%%%%%%%%%%%%%%%%%%%%%%%%%%%%%%%%%%%%%%%%%%%%%%%%%%%%%%%%%%%%%%%%%%%%%%%%%%%%%%%%%%%%%%%%%%%%%%

%%%%%%%%%%%%%%%%%%%%%%%%%%%%%%%%%%%%%%%%%%%%%%%%%%%%%%%%%%%%%%%%%%%%%%%%%%%%%%%%%%%%%%%%%%%%%%%%%%

\begin{figure}[!ht]
\centerline{\includegraphics[width=10cm,keepaspectratio]{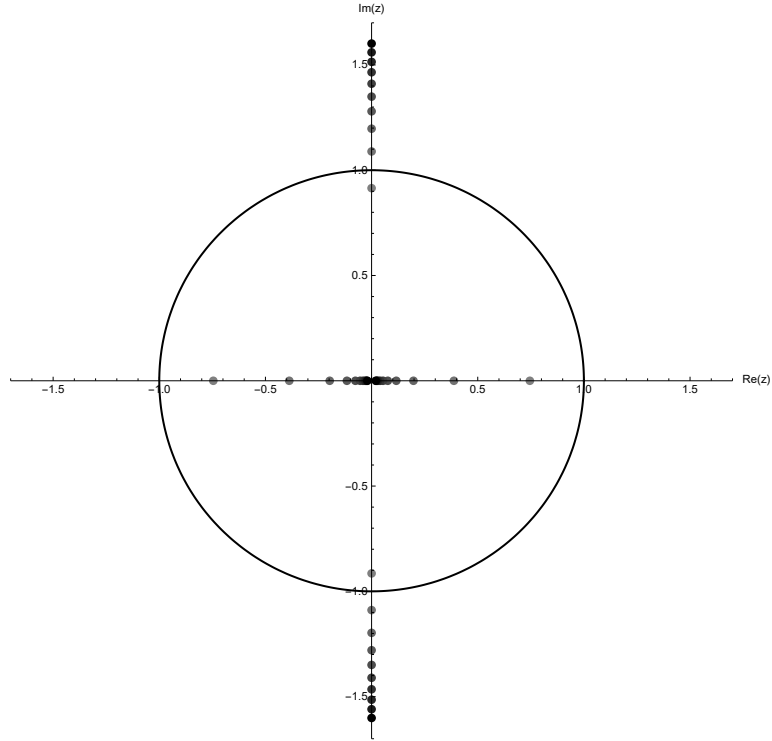}}
\caption{Zeros of $u(x;2n+1)$ for $M_{1}=1$ and odd values of $n$, from $1$
to $19$.}
\label{Grafica2.eps}
\end{figure}

%%%%%%%%%%%%%%%%%%%%%%%%%%%%%%%%%%%%%%%%%%%%%%%%%%%%%%%%%%%%%%%%%%%%%%%%%%%%%%%%%%%%%%%%%%%%%%%%%%

%%%%%%%%%%%%%%%%%%%%%%%%%%%%%%%%%%%%%%%%%%%%%%%%%%%%%%%%%%%%%%%%%%%%%%%%%%%%%%%%%%%%%%%%%%%%%%%%%%

Table \ref{Tabla3} shows the zeros of $u(x;2n+1)$ for some fixed values of $%
M_{1}$ and several values of $n$. With just a little more effort, we can
describe the asymptotic behavior with $n$ of these four roots. From Lemma %
\ref{[Sec2]-LEMMA2}, and (\ref{[Sec1]-LewQuarles}), after some tedious but
straightforward computations, the asymptotic behavior of the three
coefficients is%
\begin{eqnarray*}
u_{4}(n) &=&\frac{32}{3}n\left( 1+\frac{15}{8n}+\frac{155}{128n^{2}}+%
\mathcal{O}(n^{-3})\right) , \\
u_{2}(n) &=&8\sqrt{6}n^{1/2}\left( 1+\frac{4n}{9}+\frac{287}{432n}+\mathcal{O%
}(n^{-2})\right) , \\
u_{0}(n) &=&\frac{-9}{2}\left( 1+\frac{9}{8n}+\mathcal{O}(n^{-2})\right).
\end{eqnarray*}%
Then, the asymptotic behavior of the aforementioned $z_{+}$ and $z_{-}$ is%
\begin{eqnarray*}
z_{+}(n) &=&\frac{27}{64}\sqrt{\frac{3}{2}}\frac{1}{n^{3/2}}-\frac{243}{512}%
\sqrt{\frac{3}{2}}\frac{1}{n^{5/2}}+\mathcal{O}(n^{-7/2}), \\
z_{-}(n) &=&-\sqrt{\frac{2}{3}}\frac{1}{n^{1/2}}-\frac{1}{4}\sqrt{\frac{3}{2}%
}\frac{1}{n^{5/2}}+\mathcal{O}(n^{-7/2})
\end{eqnarray*}%
The above shows that, as $n$ goes to infinity, $z_{+}(n)$ remains positive and $z_{-}(n)$ negative, so $u(x;2n+1)$ will always have two symmetric real zeros $x_{1},x_{2}=\pm \sqrt{z_{+}}(n)$ , and two extra simple conjugate pure
imaginary zeros $x_{3},x_{4}=\pm \sqrt{z_{-}}(n)$.

%%%%%%%%%%%%%%%%%%%%%%%%%%%%%%%%%%%%%%%%%%%%%%%%%%%%%%%%%%%%%%%%%%%%%%%%%%%%%%%%%%%%%%%%%%%%%%%%%%

%%%%%%%%%%%%%%%%%%%%%%%%%%%%%%%%%%%%%%%%%%%%%%%%%%%%%%%%%%%%%%%%%%%%%%%%%%%%%%%%%%%%%%%%%%%%%%%%%%

%\section*{Acknowledgements}

%%%%%%%%%%%%%%%%%%%%%%%%%%%%%%%%%%%%%%%%%%%%%%%%%%%%%%%%%%%%%%%%%%%%%%%%%%%%%%%%%%%%%%%%%%%%%%%%%%

%%%%%%%%%%%%%%%%%%%%%%%%%%%%%%%%%%%%%%%%%%%%%%%%%%%%%%%%%%%%%%%%%%%%%%%%%%%%%%%%%%%%%%%%%%%%%%%%%%

%%%%%%%%%%%%%%%%%%%%%%%%%%%%%%%%%%%%%%%%%%%%%%%%%%%%%%%%%%%%%%%%%%%%%%%%%%%%%%%%%%%%%%%%%%%%%%%%%%

%%%%%%%%%%%%%%%%%%%%%%%%%%%%%%%%%%%%%%%%%%%%%%%%%%%%%%%%%%%%%%%%%%%%%%%%%%%%%%%%%%%%%%%%%%%%%%%%%%

\end{document}